\def\Vol{\mathrm{Vol}}
\def\tr{\mathrm{tr}}
\def\pr{\mathbb{P}}
\def\ddbar{\sqrt{-1}\partial\bar{\partial}}
\def\Id{\mathrm{Id}}
\newtheorem{proposition}{Proposition}[section]
\newtheorem{lemma}[proposition]{Lemma}
\newtheorem{theorem}[proposition]{Theorem}
\newtheorem{newTheorem}{Theorem}
\newtheorem{corollary}[proposition]{Corollary}
\newtheorem{newcorollary}{Corollary}
\theoremstyle{definition}
\newtheorem{remark}[proposition]{Remark}
\newtheorem{definition}[proposition]{Definition}
\def\C{\mathrm{C}}
\def\Vol{\mathrm{Vol}}
\def\ddbar{\sqrt{-1}\partial\bar{\partial}}
\newcommand{\comp}{\mathbb{C}}
\newcommand{\scK}{\mathcal{K}}
\newcommand{\scL}{\mathcal{L}}
\newcommand{\scI}{\mathcal{I}}
\newcommand{\scX}{\mathcal{X}}
\newcommand{\scO}{\mathcal{O}}
\newcommand{\scB}{\mathcal{B}}
\newcommand{\scY}{\mathcal{Y}}
\DeclareMathOperator{\Bl}{Bl}
\DeclareMathOperator{\Grass}{Grass}
\DeclareMathOperator{\DF}{DF}
\author[Y. Hashimoto and J. Keller]{Yoshinori Hashimoto and Julien Keller}
\dedicatory{In honor of Ngaiming Mok's 60th birthday}
\title[About J-flow, J-balanced metrics, uniform J and K-stability]{About J-flow, J-balanced metrics, uniform J-stability and K-stability}
\keywords{J-flow, balanced metrics, uniform K-stability, J-stability, constant scalar curvature K\"ahler metrics}
\begin{document}
\bibliographystyle{alpha}

\begin{abstract}
From the work of Dervan-Keller \cite{DK}, there exists a quantization of the critical equation for the J-flow. This leads to the notion of J-balanced metrics. 
We prove that the existence of J-balanced metrics has a purely algebro-geometric characterization in terms of Chow stability, complementing the result of Dervan-Keller.  We also obtain various criteria that imply uniform J-stability and uniform K-stability, strengthening the results of Dervan-Keller. Eventually, we discuss the case of K\"ahler classes that may not be integral over a compact manifold.
\end{abstract}

\maketitle

\tableofcontents

\section{Introduction \label{intro}}

Let $M$ be a smooth projective manifold of complex dimension  $n\geq 2$. Given two K\"ahler forms $\omega$ and $\chi$, a priori in different classes, S.K. Donaldson introduced a flow of K\"ahler metrics called the J-flow in \cite{D9}. This flow, which has long time existence, is given by the following parabolic PDE in the $\omega$-potentials $\phi_t$: 
\begin{equation}\label{Jflow}
\frac{\partial \phi_t}{\partial t}= \gamma- \frac{\chi \wedge (\omega+\ddbar \phi_t)^{n-1}}{(\omega+\ddbar\phi_t)^n},
\end{equation} 
where $\gamma$ is a topological constant that can be computed by integration. Moreover, a \textit{critical metric} for Donaldson's J-flow is a solution at time $+\infty$ of the flow, i.e. a K\"ahler metric $\omega+\ddbar \phi$ solution of the elliptic equation
\begin{equation}\chi \wedge (\omega+\ddbar \phi)^{n-1} =\gamma (\omega+ \ddbar \phi)^n.\label{critical}
\end{equation}

A solution to the critical equation \eqref{critical} may not exist (see e.g.~\cite[Section 4.3]{D9}). There are several motivations for studying this flow and the existence of critical metrics. \\
Originally, Donaldson discovered that the critical equation corresponds to a geometric problem, namely to find a zero of a certain infinite dimensional moment map. \\
There is also a motivation in relationship with detecting the existence of constant scalar curvature K\"ahler metrics (CSCK in short). A key conjecture of T. Mabuchi and G. Tian asserts that the properness of the Mabuchi K-energy functional for the class $[\omega]$ should be equivalent to the existence of a CSCK metric in the class $[\omega]$, see \cite{Ma1,Ti2} (a small modification of the conjecture is
needed if the manifold admits holomorphic vector fields). Now, X.X Chen \cite{C2} (together with a result of J.Song and B. Weinkove) noticed that if $[\chi]$ is the canonical class, the existence of a critical metric in $[\omega]$ implies the properness of the Mabuchi functional for $[\omega]$. This relies on a decomposition formula of the Mabuchi functional. Therefore, if Mabuchi-Tian conjecture holds and $K_M$ is ample, the existence of a critical metric in $[\omega]$ should imply the existence of a CSCK metric in the same class. \\
Another motivation for studying the J-flow in connection to the CSCK metrics is the continuity method proposed by X.X.~Chen \cite{C3}, since it is known that the critical metric for the J-flow serves as the starting point of Chen's continuity method (cf.~\cite{Zeng} for the case when $\omega = \chi$ and \cite{YHtwisted} for the general case).\\

Consequently, the J-flow has attracted a great deal of attention in order to determine under which conditions it converges. For instance, in 2000, X.X. Chen \cite{C2} proved that if holomorphic bisectional curvature of $\chi$ is non-negative, there is convergence of the J-flow. But, thanks to the uniformization theorem of N. Mok \cite{Mok1}, these  manifolds are well-known and the decomposition formula of the Mabuchi functional does not hold, so we don't get any new information about its properness.\\
Later, several papers have provided criteria for existence of critical metrics, see \cite{C1,We2,We1,SW,FLSW}. From the point of view of PDE and geometric analysis, a necessary and sufficient condition for their existence was found in terms of positivity of certain differential forms by J. Song and B. Weinkove in \cite{SW}; see also the refinement \cite{Li-Shi-Yao2}. However, this analytic criteria is difficult to test in practice and it is expected that one can detect the existence of critical metrics by considering only an algebro-geometric condition. In that direction, M. Lejmi and G. Sz\'ekelyhidi \cite{L-Sz} provided a candidate for the algebro-geometric condition; later it was proved that their guess is correct in dimension 2 and for toric manifolds by T. Collins and G. Sz\'ekelyhidi \cite{Sz-Co}.  Nevertheless, in higher dimension it is very likely that the algebro-geometric condition of Lejmi-Sz\'ekelyhidi will be also hard to check in practice. It is natural to wonder if there are simple numerical criterion (in terms of Chern inequalities) for existence of critical metrics (cf.~\cite[Conjecture 1]{L-Sz}). Morally, over a general type manifold, this  would provide ``a neighborhood'' of the canonical class where there are classes endowed with CSCK metrics. The works \cite{We2,We1,SW,Panov-Ross,FLSW,SW2} have provided such conditions, probably close to be sharp for surfaces of general type,  but it turns out from \cite{DK} that these conditions are probably not optimal in higher dimension. In \cite{DK}, instead of searching a criterion for properness, it is obtained results in the direction of K-stability in the perspective of the Yau-Tian-Donaldson conjecture. The Yau-Tian-Donaldson conjecture asserts that the polarized manifold $(M,L)$ is K-stable if and only if there exists a CSCK metric in the class $c_1(L)$. Moreover \cite{DK} present provide a quantization of the critical equation \eqref{critical}, giving a canonical algebraic sequence of metrics that converges towards the critical metric. These algebraic metrics are called \textit{J-balanced}.\\

In this note, we present some results that complement the work of \cite{DK}. We prove that the existence of a J-balanced metric is equivalent to a GIT notion of stability in terms of the Chow scheme (Theorem \ref{balancedimpliesstable}). We also improve the results of \cite{DK} by considering \textit{uniform K-stability} instead of the classical notion of K-stability (Section \ref{jflowsection}). Actually, there are some reasons to expect that K-stability should be strengthened in order to obtain the Yau-Tian-Donaldson conjecture (see \cite{ACGT2,Sz2015}). A link between uniform K-stability and properness of the Mabuchi energy has been found explicitly in the non-Archimedean approach by S. Boucksom, T. Hisamoto and M. Jonsson \cite{BHJ}. Also, our motivation  is justified by the fact that existence of CSCK metric implies uniform K-stability, see \cite{BDL16}.\\ Eventually, the last part of the paper is dedicated to the study of uniform K-stability for non integral K\"ahler class over a compact K\"ahler manifold (Section \ref{nonintegralsect}).

\bigskip

{\small
\noindent {\bf Acknowledgments.} {\small Both authors would like to thank R. Dervan for very helpful conversations. They are also particularly grateful to the anonymous referee. \\
The work of both authors has been carried out in the framework of the Labex Archim\`ede (ANR-11-LABX-0033) and of the A*MIDEX project (ANR-11-IDEX-0001-02), funded by the ``Investissements d'Avenir" French Government programme managed by the French National Research Agency (ANR). The second author was also partially supported by supported by the ANR project EMARKS, decision No ANR-14-CE25-0010. 
}}

\section{J-balanced metrics \label{sect11}}

In this section we overview some results of \cite{DK} and introduce the notion of J-balanced metrics. This section provides a geometric quantization of the critical equation in the projective case. Consider now two ample line bundles $L_1,L_2$ on $M$. Fix hermitian metrics $h_1\in {\rm Met}(L_1)$, $h_2\in {\rm Met}(L_2)$ such that the curvatures $\omega = c_1(h_1)$, and $\chi = c_1(h_2)$ are both K\"ahler forms.  One can quantise metrics on $L_1$ using metrics on the finite dimensional vector spaces $H^0(M,L_1^k)$ with quantum parameter $k$, called \textit{Bergman metrics}. In \cite{DK}, it is introduced a flow on the space of Bergman metrics, which we call the J-balancing flow. Critical points of the J-balancing flow are called J-balanced metrics, and these fit naturally into a finite dimensional moment map picture. We provide now the details of the results.

\bigskip

Let us denote in the sequel $N=N_k=\dim H^0(L_1^k)-1$. We introduce a moment map setting in finite dimensions. Let us consider first 
$\mu_{FS} : \mathbb{P}^{N} \rightarrow \sqrt{-1}Lie(U(N+1))$ which is a moment map for the $U(N+1)$ action and the Fubini-Study metric $\omega_{FS}$ on $\mathbb{P}^{N}$. Given homogeneous unitary coordinates, one sets explicitly  $\mu_{FS}=(\mu_{FS})_{\alpha,\beta}$ as
\begin{equation}
\left(\mu_{FS}([z_0,...,z_N])\right)_{\alpha,\beta}=\frac{z_\alpha \bar{z}_\beta}{\sum_i |z_i|^2}. \label{mu}
\end{equation}
Then,  given an holomorphic embedding $\iota:M\hookrightarrow  \mathbb{P}H^0(L_1^k)^*$, and the Fubini-Study form $\omega_{FS}$ on the projective space, define
\begin{equation}\mu_{k,\chi}(\iota)=\frac{1}{\gamma}\int_M \mu_{FS}(\iota(p)) \;{\chi\wedge \iota^*(\omega_{FS}^{n-1})} (p).\label{Jmomentmap} \end{equation}
The map $\mu_{k,\chi}$ is an integral over $M$ of the  moment map for the $U(N+1)$ action over the space of all bases of $H^0(L_1^k)$, see \cite{DK}.
Note that if one defines a hermitian metric $H$ on $H^0(L_1^k)$, one can consider an orthonormal basis with respect to $H$ and the associated embedding, and thus (by abuse of notation) it also makes sense to speak of $\mu_{k,\chi}(H)$. In the {Bergman space} of metrics $\mathcal{B}=GL(N+1)/U(N+1),$ we have a preferred metric associated to an embedding $\iota : M \hookrightarrow \mathbb{P}H^0 (L_1^k)^*$ and this is precisely a J-balanced metric.

\begin{definition}[J-balanced embedding, J-balanced metric]\label{mu0kchi}
The embedding $\iota$ is J-balanced if and only if $$\mu^0_{k,\chi}(\iota):=\mu_{k,\chi}(\iota) - \frac{\tr(\mu_{k,\chi}(\iota))}{N+1}\Id_{N+1}=0.$$
\end{definition}
\noindent A J-balanced  embedding corresponds (up to $SU(N+1)$-isometries) to a J-balanced metric $\iota^*\omega_{FS}$ by pull-back of the Fubini-Study metric from $\mathbb{P}H^0(L^k_1)^*$. Note that for $H\in {\rm Met}(H^0(L_1^k))$,  it also makes sense to consider $\mu_{k,\chi}(h)$ where $h=FS(H)\in {\rm Met}(L_1^k)$, i.e when $h$ belongs to the space of  \textit{Bergman} type fibrewise metric that we identify with $\mathcal{B}$.

\medskip 

\par On the other hand, seen as a hermitian matrix, $\mu^0_{k,\chi}(\iota)$ induces a vector field on $\mathbb{P}^{N}$. The balancing flow is defined by
\begin{equation*}
\frac{d \iota(t)}{dt} =  -  \mu^0_{k,\chi}(\iota(t)),  
\end{equation*}
To fix the starting point of this flow, we choose a K\"ahler metric $\omega=\omega(0)$ and we construct a sequence of hermitian metrics $h_k(0)$ such that $\omega_k(0):=c_1(h_k(0))$ converges smoothly to $\omega(0)$ providing a sequence of embeddings $\iota_k(0)$ for $k\gg 0$. For technical reasons, we decide to rescale this flow by 
considering the following ODE,
\begin{equation}
\frac{d \iota_k(t)}{dt} =  - k \mu^0_{k,\chi}(\iota_k(t)),  \label{resbalflowJ}
\end{equation}
which we call the \textit{rescaled J-balancing flow}. This flow induces a sequence of K\"ahler metrics 
$$\omega_{k}(t)=\frac{1}{k}\iota_k(t)^*(\omega_{FS}),$$ when $t$ and $k$ tends to infinity. The behavior of the sequence $\omega_{k}(t)$ is fully described in the next result.

\begin{theorem}[\cite{DK}]\label{thm1} Fix $T>0$ and let $\omega_k(t)$ be the solution of the J-balancing flow, for $t\in  [0,T]$. Then as $k\to\infty$, the sequence $\omega_k(t)$ converges in $C^{\infty}$ to the solution of the J-flow as $k\to\infty$. Furthermore, the convergence is $\C^1$ in the variable $t$. \\
Assuming there is a critical point of the J-flow, the convergence holds for all $t>0$. \end{theorem}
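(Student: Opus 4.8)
The plan is to follow the strategy Fine introduced for the Calabi flow and projective embeddings, adapted to the twisted operator governing the J-flow: compare the rescaled J-balancing flow \eqref{resbalflowJ} with a one-parameter family of ``approximate solutions'' built from the genuine J-flow $\phi_t$, and close the comparison by a Gronwall estimate exploiting the ellipticity and the sign of the linearised J-operator. Throughout, a Bergman metric $h_k=FS(H_k)$ on $L_1^k$ with curvature $k\omega_k$ is encoded by an $H_k$-orthonormal basis $\{s_\alpha\}$ of $H^0(L_1^k)$, the associated embedding $\iota_k$, and the Bergman density $\rho_k=\sum_\alpha|s_\alpha|^2_{h_k}$, with $\iota_k^*\omega_{FS}=k\omega_k$; since $-k\mu^0_{k,\chi}$ is a gradient-type vector field on $\scB=GL(N+1)/U(N+1)$, the flow \eqref{resbalflowJ} exists for all $t$.

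The analytic heart is an asymptotic expansion of the \emph{twisted} moment map $\mu_{k,\chi}$. Writing $(\mu_{FS}(\iota_k(p)))_{\alpha\beta}=s_\alpha(p)\overline{s_\beta(p)}/\rho_k(p)$, one has
\begin{equation*}
(\mu_{k,\chi}(H_k))_{\alpha\beta}=\frac{1}{\gamma}\int_M \frac{s_\alpha\,\overline{s_\beta}}{\rho_k}\;\chi\wedge(k\omega_k)^{n-1},
\end{equation*}
so that, inserting the Tian--Yau--Zelditch--Catlin expansion of $\rho_k$ together with its near-diagonal refinements, the identity to be established is
\begin{equation*}
k\,\mu^0_{k,\chi}(H_k)=\Pi_{H_k}\!\Big(\gamma-\frac{\chi\wedge\omega_k^{n-1}}{\omega_k^n}\Big)+O(k^{-1}),
\end{equation*}
the error being controlled in operator norm uniformly over $C^\infty$-bounded families of Bergman metrics; here $\Pi_{H_k}$ is the Hilbert (Rawnsley) map sending a multiplication operator to its $L^2(H_k)$-projection onto $\End H^0(L_1^k)$. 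The subtlety is that the twisted volume $\chi\wedge\iota_k^*\omega_{FS}^{n-1}$ replaces $\iota_k^*\omega_{FS}^{n}$, so one needs the asymptotics of the Bergman kernel paired against $\chi\wedge\omega_k^{n-1}$, and this is precisely where the J-flow velocity $\gamma-\Lambda_\chi\omega_k$ appears. With this in hand, Donaldson's construction of approximate solutions --- killing successive coefficients in the $1/k$-expansion, now in a family depending smoothly on $t\in[0,T]$ --- produces metrics $H_k(t)$ whose induced metrics $\tfrac1k\iota_{H_k(t)}^*\omega_{FS}$ converge to $\omega_{\phi_t}$ in $C^\infty$ uniformly on $[0,T]$ and which solve \eqref{resbalflowJ} up to an $O(k^{-1})$ error.

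Next, write the true solution of \eqref{resbalflowJ} as $H_k(t)$ perturbed by $E_k(t)$. Then $E_k$ obeys an ODE $\dot E_k=\mathcal L_k(E_k)+O(k^{-1})+Q_k(E_k)$, in which $\mathcal L_k$ is the linearisation of $-k\mu^0_{k,\chi}$, whose leading part is the negative elliptic operator arising from the linearisation of the J-operator (of Laplacian type, with symbol built from $\chi$ and $\omega$), and $Q_k$ is quadratically small. A Gronwall argument on the fixed interval $[0,T]$ gives $\|E_k(t)\|=O(k^{-1})$ in a low Sobolev norm; bootstrapping against the elliptic estimates for that operator promotes the bound to every Sobolev norm, hence to $C^\infty$ convergence $\omega_k(t)\to\omega_{\phi_t}$ by Sobolev embedding, uniformly on $[0,T]$. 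Differentiating both ODEs once in $t$ and rerunning the comparison yields the $C^1$-in-$t$ assertion.

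Finally, suppose the J-flow admits a critical metric $\omega_\infty$. Then the J-flow converges to $\omega_\infty$ exponentially as $t\to\infty$ (the positivity condition of Song--Weinkove being forced by the existence of $\omega_\infty$), the linearised J-operator at $\omega_\infty$ has a spectral gap that survives quantisation uniformly in $k$, and the J-balanced metrics $\omega_k^{\mathrm{bal}}$ exist for $k\gg0$ and converge to $\omega_\infty$. Picking $T$ so that $\omega_{\phi_T}$ lies in a neighbourhood of $\omega_\infty$ on which the rescaled balancing flow acts as a uniform contraction towards $\omega_k^{\mathrm{bal}}$, the estimate above places $\omega_k(T)$ in that neighbourhood for $k\gg0$, and the contraction propagates the convergence to all $t\ge T$, hence to all $t>0$. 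I expect the main obstacle to be the pair of Steps above: establishing the $1/k$-expansion of the twisted moment map $\mu_{k,\chi}$ with error bounds uniform over a $C^\infty$-bounded family of Bergman metrics, and propagating such uniform control along the flow on $[0,T]$; the uniform-in-$k$ spectral gap needed to glue the finite-time and long-time regimes in the last step is a secondary but genuine difficulty.
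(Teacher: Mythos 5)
This theorem is quoted in the paper from \cite{DK} without an in-paper proof, and your sketch reproduces essentially the strategy of the original argument there, which follows Fine's treatment of the Calabi/balancing flow: a twisted Bergman-kernel expansion identifying $k\mu^0_{k,\chi}$ with the quantised J-flow velocity $\gamma-\frac{\chi\wedge\omega_k^{n-1}}{\omega_k^n}$, approximate solutions built from the J-flow in a smooth family in $t$, a Gronwall-type comparison on $[0,T]$ with $k$-uniform constants, and, given a critical metric, exponential convergence of the J-flow together with a contraction towards the J-balanced metric for $k\gg 0$. The two points you flag as the main obstacles (uniform control of the twisted expansion over $C^\infty$-bounded families of Bergman metrics, and the $k$-uniform Lipschitz/spectral-gap bounds needed for the Gronwall and gluing steps) are precisely the technical content supplied in \cite{DK} via Donaldson--Fine-type projective estimates, so your plan is the right one but those estimates constitute the actual proof rather than a finishing touch.
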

Building on earlier works of Y. Sano \cite{Sa} and R. Seyyedali \cite{Sey2}, this approach also provides a iterative scheme to compute numerically J-balanced metrics.
As J-balanced metrics are unique, one can recover the uniqueness of the critical metrics as was proved in \cite{C1}. We have also the following consequence of the previous result and of the long time existence of the J-flow.

\begin{corollary}[\cite{DK}]\label{cor1}
Consider $(M,L_1,L_2)$ a polarized manifold by $L_1,L_2$ such that that there exists a critical metric
solution of \eqref{critical}. Then for $k$ sufficiently large, there exists a sequence of J-balanced metrics on 
${\rm Met}(L_1^k)$ obtained as the limit of the balancing flow at time $t=+\infty$. Furthermore, the sequence of J-balanced metrics converges in smooth topology towards the critical metric when $k\rightarrow +\infty$. 
\end{corollary}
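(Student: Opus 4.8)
The plan is to deduce this from Theorem~\ref{thm1}, the large--time convergence of the J-flow, and a semiclassical convexity estimate for the quantised problem. First I would record the input from the continuous side: since \eqref{critical} is solvable, the J-flow \eqref{Jflow} with $\omega(0)=\omega$ exists for all time and converges in $C^\infty$, as $t\to\infty$, to a critical metric $\omega_\infty$ (this is the convergence theory recalled in Section~\ref{intro}). Hence, for each $m$ and each $\epsilon>0$, there is $T=T(\epsilon,m)$ with $\|\omega(T)-\omega_\infty\|_{C^m}<\epsilon$. Applying Theorem~\ref{thm1} with this fixed $T$, the solution $\omega_k(t)$ of the rescaled J-balancing flow \eqref{resbalflowJ} converges, as $k\to\infty$, in $C^\infty$ on $[0,T]$ to $\omega(t)$; so for $k\ge k_0(\epsilon,m)$ one has $\|\omega_k(T)-\omega_\infty\|_{C^m}<2\epsilon$. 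Moreover, the existence of a critical point guarantees, again by Theorem~\ref{thm1}, that $\omega_k(t)$ is defined for all $t>0$, so asking for its time--infinity limit is meaningful.

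The core step is the convergence of the balancing flow for fixed large $k$. I would use that \eqref{resbalflowJ} is the gradient flow, on the finite--dimensional symmetric space $\scB=GL(N_k+1)/U(N_k+1)$, of a geodesically convex, real--analytic, Kempf--Ness type functional $\scZ_k$ attached to the moment map $\mu^0_{k,\chi}$, whose critical points are exactly the J-balanced metrics (the moment--map picture of Section~\ref{sect11}). The crucial ingredient is a \emph{uniform strict convexity near $\omega_\infty$}: the Hessian of $\scZ_k$, equivalently the linearisation $-D\mu^0_{k,\chi}$, is bounded below by a positive constant independent of $k$ on a neighbourhood of fixed size (in a $k$--adapted metric) of the Bergman embedding associated with $\omega_\infty$. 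This is the quantisation, in the spirit of \cite{Sa,Sey2,DK}, of the fact that the linearisation of \eqref{critical} at $\omega_\infty$ is a self--adjoint elliptic operator with vanishing zeroth--order term, hence has a strictly positive first non-zero eigenvalue; the Bergman--quantised operators converge, after rescaling, to this gapped limit, so they inherit a uniform spectral gap for $k\gg0$. Granting this, once $\iota_k(T)$ lies in that neighbourhood — which holds for $k\ge k_0$ after enlarging $T$ and $k_0$ — the flow stays there and converges, exponentially in $t$ and uniformly in $k$, to the unique J-balanced embedding $\iota_k(\infty)$; equivalently there is a J-balanced metric $\omega_k(\infty)=\tfrac1k\,\iota_k(\infty)^*\omega_{FS}$, realised as the $t\to+\infty$ limit of \eqref{resbalflowJ}.

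Finally, to obtain $\omega_k(\infty)\to\omega_\infty$ in $C^\infty$, I would interchange the limits $t\to\infty$ and $k\to\infty$. Because the exponential convergence of the balancing flow in the previous step is uniform in $k$, the bound $\|\omega_k(T)-\omega_\infty\|_{C^m}<2\epsilon$ propagates to $t=\infty$; letting $T\to\infty$ and $\epsilon\to0$ (with $k_0\to\infty$) then gives the claimed smooth convergence of the J-balanced metrics to the critical metric.

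The hard part will be the uniform convexity estimate of the second step. Theorem~\ref{thm1} only controls the \emph{metrics} $\omega_k(t)$ on compact time intervals, whereas the argument needs control of the \emph{linearised} balancing--flow operators up to $t=+\infty$, uniformly in $k$; this requires the semiclassical convergence of the Bergman--quantised operators to the gapped linearisation of \eqref{critical} at $\omega_\infty$, together with the exponential stability it produces — precisely the analytic content underlying the quantisation results of \cite{Sa,Sey2,DK}.
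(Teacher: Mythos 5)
Your outline does follow the route that the cited proof in \cite{DK} (modelled on Donaldson's and Fine's quantization arguments) takes --- note that the present paper does not reprove this corollary, it only records it as a consequence of Theorem \ref{thm1} together with the long-time behaviour of the J-flow. The genuine gap is your second step. The ``uniform strict convexity near $\omega_\infty$'' --- a lower bound on the linearisation of $\mu^0_{k,\chi}$ (equivalently on the Hessian of the Kempf--Ness functional $I_{\mu^0_{k,\chi}}$) that is \emph{independent of $k$} on a fixed-size neighbourhood --- is asserted, not proven, and it is not contained in anything you invoke: Theorem \ref{thm1} only gives $C^\infty$ approximation of the J-flow by $\omega_k(t)$ on compact time intervals, and says nothing about the spectrum of the linearised finite-dimensional flow near $t=\infty$. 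Moreover, as stated the uniformity is stronger than what the known quantization arguments deliver: in the Donaldson--Fine--Dervan--Keller scheme the lower bound on the derivative of the moment map degenerates polynomially in $k$, and the argument only closes because one first constructs, from the critical metric $\omega_\infty$ via Bergman-kernel asymptotics of the integral \eqref{Jmomentmap}, embeddings that are approximately J-balanced to order $O(k^{-q})$ for every $q$; the quantitative moment-map lemma then produces a zero of $\mu^0_{k,\chi}$ within a distance $\delta_k/\lambda_k \to 0$, reached by the flow. Your proposal supplies neither the $O(k^{-q})$ smallness of the moment map at the starting point nor any quantified eigenvalue bound, so the claimed convergence of the rescaled J-balancing flow \eqref{resbalflowJ} for fixed large $k$, and a fortiori its uniformity in $k$, is unjustified.

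A second, related gap is the final interchange of limits: exponential convergence in the Bergman space $\mathcal{B}_{N_k}$ controls a matrix distance, not the $C^m$ norms of the induced metrics $\frac{1}{k}\iota_k(t)^*\omega_{FS}$. Converting bounds in $\mathcal{B}_{N_k}$ into $C^m$ bounds on metrics, uniformly in $k$, requires the projective (``R-bounded'') estimates that are precisely the technical content of \cite{D5}, Fine's work and \cite{DK}; without them, the statement that $\|\omega_k(T)-\omega_\infty\|_{C^m}<2\epsilon$ ``propagates to $t=\infty$'' does not follow. In short, the skeleton (J-flow convergence, finite-time approximation by Theorem \ref{thm1}, a finite-dimensional moment-map argument at the critical embedding, then a diagonal limit) is the right one, but the two quantitative ingredients that make it work --- super-polynomially small $\mu^0_{k,\chi}$ at the approximating point and an explicit, possibly $k$-degenerating, lower bound for the linearisation together with the estimates passing from matrix norms to $C^m$ norms --- are missing, and acknowledging them as ``the hard part'' does not discharge them.
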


\section{Relationship with Chow stability}\label{chow-stabl}

\subsection{Chow stability for a linear system} \label{chow-stabl-lins}

In this section we recall from \cite{DK} the GIT notion of \emph{Chow stability for a linear system} $|L_2|$ in a fixed polarized variety $(M,L_1)$. 
 
Let $V$ be a vector space (which is later taken to be $H^0 (M,L_1^r)^*$) and $Y\subset\pr(V) \cong \pr^N$ be a fixed subvariety. We denote by $m$ the dimension of $Y\subset\pr^N$ and $d$ the degree of $Y$. Let $Z$ be the set of $(N-m-1)$-dimensional planes intersecting $Y$ nontrivially, so that $Z\subset \Grass(N-m,N+1)$. We denote the Pl\"ucker embedding of this Grassmannian as \begin{equation}\label{plucker}Pl: \Grass(N-m,N+1) \hookrightarrow \pr(\Lambda^{N-m}V).\end{equation} As $Z$ has codimension one, it is given as the vanishing set of some holomorphic section $f\in H^0(\Grass(N-m,N+1), \scO(d))$ unique up to scaling. One therefore has a corresponding point $[f]\in \pr(H^0(\Grass(N-m,N+1), \scO(d)))$, called the \emph{Chow point}. The $SL(N+1,\comp)$ action on $\pr^N$ induces one on $\pr(H^0(\Grass(N-m,N+1), \scO(d)))$. Then, $Y\subset\pr^N$ is \emph{Chow stable} if its Chow point $[f]$ is GIT stable under the induced action of $SL(N+1,\comp)$. The Hilbert-Mumford criterion states it is enough to show a corresponding weight is positive for each 1-parameter subgroup $\lambda:\comp^*\hookrightarrow SL(N+1,\comp)$. Fixing some 1-parameter subgroup $\lambda$, the limit cycle $Y_0=\lim_{t\to 0}\lambda(t).Y$ is naturally a point in the same Chow variety as $Y$, which has a corresponding Hilbert-Mumford weight. Hence $Y$ is Chow stable if and only if these weights for $Y_0$ are strictly positive for each 1-parameter subgroup.

 This leads to consider the following definition.
 \begin{definition}[\cite{DK} Twisted Chow stability] \label{defchowstable}Let $Y$ be a subvariety of $M$, where $M$ is polarized by $L_1$. We say that $Y$ is $M$-\emph{twisted Chow stable at the level $r$} if it is Chow stable under the embeddings $Y\hookrightarrow\pr(H^0(M,L_1^r)^*)$, i.e.~if the Chow point associated to the embedding $Y\hookrightarrow\pr(H^0(M,L_1^2)^*)$ is GIT stable under the action of $SL(N+1)$; we also say that $M$-\emph{twisted asymptotic Chow stable} if it is $M$-twisted Chow stable at level $r$ for all $r\gg 0$.\end{definition}

We note that twisted Chow stability  is a bona fide, finite-dimensional, GIT stability notion as long as we fix the exponent $r$. This implies, in particular, that the Hilbert-Mumford weight of the 1-parameter subgroup $\mu(\lambda,Y)$ can be computed from the action on $\pr(H^0(M,L_1^r)^*)$ itself. For that, we can consider a geometrization of the 1-parameter subgroups considered above, which eventually leads to an alternative definition of the twisted Chow stability in Theorem \ref{numericalstability}.

\begin{definition} A \emph{test configuration} $(\scX,\scL)$ for a polarized variety $(M,L_1)$ is a variety $\scX$ together with
\begin{itemize} 
\item a proper flat morphism $\pi: \scX \to \comp$,
\item a $\comp^*$-action on $\scX$ covering the natural action on $\comp$,
\item and an equivariant relatively very ample line bundle $\scL$ on $\scX$
\end{itemize}
such that the fibre $(\scX_t,\scL_t)$ over $t\in\comp$ is isomorphic to $(M,L_1^r)$ for one, and hence all, $t \in \comp^*$ and for some $r>0$. We call $r$ the \emph{exponent} of the test configuration. A test configuration $(\scX, \scL)$ is called \emph{trivial} if $\scX \cong M \times \comp$ with trivial $\comp^*$-action on $M$. \end{definition}

\begin{remark} \label{remrossthomas}
By Proposition 3.7 of \cite{RT2}, a test configuration $(\scX,\scL)$ of exponent $r$ can always be obtained as a closure of a $GL$-orbit in $\pr (H^0(M,L^r)^*)$, i.e. we may assume that for any $(\scX,\scL)$ there exists $A \in \mathfrak{gl}(H^0 (M,L^r))$ with rational eigenvalues such that $\scX$ is equal to the flat closure of the $GL$-orbit $\{ e^{tA} \cdot M\} \subset \pr (H^0(M,L^r)^*)$. It is well known that we may assume $A$ to be hermitian \cite{D5}. Note that $(\scX_A, \scL_A)$ is a trivial test configuration if and only if $A$ is a multiple of the identity. Later we shall write $(\scX_A, \scL_A)$ for the test configuration obtained this way by $A \in \mathfrak{gl}(H^0 (M,L^r))$.
\end{remark}

\begin{remark} \label{defflatlimit}
The central fibre $\mathcal{X}_{A,0}$ of the above test configuration is the flat limit of $M$ under the 1-parameter subgroup $\{e^{tA}\}$, which can be defined by taking the ``limit of the defining equations''. If $I$ is the homogeneous ideal defining $M \subset \pr(H^0(M,L^r)^*)$, we can define the initial term of $f \in I$ to be the term of the decomposition of $f$, denoted $\mathrm{in}(f)$, for which $\{e^{tA}\}$ acts with the smallest weight. Then, $\mathcal{X}_{A,0}$ is defined by the ideal $I_0=\{\mathrm{in}(f),f\in I\}$ generated by the set of initial terms of elements in $I$. The reader is referred to Sz\'ekelyhidi's textbook \cite[Sections 6.2 and 6.3]{szebook} for more discussions on the flat limit. %\todo{corrected some typos}  \todo{Added the definition of the flat limit, YH.}
\end{remark}

Let $(\scX,\scL)$ be a test configuration. As the $\comp^*$-action on $(\scX,\scL)$ fixes the central fibre, there is an induced $\comp^*$-action on $(\scX_0,\scL_0)$ and hence on $H^0(\scX_0,\scL^K_0)$ for each $K$. We denote the Hilbert polynomial and total weight of this action respectively by \begin{align*} h(K)&=a_0K^n+a_1K^{n-1}+O(K^{n-2}), \\ w(K) &=b_0K^{n+1}+b_1K^n+O(K^{n-1}).\end{align*} By asymptotic Riemann-Roch and flatness of the test configuration, we have intersection-theoretic formulas for $a_0,a_1,$ as  $$a_0 = r^n\frac{L^n}{n!}, a_1 = r^{n-1}\frac{K_M.L^{n-1}}{2(n-1)!}.$$ 
	
	Observe now that the test configuration $(\scX,\scL)$ for $(M,L_1)$ naturally induces the one $(\scY,\scL |_{\scY})$ for $(Y, L_1 |_Y)$. For its central fibre $(\scY_0,\scL_0)$, denote the corresponding Hilbert and weight polynomials by \begin{align*} \hat{h}(K)&=\hat{a}_0K^m+O(K^{m-1}), \\ \hat{w}(K) &=\hat{b}_0K^{m+1}+O(K^{m}),\end{align*} where $m$ is the dimension of $Y$.

\begin{theorem}[\cite{DK}]\label{numericalstability} Let $(M,L_1)$ be a polarized variety and $Y\subset M$ a subvariety. Consider the 
polynomial expression $$\hat{w}_{r,k}=\hat{w}(k)rh(r)-kw(r)\hat{h}(k)>0$$ in the variables $k$ (setting $k=Kr$) and $r$. As $\hat{w}_{r,k}$ has degree $(m+1)$ in $k$, it can be written as
$$\hat{w}_{r,k} = \sum_{i=0}^{m+1}e_i(r)k^i.$$
Then $Y$ is $M$-twisted Chow stable at the level $r$ if and only if $e_{m+1} (r) >0 $. Moreover,  $Y$ is $M$-twisted asymptotic Chow stable  if and only if 
for all $r\gg 0$, for all test-configurations of exponent $r$, we have $e_{m+1}(r)>0$.
\end{theorem}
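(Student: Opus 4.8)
The content is essentially a careful weight computation: the plan is to run the Hilbert--Mumford numerical criterion and match the resulting Chow weight against $e_{m+1}(r)$ term by term, using Mumford's classical formula for the weight of a Chow point.

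First we unwind the definitions. By Definition~\ref{defchowstable}, $Y$ is $M$-twisted Chow stable at level $r$ precisely when the Chow point $[f]$ of the embedding $Y\hookrightarrow\pr(H^0(M,L_1^r)^*)$ is GIT stable for the induced $SL(N+1,\comp)$-action, where $N=N_r=\dim H^0(M,L_1^r)-1$, $m=\dim Y$, and $d=r^m(L_1^m\cdot Y)$ is the degree of $Y\subset\pr^N$. By the Hilbert--Mumford criterion this is equivalent to asking that, for every one-parameter subgroup $\lambda$ of $SL(N+1,\comp)$, the Hilbert--Mumford weight $\mu(\mathrm{Chow}(Y),\lambda)$ of the limit cycle $\lim_{t\to0}\lambda(t).Y$ be strictly positive. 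By Remark~\ref{remrossthomas} every such $\lambda$ may be taken to be generated by a Hermitian $A\in\mathfrak{gl}(H^0(M,L_1^r))$; this produces a test configuration $(\scX_A,\scL_A)$ of exponent $r$ for $(M,L_1)$ and, by restriction, one $(\scY_A,\scL_A|_{\scY_A})$ for $(Y,L_1|_Y)$ whose central fibre $\scY_{A,0}$ is exactly that flat limit cycle (Remark~\ref{defflatlimit}). So the target is: for each such $A$, show that $\mu(\mathrm{Chow}(Y),\lambda)=c(r)\,e_{m+1}(r)$ for some constant $c(r)>0$ independent of $A$, since then the first equivalence follows by quantifying over $A$, and the asymptotic statement is simply the first one applied at all levels $r\gg0$ (this being the definition of $M$-twisted asymptotic Chow stability).

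The computational heart is Mumford's formula. For a subvariety $Y\subset\pr^N$ of dimension $m$ and a one-parameter subgroup $\lambda$, the Chow weight $\mu(\mathrm{Chow}(Y),\lambda)$ is, up to a universal positive constant, the leading coefficient (in the Hilbert parameter) of a fixed linear combination of: the equivariant weight polynomial $\hat w(\cdot)$ of $\comp^*$ on $\bigoplus_K H^0(\scY_{A,0},\scL_{A,0}^K)$; its dimension polynomial $\hat h(\cdot)$; and the ``ambient'' data, namely the dimension and total weight of the $\comp^*$-representation $H^0(M,L_1^r)=H^0(\scX_{A,0},\scL_{A,0})$. The plan is then to rewrite this combination in terms of the test-configuration polynomials $h,w$ of $(\scX_{A,0},\scL_{A,0})$: after the elementary rescaling of the Hilbert parameter $k=Kr$ matching the embedding exponent with the polarization, the ambient dimension and weight are recorded by $h(r)$ and $w(r)$, while the reduction of the $GL$-weight of $A$ to the $SL$-weight of its trace-free part $A-\frac{\tr A}{N+1}\Id$ is exactly what produces the subtracted term $-k\,w(r)\,\hat h(k)$. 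Assembling the three pieces should give precisely $\hat w_{r,k}=\hat w(k)\,rh(r)-k\,w(r)\,\hat h(k)$, a polynomial of degree $m+1$ in $k$, whose top coefficient $e_{m+1}(r)=\hat b_0\,r\,h(r)-\hat a_0\,w(r)$ is the desired multiple of the Chow weight; morally this is the ``reduced'' Chow weight, comparing the weight carried by the sections of $\scY_{A,0}$ with the share dictated by the ambient average $w(r)/h(r)$. In the untwisted case $Y=M$ this is essentially the Ross--Thomas computation \cite{RT2}, and the twisting by the ambient $M$ at level $r$ should be a routine modification.

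The main obstacle is exactly this bookkeeping: one must keep several normalizations consistent at once --- the passage from a $GL$- to an $SL$-weight (responsible for the $-\hat a_0 w(r)$ term), the simultaneous presence of the test-configuration exponent $r$, the Veronese level $K$, and the substitution $k=Kr$, and the identification of the cycle-theoretic limit implicit in Mumford's formula with the scheme-theoretic flat limit $\scY_{A,0}$ of Remark~\ref{defflatlimit} (a priori these could differ by embedded or lower-dimensional components, which must be checked not to affect the Chow point). Each of these is standard in isolation, but they have to be handled together to land on the exact expression $\hat w_{r,k}$ and to be sure the proportionality constant $c(r)$ is positive.
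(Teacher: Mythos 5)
Your plan is correct and is essentially the argument behind the stated result: the paper itself offers no proof of Theorem \ref{numericalstability} (it is quoted from \cite{DK}), and the proof there is exactly this Hilbert--Mumford reduction combined with Mumford's Chow-weight formula in the style of Ross--Thomas \cite{RT2}, landing on $e_{m+1}(r)=\hat b_0\,r\,h(r)-\hat a_0\,w(r)$ (cf.\ Remark \ref{remchowwte}) as a positive multiple of the Chow weight, with the $SL$-normalization producing the $-\hat a_0 w(r)$ term and the leading coefficients unaffected by the cycle-versus-flat-limit subtlety you flag.
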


\begin{remark} \label{remchowwte}
Observe that we can write $e_{m+1} (r) = \hat{b}_0 r h(r) - w(r) \hat{a}_0$.
\end{remark}

Focussing now on the case $Y$ is a divisor of $M$, we can define the Chow weight of a linear system thanks to next lemma (cf.~\cite[Lemma 4.1]{DK}).

\begin{lemma}[\cite{DK}] \label{wtcstzariski} Let $M\subset\pr^N$ be a projective variety together with a linear system $|L_2|$. For each 1-parameter subgroup $\lambda\hookrightarrow SL(N+1,\comp)$, the Chow weight of $\lambda$ for $D\in |L_2|$ is constant outside a Zariski closed subset of $|L_2|$. We define the Chow weight of $\lambda$ for $|L_2|$ to equal this general value. \end{lemma}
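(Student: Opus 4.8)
The plan is to recognise the Chow weight of $\lambda$ for a divisor $D$ as the value at $D$ of a \emph{constructible} $\Z$-valued function on the irreducible variety $|L_2|=\pr(H^0(M,L_2))$, and then to invoke the elementary fact that such a function is constant on a dense Zariski-open subset; this common generic value is what we will call the Chow weight of $\lambda$ for $|L_2|$. The two inputs are therefore: that ``Chow point of $D$'' organises into an honest morphism on all of $|L_2|$, and that the Hilbert--Mumford weight is a constructible function with finitely many values on the relevant projective space.

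First I would set up the morphism. Write $m=\dim D$. Since $M$ is a variety, every $D\in|L_2|$ is an effective Cartier divisor on $M$, hence a closed subscheme of $\pr^N$ of pure dimension $m$ and of degree $d=\deg_{\pr^N}D=r^{m}(L_1^{m}\cdot L_2)$ independent of $D$. The universal divisor $\scD\subset M\times|L_2|$ is the zero scheme of the tautological section of $p_M^*L_2\otimes p_{|L_2|}^*\scO(1)$, which restricts on each fibre over $|L_2|$ to a nonzero section of $L_2$ and hence to a non-zero-divisor; thus $\scD$ is a relative effective Cartier divisor over $|L_2|$, in particular flat with equidimensional fibres, so it induces a morphism
\[
\Phi:|L_2|\longrightarrow\pr\bigl(H^0(\Grass(N-m,N+1),\scO(d))\bigr),\qquad D\longmapsto[f_D],
\]
sending $D$ to its Chow point. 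Moreover $\Phi$ intertwines the $SL(N+1,\comp)$-actions, i.e.\ $\Phi(g\cdot D)=g\cdot\Phi(D)$ when $g\cdot D$ is viewed as a cycle in $\pr^N$; consequently the limit cycle $D_0=\lim_{t\to0}\lambda(t)\cdot D$ has Chow point $\lim_{t\to0}\lambda(t)\cdot\Phi(D)$, and the Chow weight of $\lambda$ for $D$ is exactly the Hilbert--Mumford weight $\mu(\lambda,\Phi(D))$ of the point $\Phi(D)\in\pr(W)$, where $W:=H^0(\Grass(N-m,N+1),\scO(d))$.

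Next I would run the semicontinuity argument. Decomposing $W=\bigoplus_iW_i$ into weight spaces for $\lambda$, only finitely many weights $i$ occur, and for $[w]\in\pr(W)$ with $w=\sum_iw_i$ the quantity $\mu(\lambda,[w])$ is determined by $\min\{i:w_i\neq0\}$ (it is this minimal weight, up to the sign and normalisation conventions in force); the locus where it takes a prescribed value, $\{w_j\neq0,\ w_i=0\ \text{for}\ i<j\}$, is locally closed, so $\mu(\lambda,\cdot):\pr(W)\to\Z$ is constructible with finitely many values. Precomposing with the morphism $\Phi$, the function $D\mapsto\mu(\lambda,\Phi(D))$ on $|L_2|$ is constructible with finitely many values. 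As $|L_2|$ is irreducible and is covered by the finitely many constructible level sets of this function, the closure of one of them is all of $|L_2|$, so that level set contains a dense Zariski-open subset on which the function is constant; this value is well defined because any two dense open subsets of the irreducible $|L_2|$ meet. We take it as the definition of the Chow weight of $\lambda$ for $|L_2|$.

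The step I expect to require the most care is the first one: checking that $D\mapsto[f_D]$ extends to a morphism on \emph{all} of $|L_2|$ — resting on flatness and equidimensionality of the universal divisor together with the existence of the Chow-form morphism out of such a family — and that this morphism is $SL(N+1,\comp)$-equivariant, so that $\mu(\lambda,\Phi(D))$ genuinely coincides with the Chow weight defined via the flat limits $\lim_{t\to0}\lambda(t)\cdot D$ (note that the naïve family $\{\lim_{t\to0}\lambda(t)\cdot D\}_{D}$ of limit cycles need not be flat over $|L_2|$, which is precisely why one gets only a generic, rather than global, constancy). An essentially equivalent route is to argue inside Theorem~\ref{numericalstability}: in $e_{m+1}(r)=\hat b_0\,rh(r)-w(r)\hat a_0$ (Remark~\ref{remchowwte}) applied to $Y=D$, the factors $h(r),w(r)$ depend only on $(M,L_1)$ and $\lambda$, and $\hat a_0$ is topological, so only $\hat b_0=\hat b_0(D)$ varies; being the leading coefficient of the weight polynomial of the induced $\comp^*$-action on the flat limit of $D$ — equivalently, an intersection number on the total space of the induced test configuration — $\hat b_0$ is a constructible function of $D\in|L_2|$, and one concludes exactly as above.
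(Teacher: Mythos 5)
Your argument is correct, and it is worth pointing out that the paper itself gives no proof of this statement: it is quoted from \cite[Lemma 4.1]{DK}, so there is no internal argument to compare with beyond the later remark that the proof extends to irrational limits of 1-parameter subgroups. Your main route — realizing $D\mapsto[f_D]$ as a morphism $\Phi:|L_2|\to\pr(W)$ from the universal divisor $\scD\subset M\times|L_2|$, observing that the Hilbert--Mumford weight is a constructible function on $\pr(W)$ with locally closed level sets, and concluding by irreducibility of $|L_2|$ — is the natural GIT argument and is compatible with how the lemma is used later in the paper (in particular your observation that $h(r)$, $w(r)$ and $\hat{a}_0$ are independent of $D$, so constancy of the Hilbert--Mumford weight of the Chow point and constancy of $e_{m+1}(r)=\hat{b}_0 r h(r)-w(r)\hat{a}_0$ are equivalent). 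Two points you flag should indeed be justified rather than asserted: (i) that a flat, fibrewise equidimensional family of cycles induces a morphism to the Chow variety is where seminormality of the base enters (the Hilbert--Chow map is in general only a morphism after seminormalization), which is harmless here since $|L_2|$ is a projective space; (ii) the identification of the Chow weight of $\lambda$ for $D$, defined via the limit cycle $\lim_{t\to 0}\lambda(t)\cdot D$, with $\mu(\lambda,\Phi(D))$ uses the standard facts that the limit of Chow points is the Chow point of the limit cycle and that the Hilbert--Mumford weight of a point equals the weight at its $\lambda$-limit. Your alternative route through constructibility of $\hat{b}_0$ is closer in spirit to the weight-polynomial formulation of \cite{DK}, but as written it is the sketchier of the two (constructibility of $D\mapsto\hat{b}_0(D)$ would itself require a generic-flatness argument for the family of central fibres), so the first argument is the one that carries the proof.
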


The above lemma holds even when we take $\lambda$ to be a limit of 1-parameter subgroups so that the image in $SL(N+1,\mathbb{C})$ has irrational eigenvalues. 
This  is clear from the proof of \cite[Lemma 4.1]{DK}.

\begin{definition}[\cite{DK} Chow stability of a linear system]\label{defasymChow} Let $M\subset\pr^N=\mathbb{P}(H^{0}(M,L_1^r)^*)$ be a projective variety, polarized by $L_1$, and let $|L_2|$ be a linear system.
We say that a linear system $|L_2|$ is \emph{Chow stable at level $r$} if the Chow weight of each (nontrivial) test configuration of exponent $r$ is strictly positive. This is equivalent to the fact that for each 1-parameter subgroup $\lambda\hookrightarrow SL(h^0(M,L_1^r),\comp)$, the Chow weight of $\lambda$ for $|L_2|$ is strictly positive.
We say that $|L_2|$ is \emph{asymptotically Chow stable} if it is Chow stable at level $r$ for all $r\gg 0$.
\end{definition}

\begin{remark}
 Unlike the twisted Chow stability notion for a fixed subvariety, the Chow stability of a linear system is not a bona fide GIT notion; although the Chow weight of a 1-parameter subgroup $\lambda$ is constant outside of a Zariski closed subset, this Zariski closed subset to be removed depends on $\lambda$. Nevertheless, we shall see that Kempf-Ness theorem still holds true as we are discussing in next section.
\end{remark}

\subsection{J-balanced metrics and asymptotic Chow stability}

In this section we work with a smooth projective variety $M$ embedded in a fixed projective space $\pr^N$, and with a linear system $|L_2|$. We prove the following result that strengthens a result of \cite{DK} and provides a complete algebro-geometric description of J-balanced metrics.

\begin{newTheorem}\label{balancedimpliesstable} Consider a polarized complex manifold $(M,L_1)$ and an auxiliary ample line bundle $L_2$ on $M$. Then $(M,L_1,L_2)$ admits a J-balanced metric in the K\"ahler class $c_1 (L_1^r)$ if and only if the linear system $|L_2|$ is Chow stable for $M \subset \mathbb{P} (H^0 (M,L_1^r)^*)$. \end{newTheorem}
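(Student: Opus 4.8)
The plan is to prove the equivalence by recognizing it as an instance of the Kempf--Ness correspondence in a (slightly non-standard) finite-dimensional GIT setting, and then bridging the resulting stability notion to the Chow stability of the linear system $|L_2|$ via Theorem~\ref{numericalstability} and Lemma~\ref{wtcstzariski}.

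First I would set up the moment-map / symplectic-quotient picture carefully. Fixing $r$, the existence of a J-balanced metric in $c_1(L_1^r)$ is, by Definition~\ref{mu0kchi}, exactly the existence of a zero of the moment map $\mu^0_{r,\chi}$ on the $GL(N+1,\comp)$-orbit of the embedding $\iota: M \hookrightarrow \pr(H^0(M,L_1^r)^*)$ (acting by $SU(N+1)$, after projecting out the trace). The map $\mu_{r,\chi}$ defined in \eqref{Jmomentmap} is an integral over $M$ of the Fubini--Study moment map $\mu_{FS}$ against the \emph{fixed} measure $\chi \wedge \iota^*\omega_{FS}^{n-1}$; since this is a genuine moment map for the $U(N+1)$-action on the space of ordered bases (this is asserted right after \eqref{Jmomentmap}, citing \cite{DK}), we are in a situation where Kempf--Ness applies: a zero of the moment map exists in the complexified orbit iff that orbit is (poly)stable for the corresponding linearized GIT problem. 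The natural Kempf--Ness functional here is the integrated Aubin--Mabuchi / $\mathcal{E}_\chi$-type functional whose Euler--Lagrange equation is $\mu^0_{r,\chi}=0$ and whose asymptotic slope along a one-parameter subgroup $\lambda$ computes (up to normalization) the Chow weight of $\lambda$ for $|L_2|$; I would either cite the relevant convexity/properness statements from \cite{DK} or reprove the one-line computation that the slope equals $\hat b_0 r h(r) - w(r)\hat a_0$, i.e.~the quantity $e_{m+1}(r)$ of Remark~\ref{remchowwte} with $Y=D \in |L_2|$ a general divisor.

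Next I would address the subtlety flagged in the remark before this subsection: Chow stability of a linear system is \emph{not} an honest GIT notion because the Zariski-closed ``bad locus'' removed in Lemma~\ref{wtcstzariski} depends on $\lambda$. So I cannot simply quote a black-box Kempf--Ness theorem. The fix is to run the variational argument directly on $\mathcal{B} = GL(N+1)/U(N+1)$: (i) the functional is convex along geodesics (one-parameter subgroups), so a critical point is a minimizer and is unique up to $SU(N+1)$, recovering uniqueness of J-balanced metrics; (ii) if a J-balanced metric exists, convexity forces every geodesic ray to have nonnegative slope, and strict positivity for non-trivial $\lambda$ follows because the functional is strictly convex transverse to the stabilizer (here one uses that a non-trivial $\lambda$ moves $M$ and hence a general $D\in|L_2|$, using the Zariski-genericity of Lemma~\ref{wtcstzariski} to evaluate the slope on the generic fiber) — giving Chow stability of $|L_2|$; (iii) conversely, if $|L_2|$ is Chow stable at level $r$, all slopes are positive, which by a standard properness-along-rays argument (coercivity of the convex functional whose all asymptotic slopes are positive, on the finite-dimensional space $\mathcal{B}$ modulo its isometry group) yields a minimizer, i.e.~a J-balanced metric. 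The passage between ``Chow weight of each test configuration of exponent $r$'' and ``Chow weight of each one-parameter subgroup $\lambda \hookrightarrow SL(N+1,\comp)$'' is exactly Definition~\ref{defasymChow} together with Remark~\ref{remrossthomas}.

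The main obstacle, and the step I would spend the most care on, is step (iii): upgrading positivity of all asymptotic slopes to actual existence of a minimizer despite the non-GIT nature of the problem. In the classical Chow-stability $\Leftrightarrow$ balanced-metric theorem one invokes properness of the Kempf--Ness functional, but here the functional's slope is only controlled on the generic member of $|L_2|$, so I need the slope lower bound to be uniform over the sphere of directions in $\mathcal{B}$ — this requires a compactness argument showing the infimum of slopes over unit-speed rays is attained and positive (using that $|L_2|$ Chow stable at level $r$ is a finite-dimensional, hence closed, condition once $r$ is fixed, as emphasized after Definition~\ref{defchowstable}), after which standard convex analysis gives coercivity and a minimizer. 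I would also need to check the minimizer actually lies in the $GL(N+1)$-orbit of the \emph{smooth} $M$ (not a degeneration), which follows because the functional blows up as one approaches the boundary strata corresponding to non-trivial flat limits $\mathcal{X}_{A,0}$, precisely by the positivity of $e_{m+1}(r)$ for those $A$. The two remaining ingredients — convexity of the functional along one-parameter subgroups and the slope formula — I would either cite from \cite{DK} or dispatch with the short direct computation, since they are routine once the moment-map framework is in place.
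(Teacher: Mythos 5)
Your overall frame (Kempf--Ness / variational argument on $\mathcal{B}_r$ with the convex functional $I_{\mu^0_{r,\chi}}$, slope along a one-parameter subgroup equal to the Chow weight of $|L_2|$) matches the paper, and the direction ``balanced $\Rightarrow$ stable'' is indeed just quoted from \cite{DK}. But in step (iii) there is a genuine gap, and it is located at a different place from where you put your effort. You assert that ``if $|L_2|$ is Chow stable at level $r$, all slopes are positive.'' Chow stability at level $r$ is a statement about algebraic one-parameter subgroups, i.e.\ about geodesics $H(t)=e^{-tA}(e^{-tA})^*$ generated by hermitian $A$ with \emph{rational} eigenvalues (Definition \ref{defasymChow}, Remark \ref{remrossthomas}). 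The geodesic rays of $\mathcal{B}_r$ form a continuum, and density of rational $A$ plus continuity of the slope only gives the slope $\ge 0$ in irrational directions, which is not enough: a smooth geodesically convex function can have strictly positive asymptotic slope in every rational direction, zero slope in a single irrational direction, and no critical point at all (on $\mathbb{R}^m$ take $f(x)=\langle x,v\rangle+\sqrt{1+|x|^2}$ with $|v|=1$ pointing in an irrational direction: its recession slopes are $\langle d,v\rangle+|d|$, positive on all rational unit vectors, zero at $d=-v$, and $\inf f=0$ is not attained). So your ``compactness over the sphere of directions'' fix cannot work from rational positivity alone, and the remark that Chow stability at level $r$ ``is a closed condition'' does not supply the missing strict positivity. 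This is precisely the crux the paper has to address: Lemma \ref{continuityb0} produces rational approximations $A_p\to A$ with the \emph{same flat limit}, so the slope of $I_D^{\mathrm{AYM}}$ is computed by the limit-cycle integral (\ref{inthamiltonianx0}); Lemma \ref{jbalchowirrat} then writes an arbitrary hermitian $A$ as $A_\alpha+A_\beta$ with $A_\alpha$ rational and all three matrices inducing the same central fibre, and uses the linearity in $A$ of the explicit slope formula (Equality (\ref{keylinearity})) to split the slope of $I_{\mu^0_{r,\chi}}$ into a strictly positive rational Chow weight plus a nonnegative remainder. As the paper stresses, this step relies on the explicit formula for $I_D^{\mathrm{AYM}}$ and would fail for an arbitrary convex functional, so it cannot be dispatched as ``routine.''

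Two smaller points. First, the step you flag as the main obstacle (passing from strict positivity of all asymptotic slopes to existence of a minimizer) is, once positivity is known for \emph{all} directions, standard convex analysis on the finite-dimensional symmetric space $\mathcal{B}_r$; the paper treats it as such right after Theorem \ref{varcharjbal}. Second, your worry about the minimizer ``degenerating to a flat limit $\mathcal{X}_{A,0}$'' is a non-issue in this formulation: the functional is defined on all of $\mathcal{B}_r$, which parametrizes Bergman metrics for the fixed embedding orbit, and any critical point is by Theorem \ref{varcharjbal} a J-balanced metric; there are no boundary strata to exclude.
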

%\todo{I think we are making a little mistake of presentation here: the theorem should be self-contained, to make clear how things depend on $L_1$}

In \cite{DK}, it is proved one sense of this equivalence, namely that the existence of a critical metric of the J-flow implies that the linear system $|L_2|$ is asymptotically Chow stable. Thus it suffices to prove that the Chow stability of $|L_2|$ implies the existence of a J-balanced metric, which is the aim of the present section.

Before beginning the proof of this theorem, we need to recall some functionals defined in \cite{DK}. The reader is also referred to \cite{P-S4} for the review of background materials.
\begin{definition} \label{definitionaymfn}Consider $D\in |L_2|$ a smooth divisor in $M$. We define the functional $I_D^{\text{AYM}}$ over the space of $Met(L_1)$ variationally by $$\frac{d}{dt}I^{\text{AYM}}_D(\phi(t)) = -\frac{1}{\Vol_{{L_1}}(D)} \int_D \dot{\phi}_t \omega_{\phi_t}^{n-1},$$ taking the value zero at $\phi=0$. \\
Here $\omega_{\phi_t}:=\omega+\ddbar\phi_t$ and $\Vol_{{L_1}}(D)=\int_D c_1(L_1)^{n-1} = \int_M c_1(L_1)^{n-1}.c_1(L_2)$ is the volume of $D$. \end{definition} 

We then define a functional $J_{\chi}$ by
$$J_{\chi}(\phi) = -\Vol_{L_1}(M)\int_{D\in|L_2|} I_D^{\text{AYM}}(\phi)d\mu,$$
where the signed measure $d\mu$ is defined as follows.

\begin{theorem}\cite{L-Sz}\label{generalelement} Let $M$ be a smooth projective $n$-dimensional variety together with a very ample line bundle $L_2$. Let $\alpha\in c_1(L_2)$ be a positive $(1,1)$-form. Then there is a smooth signed measure $\mu$ on the projective space $|L_2|$ such that $$\alpha = \int_{D\in|L_2|}[D]d\mu$$ holds in the weak sense, i.e. for all smooth $(n-1,n-1)$-forms $\beta$ we have $$\int_M \alpha\wedge\beta = \int_{D\in|L_2|}\left(\int_D \beta\right)d\mu.$$ \end{theorem}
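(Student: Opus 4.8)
The plan is to realize $\alpha$ as a divisor average by combining the Poincar\'e--Lelong formula with an inversion of a projective Radon/Crofton transform, taking for $\mu$ a controlled (signed) perturbation of the Fubini--Study measure. First I would fix a smooth Hermitian metric $h$ on $L_2$ whose Chern form is $\alpha$ (possible since $\alpha$ is a smooth representative of $c_1(L_2)$; positivity is not needed beyond placing $\alpha$ in the class) together with a reference inner product $H$ on $V:=H^0(M,L_2)$. In the Poincar\'e--Lelong normalization this gives, for every $s\in V\setminus\{0\}$,
\[
[D_s]=\alpha+\frac{1}{2\pi}\ddbar K(\cdot,[s]),\qquad K(p,[s]):=\log\frac{\|s(p)\|_h^2}{\|s\|_H^2},
\]
where the right-hand side depends only on $[s]\in|L_2|$ since $\log\|s\|_H^2$ is constant in $p$. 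Hence, for any smooth signed measure $\mu$ on $|L_2|$ of total mass one,
\[
\int_{D\in|L_2|}[D]\,d\mu=\alpha+\frac{1}{2\pi}\ddbar F_\mu,\qquad F_\mu(p):=\int_{|L_2|}K(p,[s])\,d\mu([s]).
\]
Thus the theorem reduces to producing a mass-one $\mu$ with $F_\mu$ constant; writing $\mu=\mu_{FS}+\nu$ with $\nu$ of mass zero, and noting the direct computation $F_{\mu_{FS}}=\log\rho_H+\mathrm{const}$ with $\rho_H$ the Bergman density, this amounts to solving the integral equation $F_\nu=\phi$ modulo constants, where $\phi$ is a potential for $\alpha-\tfrac1{2\pi}\iota^\ast\omega_{FS}$.

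Second, I would identify the averaging operator geometrically. Very ampleness makes the incidence variety $\mathcal D=\{(p,[s]):s(p)=0\}\subset M\times|L_2|$ smooth: the first projection exhibits $\mathcal D$ as the $\mathbb{P}^{N-1}$-bundle over $M$ whose fibre over $p$ is the hyperplane $H_p=\mathbb{P}(\ker\,\mathrm{ev}_p)\subset|L_2|$. Writing $\int[D]\,d\mu$ as a fibre integral over $\mathcal D$ shows immediately that it is a smooth form whenever $\mu$ is smooth, and identifies $\mu\mapsto F_\mu$ (up to the explicit Fubini--Study term) with the projective Radon transform that averages $\mu$ over the family of hyperplanes $\{H_p\}_{p\in M}$ parametrized by the dual embedding $M\to(\mathbb{P}^N)^\ast$. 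This reframes the problem as inverting a restricted Radon transform.

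Third, I would solve $F_\nu=\phi$ by a Hahn--Banach/Fredholm argument. Its adjoint sends a density $\psi$ on $M$ to the function $[s]\mapsto\int_M\psi\,K(\cdot,[s])$, and I would prove this adjoint injective modulo constants: the sections separate points and $1$-jets (very ampleness), and $K(\cdot,[s])$ is real-analytic off $D_s$, so a unique-continuation argument forces a $\psi$ killing all the $K(\cdot,[s])$ to annihilate the class, i.e.\ to vanish against $\ddbar$-exact forms; closed range for $\ddbar$ on the compact K\"ahler manifold $M$ (Hodge theory) then upgrades density of the image to surjectivity, yielding the desired smooth signed $\nu$ and hence $\mu$.

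The hard part will be exactly this surjectivity/invertibility of the restricted Radon transform, which is a genuinely infinite-dimensional statement: the finite-dimensional Bergman (Fubini--Study) metrics do not exhaust a K\"ahler class, which is precisely why signed measures are unavoidable. I would guide the general argument by the model case $M=\mathbb{P}^1$, $L_2=\mathcal O(d)$, where $|L_2|=\mathrm{Sym}^d\mathbb{P}^1$ and the transform degenerates to the classical ``sum of roots'' pushforward; there one solves the equation explicitly, confirming both the mechanism and that the construction produces an honest smooth signed density, after which the adjoint-injectivity plus closed-range scheme handles the general $M$.
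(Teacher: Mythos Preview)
The paper does not supply its own proof of this statement; it is quoted from \cite{L-Sz} and used as a black box, so there is no in-paper argument to compare your proposal against.

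On its own merits: your Poincar\'e--Lelong reduction to the integral equation $F_\nu=\phi$ (modulo constants) for a mass-zero perturbation $\nu$ is correct and is the natural opening move. The genuine gap is the surjectivity step, which you rightly flag as the crux but do not actually close. Adjoint injectivity of $\nu\mapsto F_\nu$ yields only \emph{density} of the image; your appeal to ``closed range for $\ddbar$'' is a property of $\ddbar$, not of the integral operator $F$, and composing a dense-image map with a closed-range operator is still not a surjection. The Hahn--Banach scheme also carries no regularity information, so even if it produced a distributional $\nu$ you have no argument that it is a \emph{smooth} density. The unique-continuation sketch (``$\psi$ killing all $K(\cdot,[s])$ must vanish against $\ddbar$-exact forms'') at best re-establishes that the adjoint has trivial kernel, which is the density statement again, not surjectivity.

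A route that closes both gaps is to avoid the \emph{restricted} transform on $M$ altogether and pass to the ambient $\mathbb{P}^N=\mathbb{P}(H^0(M,L_2)^*)$. Write $\alpha=\iota^*\omega_{FS}+\ddbar f$, extend $f$ to a smooth $\tilde f$ on $\mathbb{P}^N$, and solve the problem for $\tilde\alpha=\omega_{FS}+\ddbar\tilde f\in c_1(\mathcal{O}(1))$ on $\mathbb{P}^N$; since hyperplanes in $\mathbb{P}^N$ cut out exactly the divisors in $|L_2|$ on $M$, any $\mu$ that works for $(\mathbb{P}^N,\mathcal{O}(1),\tilde\alpha)$ also works for $(M,L_2,\alpha)$. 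On $\mathbb{P}^N$ the transform in question is the full $SU(N+1)$-equivariant dual Radon (log-potential) transform between $\mathbb{P}^N$ and $(\mathbb{P}^N)^*$, whose invertibility on $C^\infty$ modulo constants is classical (Helgason's inversion, or equivalently an isotypic decomposition under $SU(N+1)$ together with Schur's lemma). This delivers surjectivity and smoothness of $\mu$ simultaneously---precisely the two points your direct argument on $M$ leaves open. Your $\mathbb{P}^1$ toy model is already an instance of this ambient computation rather than of the restricted-transform scheme you propose for general $M$.
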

Here we may integrate only over the smooth elements $D\in|L_2|$, since the complement has measure zero (by Bertini's Theorem) this does not affect the value of the integral. 

Writing $\mathcal{B}_r$ for the space of positive definite hermitian matrices on the vector space $H^0(M,L_1^r)$, recall (cf. \cite[Section 3.1]{DK}) that there exists a functional $I_{\mu^0_{r, \chi}} : \mathcal{B}_r \to \mathbb{R}$ defined as
	\begin{equation}
		I_{\mu^0_{r, \chi}}(H):= J_{\chi}\circ FS (H) + \frac{Vol_{L_1}(M)}{h^0(M,L_1^r)} \log \det (H). \label{jbalfunc}
\end{equation}
This functional is the {\it integral of the moment map} $\mu^0_{r, \chi}$ defined in Definition \ref{mu0kchi}, in the sense that its differential gives $\mu^0_{r, \chi}$. For the notion of integral of a moment map, we refer to \cite[Section 3]{MR}. Such functional has the property to have its critical points that coincide with zeros of the moment map in the complex orbit and enjoys properties of convexity and cocyclicity.
Consequently, we have the following result.
\begin{theorem} \cite[Corollary 3.5]{DK} \label{varcharjbal}
	$I_{\mu^0_{r, \chi}}$ is a geodesically convex functional on $\mathcal{B}_r$ whose unique critical point is the J-balanced metric.
\end{theorem}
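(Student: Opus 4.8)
The plan is to recognise the statement as an instance of the general convexity theory for the integral of a moment map (see \cite{MR}), the extra input being that $\mu^{0}_{r,\chi}$ is the moment map of a genuine Hamiltonian \emph{K\"ahler} space, not merely of a symplectic one. First I would recall, from \eqref{Jmomentmap}--\eqref{mu}, that $\mu_{r,\chi}$ is the fibre integral over $M$ of the Fubini-Study moment map $\mu_{FS}$ for the $U(N+1)$-action on $\PP^{N}$, taken against the fibrewise measure $\chi\wedge\iota^{*}\omega_{FS}^{n-1}$; since $\chi$ is K\"ahler and $\iota^{*}\omega_{FS}$ is a K\"ahler form on the embedded copy of $M$, this measure is a strictly positive volume form. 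Following the fibre-integration construction of Donaldson (as adapted in \cite{DK}), one places on the total space of the universal family over the space of bases of $H^{0}(M,L_{1}^{r})$ the tautological closed $2$-form and integrates it over the compact fibre $M$; the resulting $2$-form $\Omega$ on the space of bases is closed by Stokes' theorem, is semi-positive, and is strictly positive along the $GL(N+1,\comp)$-directions because the fibre form is semi-positive while the measure is positive. With respect to $\Omega$, Definition \ref{mu0kchi} exhibits $\mu^{0}_{r,\chi}$ as a moment map for the $U(N+1)$-action, and $I_{\mu^{0}_{r,\chi}}$ as defined in \eqref{jbalfunc} is precisely its integral of the moment map along the $GL(N+1,\comp)$-orbit; in particular $dI_{\mu^{0}_{r,\chi}}=\mu^{0}_{r,\chi}$, as already recalled.

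Next I would carry out the convexity step. A geodesic of $\mathcal{B}_{r}=GL(N+1,\comp)/U(N+1)$ through $H_{0}$ has the form $H(t)=H_{0}^{1/2}\exp(tA)H_{0}^{1/2}$ with $A$ Hermitian, and it corresponds to the orbit $t\mapsto e^{tA}\cdot\iota_{0}$ of the $GL(N+1,\comp)$-action on embeddings. Because the $\log\det$ term in \eqref{jbalfunc} makes $I_{\mu^{0}_{r,\chi}}$ invariant under $H\mapsto e^{c}H$, I may take $A$ traceless, and then $\tfrac{d}{dt}I_{\mu^{0}_{r,\chi}}(H(t))=\tr\big(\mu^{0}_{r,\chi}(H(t))A\big)=\tr\big(\mu_{r,\chi}(H(t))A\big)$. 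Differentiating once more and invoking the defining identity of a K\"ahler moment map --- that the derivative of the Hamiltonian $\tr(\mu A)$ along the gradient direction of the complexified action equals the pointwise squared norm of the infinitesimal action --- the second derivative becomes the squared norm, for the Riemannian metric associated to $\Omega$, of the infinitesimal action of $A$, and is therefore non-negative; this gives geodesic convexity. (One can instead differentiate $\tr(\mu_{r,\chi}(H(t))A)$ directly from \eqref{Jmomentmap}--\eqref{mu}, separating the variation of $\mu_{FS}\circ\iota_{t}$ from that of the measure $\chi\wedge\iota_{t}^{*}\omega_{FS}^{n-1}$ and integrating by parts in the latter using $d\chi=d(\iota^{*}\omega_{FS})=0$; positivity of $\chi$ and semi-positivity of $\iota^{*}\omega_{FS}$ are then what force the sum to be non-negative.) In either route the Hessian vanishes in a direction $A$ exactly when the vector field generated by $A$ is tangent to $\iota(M)\subset\PP^{N}$, that is, when $e^{tA}$ preserves $M$, equivalently when $A$ lies in the image of $\mathrm{Lie}\,\Aut(M,L_{1})$.

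It then remains to identify the critical points and to establish uniqueness. Since $dI_{\mu^{0}_{r,\chi}}=\mu^{0}_{r,\chi}$, a point $H\in\mathcal{B}_{r}$ is critical if and only if $\mu^{0}_{r,\chi}(H)=0$, which by Definition \ref{mu0kchi} says precisely that the associated embedding --- equivalently the Bergman metric $FS(H)$ --- is J-balanced. For uniqueness, $\mathcal{B}_{r}$ with its symmetric-space metric is a Hadamard manifold, so the critical set of the convex functional $I_{\mu^{0}_{r,\chi}}$ coincides with its minimum set, which is geodesically convex and in particular connected; along any geodesic joining two critical points the functional has vanishing derivative at both endpoints, hence is constant, so its Hessian vanishes identically along it, and by the previous step the joining direction generates a one-parameter group of automorphisms of $(M,L_{1}^{r})$. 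Hence the critical set is contained in a single orbit of $\Aut(M,L_{1})$ acting on $\mathcal{B}_{r}$; this reduces to a single point when that group is discrete --- as in the classical theory of balanced metrics --- and gives uniqueness modulo automorphisms in general.

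The step I expect to be the main obstacle is the first one: verifying that $\mu^{0}_{r,\chi}$ really is the moment map of a \emph{K\"ahler} form $\Omega$ on the space of bases, i.e.\ that the fibre-integrated $2$-form is both closed and semi-positive even though the measure $\chi\wedge\iota^{*}\omega_{FS}^{n-1}$ defining it varies with the point. Equivalently, in the hands-on route, the difficulty is to control the cross-terms produced by differentiating this measure and to show that they do not destroy the sign of the Hessian --- which is exactly where the positivity of the twisting data enters. Once this is in place, the convexity, the characterisation of the critical points, and the uniqueness are all formal consequences.
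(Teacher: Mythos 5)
Your overall strategy coincides with the paper's, which does not reprove the statement but quotes \cite[Corollary 3.5]{DK} and justifies it by the integral-of-the-moment-map formalism of \cite{MR}: the fibre-integration construction of the closed two-form, the identity $dI_{\mu^0_{r,\chi}}=\mu^0_{r,\chi}$, convexity along geodesics of $\mathcal{B}_r$, and the identification of critical points with zeros of $\mu^0_{r,\chi}$, i.e.\ with J-balanced metrics (Definition \ref{mu0kchi}), are exactly the ingredients the paper appeals to.

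The genuine gap is in your uniqueness step. You claim that the Hessian of $I_{\mu^0_{r,\chi}}$ along the geodesic generated by a Hermitian $A$ degenerates exactly when the vector field $\xi_A$ is tangent to $\iota(M)$, i.e.\ when $A\in\mathrm{Lie}\,\Aut(M,L_1)$, and you accordingly conclude only uniqueness modulo $\Aut(M,L_1)$, with genuine uniqueness only when that group is discrete. That degeneracy analysis belongs to the classical, untwisted balanced/Chow-norm setting and does not transfer here: since the twisting form $\chi$ is held fixed on $M$ while the embedding moves, the functional $J_\chi\circ FS$ in \eqref{jbalfunc} is not $\Aut(M,L_1)$-invariant, and automorphism directions are not flat directions. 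In the moment-map picture the second derivative is the squared norm of the infinitesimal action of $A$ on the space of embeddings, measured with the fibre-integrated form built from the strictly positive density involving $\chi$ and \eqref{mu}--\eqref{Jmomentmap}; the stabiliser of an embedding (as a parametrised subvariety spanning $\pr(H^0(M,L_1^r)^*)$) consists of scalar matrices only, and for tangential but nonscalar $A$ the relevant pointwise integrand, built from $|\xi_A|^2_{FS}$ against $\chi>0$, does not vanish. So the kernel of the Hessian consists of multiples of the identity (already neutralised by the trace normalisation in $\mu^0_{r,\chi}$), the functional is strictly convex transverse to scaling, and the critical point is genuinely unique — with no hypothesis on $\Aut(M,L_1)$. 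This absolute uniqueness is what \cite[Corollary 3.5]{DK} asserts and what the paper uses later (``As J-balanced metrics are unique, one can recover the uniqueness of the critical metrics as was proved in \cite{C1}''); as written, your argument establishes a strictly weaker statement, and the step identifying the null directions with $\mathrm{Lie}\,\Aut(M,L_1)$ is incorrect for this twisted functional.
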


Theorem \ref{varcharjbal} means that we only need to show that $I_{\mu^0_{r, \chi}}$ has a critical point, in order to prove that $(M,L_1,L_2)$ admits a J-balanced metric. Since $I_{\mu^0_{r, \chi}} $ is geodesically convex and $\mathcal{B}_r$ is geodesically complete (with respect to the bi-invariant metric), we only need to show that for any geodesic $\left\{ H(t) \right\} \subset \mathcal{B}_r$ we have
\begin{equation}
	\lim_{t \to \infty} \frac{d}{dt} I_{\mu^0_{r, \chi}} (H(t)) > 0 \label{jbalfuncslinf}
\end{equation}
in order to prove that $I_{\mu^0_{r, \chi}}$ has a critical point.

Thus, in order to prove Theorem \ref{balancedimpliesstable}, it suffices to show that Chow stability of $|L_2|$ implies (\ref{jbalfuncslinf}) for any geodesic $\left\{ H(t) \right\} \subset \mathcal{B}_r$.

As in Remark \ref{remrossthomas}, we consider test configurations $(\scX_A, \scL_A)$ of exponent $r$ generated by a hermitian matrix $A \in \mathfrak{gl} (H^0(M,L_1^r))$ with rational eigenvalues. Note that, by Remark \ref{remchowwte}, Chow stability of $|L_2|$ implies that for all $D \in |L_2|$ outside of a Zariski closed subset we have
\begin{equation*}
	r \hat{b}_0 - \frac{\hat{a}_0}{h(r)}w(r) >0,
\end{equation*}
for all choices of hermitian matrices $A $ with rational eigenvalues giving rise to $(\scX_A, \scL_A)$. Recalling Lemma \ref{wtcstzariski}, the above weight is constant for all $D \in |L_2|$ outside of a Zariski closed subset. We thus get
\begin{equation}
	\int_{D \in |L_2|}\left(   r \hat{b}_0 - \frac{\hat{a}_0}{h(r)}w(r)  \right) d\mu >0 \label{ineqchow}
\end{equation}
for all hermitian matrices $A$ with rational eigenvalues.

Our aim is to relate the above inequality to (\ref{jbalfuncslinf}). The key result that we need to establish this connection is the following theorem of Donaldson.

\begin{theorem}[\emph{\cite[Proposition 3]{D5}}]
	Let $\left\{ H(t) \right\} \subset \mathcal{B}_r$ be a geodesic defined by $H(t) = e^{-At} (e^{-At})^*$ with $A$ having rational eigenvalues. Then we have
	\begin{equation} \label{D5Prop3}
		\lim_{t \to \infty} \frac{1}{2} \frac{d}{dt} I_D^{\mathrm{AYM}} (FS(H(t))) = - r  \frac{\hat{b}_0}{\hat{a}_0}
	\end{equation}
	where $\hat{b}_0$ is defined in terms of the test configuration $(\scX_A, \scL_A)$ of exponent $r$.
\end{theorem}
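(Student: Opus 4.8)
The plan is to transport the whole computation to the fixed projective space $\PP^N=\PP(H^0(M,L_1^r)^*)$, where the left hand side becomes the integral of a \emph{fixed} smooth form over the moving cycle $e^{tA}\cdot\iota_0(D)$; this is, in substance, Donaldson's argument for \cite[Proposition 3]{D5} carried out on the subvariety $D$. First I would make the geodesic explicit: diagonalising $A$ in an $H(0)$-orthonormal basis $s_0,\dots,s_N$ of $H^0(M,L_1^r)$ with eigenvalues $\lambda_\alpha$, the embedding associated to $H(t)$ is $\iota_t(p)=[e^{\lambda_0 t}s_0(p):\cdots:e^{\lambda_N t}s_N(p)]=e^{tA}\cdot\iota_0(p)$ and $\omega_{\phi_t}=\tfrac1r\,\iota_t^*\omega_{FS}$. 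Differentiating, $\dot\phi_t=\tfrac1r\,\partial_t\log\big(\sum_\alpha e^{2\lambda_\alpha t}|s_\alpha|^2\big)$ is, up to an explicit constant, the pullback by $\iota_t$ of the Hamiltonian $\theta:=\langle\mu_{FS},A\rangle=\big(\sum_\alpha\lambda_\alpha|z_\alpha|^2\big)\big/\big(\sum_\alpha|z_\alpha|^2\big)$ of the $\mathbb{C}^*$-action on $(\PP^N,\omega_{FS})$. Substituting into the defining relation of $I_D^{\mathrm{AYM}}$ and using that $\iota_t=e^{tA}\circ\iota_0$ is a fixed automorphism of $\PP^N$ precomposed with the fixed embedding $\iota_0$, the change of variables $q\mapsto e^{tA}q$ rewrites $\tfrac{d}{dt}I_D^{\mathrm{AYM}}(FS(H(t)))$ as a constant multiple of $\int_{\,e^{tA}\cdot\iota_0(D)}\theta\,\omega_{FS}^{n-1}$.

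Now I would let $t\to\infty$. Since $A$ has rational eigenvalues the closure of the $\mathbb{C}^*$-orbit of $\iota_0(D)$ is a flat projective family over $\mathbb{C}$ whose central cycle is the one underlying the induced test configuration $(\scY_A,\scL_A|_{\scY_A})$ of $(D,L_1^r|_D)$; hence $[e^{tA}\cdot\iota_0(D)]\to[\scY_0]$ weakly as currents, and, $\theta\,\omega_{FS}^{n-1}$ being a fixed smooth form, the integrals converge to $\int_{\scY_0}\theta\,\omega_{FS}^{n-1}$. Finally $\scY_0$ is $\mathbb{C}^*$-invariant of dimension $n-1$, and equivariant Riemann--Roch (equivalently the Duistermaat--Heckman expansion) identifies $\int_{\scY_0}\theta\,\tfrac{\omega_{FS}^{n-1}}{(n-1)!}$ with $\hat b_0=\lim_{K}\hat w(K)/K^{n}$, just as $\int_{\scY_0}\tfrac{\omega_{FS}^{n-1}}{(n-1)!}=\hat a_0$. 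Dividing these and keeping track of the powers of $r$ — coming from $\omega_{\phi_t}=\tfrac1r\iota_t^*\omega_{FS}$, from $\dot\phi_t$, and from the asymptotic Riemann--Roch value $\hat a_0=r^{n-1}\Vol_{L_1}(D)/(n-1)!$ — produces the asserted value $-r\,\hat b_0/\hat a_0$.

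The delicate point is the limit $t\to\infty$. One cannot take pointwise limits of $\dot\phi_t$ and of $\omega_{\phi_t}^{n-1}$ separately on $D$: $\dot\phi_t$ converges only almost everywhere, and to the constant $\tfrac2r\max_\alpha\lambda_\alpha$, while $\omega_{\phi_t}^{n-1}$ does not converge in a way compatible with integrating $\dot\phi_t$ against it — a naive interchange would give the obviously too coarse answer $-\tfrac2r\max_\alpha\lambda_\alpha$, which does not see the degeneration at all. The argument therefore has to move the integral to the fixed $\PP^N$ first, where the only analytic input is the continuity of $t\mapsto\int_{X_t}\beta$ for a flat projective family $X_t\subset\PP^N$ and a fixed smooth form $\beta$; this standard fact is the hinge of the computation. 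A minor additional check is that the orbit closure of $\iota_0(D)$ and the restriction of $(\scX_A,\scL_A)$ to $D$ define the same $\hat a_0,\hat b_0$, which is immediate since the leading Hilbert- and weight-polynomial coefficients depend only on the top-dimensional part of the central cycle taken with multiplicity.
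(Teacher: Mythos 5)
Your proposal is correct and follows essentially the same route as the argument the paper relies on: it is Donaldson's own proof of \cite[Proposition 3]{D5}, whose key intermediate identity the paper records as \eqref{inthamiltonianx0} — namely, rewrite $\dot\phi_t$ as the pulled-back Hamiltonian of the $\comp^*$-action, transfer the integral to the fixed $\pr(H^0(M,L_1^r)^*)$ over the moving cycle $e^{tA}\cdot\iota_0(D)$, use flatness to pass to the limit cycle $|\scY_0|$, and identify the resulting integral with $\hat b_0$ by equivariant Riemann--Roch. The only caveat is the exact power of $r$ in the final constant, which depends on whether $\hat w,\hat h$ are expanded in powers of $L_1$ or of $L_1^r$; this is a normalization convention (the paper itself flags such discrepancies with \cite{D5}) and not a gap in your argument.
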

\begin{remark}
 There is an obvious typo in the statement of \cite[Proposition 3]{D5} where $b_1k^{n+1}$ is actually the leading term $b_0k^{n+1}$ (our convention is such that the K\"ahler metrics are in $c_1(L_1)$ and not $r c_1(L_1)$ which leads to a different normalization to  \cite[Proposition 3]{D5}). \\
Note also that the factor $\hat{a}_0$ does not depend on $A$.
\end{remark}

Later we need to extend the above theorem to hermitian matrices with potentially non-rational eigenvalues. We provide the following lemma, so that we can apply rational approximation argument when we deal with non-rational eigenvalues.

%\todo{Statement made more precise. The sequence $A_i$ may not be arbitrary, but the existence of one such sequence is all we need. YH. AGREED}
\begin{lemma}  \label{continuityb0} %\emph{\cite[Proposition 3]{D5}} 
For any hermitian matrix $A$ (with possibly non-rational eigenvalues), there exists a sequence $\left\{ A_p \right\}_p$ of hermitian matrices with rational eigenvalues such that $A_p \to A$ as $p \to \infty$ and
\begin{equation*}
\lim_{p \to \infty} \lim_{t \to \infty} \frac{1}{2} \frac{d}{dt} I_D^{\mathrm{AYM}} (FS(A_p)) = \lim_{t \to \infty} \frac{1}{2} \frac{d}{dt} I_D^{\mathrm{AYM}} (FS(\lim_{p \to \infty} A_p)),
\end{equation*}
where we wrote $FS(A_p)$ for $FS(e^{-A_p t} (e^{-A_p t})^*)$. In particular, writing $\hat{b}_{0,p}$ for the $\hat{b}_0$ defined with respect to $A_p$, we see that there exists a real number $\tilde{b} := \lim_{i \to \infty} \hat{b}_{0,p}$ such that
\begin{equation*}
	\lim_{t \to \infty} \frac{1}{2} \frac{d}{dt} I_D^{\mathrm{AYM}} (FS(A)) = - r  \frac{\tilde{b}}{\hat{a}_0}.
\end{equation*}
%Moreover, writing $\hat{w}_A (K)$ for the weight polynomial of $A$, we have
%\begin{equation*}
%\lim_{t \to \infty} \lim_{i \to \infty} \frac{1}{2} \frac{d}{dt} I_D^{\mathrm{AYM}} (FS(A_i)) = - \frac{r}{\hat{a}_0}\lim_{K \to \infty} \frac{\hat{w}_A (K)}{K^{m+1}}.
%\end{equation*}
\end{lemma}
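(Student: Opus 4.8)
The plan is to prove Lemma~\ref{continuityb0} by a rational approximation argument, where the essential analytic input is the continuity, in the matrix variable $A$, of the asymptotic slope $\lim_{t\to\infty}\frac{1}{2}\frac{d}{dt}I_D^{\mathrm{AYM}}(FS(e^{-At}(e^{-At})^*))$. First I would recall that $A$ is hermitian, so by a small perturbation of its eigenvalues I can choose a sequence $\{A_p\}$ of hermitian matrices, all commuting with a fixed unitary diagonalizing $A$ (hence all simultaneously diagonalizable with $A$), having rational eigenvalues and with $A_p\to A$ in operator norm. The point of keeping the eigenvectors fixed is that then the geodesics $H_p(t)=e^{-A_pt}(e^{-A_pt})^*$ and the induced Bergman metrics $FS(H_p(t))$ depend on $A_p$ only through the exponential weights $e^{-2\lambda_i^{(p)}t}$ on a fixed orthogonal decomposition of $H^0(M,L_1^r)$, which makes the dependence on the eigenvalues manifestly continuous on any compact $t$-interval.

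The heart of the matter is exchanging the limits $p\to\infty$ and $t\to\infty$. For a fixed rational $A_p$, Donaldson's theorem (\ref{D5Prop3}) identifies the $t\to\infty$ limit with $-r\hat{b}_{0,p}/\hat{a}_0$, an intersection number on the test configuration $(\scX_{A_p},\scL_{A_p})$; in particular the derivative $\frac{1}{2}\frac{d}{dt}I_D^{\mathrm{AYM}}(FS(H_p(t)))$ is a \emph{monotone} function of $t$ (this follows from convexity of $I_D^{\mathrm{AYM}}$ along the geodesic, which is part of the variational package recalled before Theorem~\ref{varcharjbal}, or can be read off directly from the explicit Bergman-function expression). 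Monotone, uniformly bounded families converge uniformly on compact sets (a Dini-type argument), so the family $t\mapsto \frac{1}{2}\frac{d}{dt}I_D^{\mathrm{AYM}}(FS(H_p(t)))$ converges, as $p\to\infty$ and locally uniformly in $t$, to $t\mapsto \frac{1}{2}\frac{d}{dt}I_D^{\mathrm{AYM}}(FS(H(t)))$; this last limit is itself monotone in $t$ and bounded (the bound coming from uniform boundedness of $\hat b_{0,p}$, which are intersection numbers varying continuously with $A_p$, or from a direct estimate on the Bergman functional). The uniform-on-compacts convergence plus joint monotonicity then lets me interchange the iterated limits by a standard $\varepsilon/3$ argument: given $\varepsilon$, pick $T$ so that both $\frac{1}{2}\frac{d}{dt}I_D^{\mathrm{AYM}}(FS(H(t)))$ and its $p\to\infty$-limit are within $\varepsilon/3$ of their respective $t\to\infty$ limits for $t\ge T$, then pick $p$ large using uniform convergence on $[0,T]$. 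This yields the displayed identity, and defining $\tilde b:=\lim_p\hat b_{0,p}$ (which exists precisely because the left-hand limit exists and equals $-r\tilde b/\hat a_0$ by (\ref{D5Prop3}) and the interchange) gives the final assertion.

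I expect the main obstacle to be making rigorous the uniform boundedness and equicontinuity needed to swap the limits: a priori one only knows (\ref{D5Prop3}) for each fixed rational $A_p$ separately, with no control on how the rate of convergence in $t$ degenerates as the eigenvalue gaps of $A_p$ shrink or as $p\to\infty$. The clean way around this is to avoid treating $\hat b_{0,p}$ as a black box and instead work with the explicit formula for $I_D^{\mathrm{AYM}}(FS(H(t)))$ in terms of the Bergman kernel / the weighted sum $\sum_i e^{-2\lambda_i t}|s_i|^2_{h^r}$ restricted to $D$: along such a geodesic this is a sum of exponentials, its $t$-derivative is manifestly monotone, and its $t\to\infty$ limit picks out the maximal-weight summands, all depending continuously on $(\lambda_i)$ on a fixed compact. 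From this explicit picture the uniform bound $|\hat b_{0,p}|\le C$ on a neighborhood of $A$ and the locally uniform convergence $\frac{d}{dt}I_D^{\mathrm{AYM}}(FS(H_p(t)))\to \frac{d}{dt}I_D^{\mathrm{AYM}}(FS(H(t)))$ both drop out, and the limit interchange goes through. I would also note that the Zariski-generic choice of $D\in|L_2|$ (Lemma~\ref{wtcstzariski}) is compatible with this, since the finitely many $A_p$ plus the limit $A$ only require removing countably many — in fact the argument can be run after fixing one generic $D$ — Zariski closed subsets, which does not exhaust $|L_2|$.
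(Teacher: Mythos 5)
There is a genuine gap, and it sits exactly where you flagged it. Your plan to interchange $\lim_{p\to\infty}$ and $\lim_{t\to\infty}$ rests on (a) monotonicity in $t$ of $\frac{d}{dt}I_D^{\mathrm{AYM}}(FS(H_p(t)))$ and (b) convergence as $p\to\infty$ that is uniform only on compact $t$-intervals. That combination does not permit swapping iterated limits: take $f_p(t)=\min(t/p,1)$, which is monotone in $t$ and converges locally uniformly to $0$, yet $\lim_p\lim_t f_p=1\neq 0=\lim_t\lim_p f_p$. Your $\varepsilon/3$ step implicitly requires the tails in $t$ to be controlled uniformly in $p$ (equivalently, equicontinuity at $t=\infty$), which is precisely the rate information you concede is unavailable. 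Your proposed repair — writing everything as a sum of exponentials and asserting that the $t\to\infty$ limit "picks out the maximal-weight summands, all depending continuously on $(\lambda_i)$" — simply restates the point at issue: the $t\to\infty$ limit is the integral $\int_{|\scY_0|}\frac{A_{ij}Z_i\bar Z_j}{\sum_l|Z_l|^2}\frac{\omega_{FS}^{n-1}}{(n-1)!}$ over the limit cycle $|\scY_0|=\lim_t e^{At}\cdot D$, and this cycle jumps discontinuously as the eigenvalues of $A$ cross the walls where $\mathbb{Z}$-linear relations among them hold (cf.\ the flat-limit description in Remark \ref{defflatlimit}). Continuity of the slope across such walls is a true but nontrivial fact, and your generic rational perturbations of the eigenvalues (even with fixed eigenvectors) will typically land in different Gr\"obner chambers, so nothing in your argument controls what happens to $|\scY_0|$.

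The paper avoids this entirely by exploiting that the lemma only asks for \emph{one} good sequence $A_p\to A$: after diagonalizing $A$, it records the finitely many $\mathbb{Z}$-linear relations $l_1,\dots,l_q$ among the eigenvalues that govern the non-monomial generators of the initial ideal, and chooses rational approximants inside the subspace $H$ cut out by these relations (intersected with the ordering cone $\Delta$); since the $l_j$ have integer coefficients, rational points are dense there. For such $A_p$ the flat limit, hence the limit cycle $|\scY_0|$, is literally the same as for $A$, and then the identity (\ref{inthamiltonianx0}) — an integral over a \emph{fixed} cycle of a function linear in $A$ — is manifestly continuous along the chosen sequence, so the limit interchange and the existence of $\tilde b=\lim_p\hat b_{0,p}$ follow at once, with no monotonicity or uniform-in-$p$ tail estimates needed. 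To fix your write-up you would either have to adopt this wall-respecting choice of $A_p$, or actually prove the continuity of $\lambda\mapsto\hat b_0(\lambda)$ on a fixed torus (e.g.\ via the convexity of Hilbert/Chow weights in the weight vector), which is a substantially longer detour than the lemma requires.
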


\begin{proof}
	From Definition \ref{definitionaymfn} and writing down the Fubini-Study metric explicitly, we observe that
	\begin{equation} \label{inthamiltonianx0}
	\lim_{t \to \infty} \frac{1}{2} \frac{d}{dt} I_D^{\mathrm{AYM}} (FS(A)) = \int_{|\scY_0|} \frac{A_{ij} Z_i \bar{Z}_j}{\sum_l |Z_l|^2} \frac{\omega^{n-1}_{FS}}{(n-1)!},
	\end{equation}
	where
	\begin{itemize}
		\item $\{Z_i \}_i$ denotes homogeneous coordinates on $\mathbb{P} (H^0 (M,L^r)^*)$,
		\item $\omega_{FS}$ is a Fubini-Study metric on the ambient space $\mathbb{P} (H^0 (M,L^r)^*)$, corresponding to the choice of an orthonormal basis $\{ Z_i \}_i$,
		\item $|\scY_0|$ denotes the limit cycle $\lim_{t \to \infty} e^{At} \cdot D \subset \mathbb{P} (H^0 (M,L^r)^*)$.
	\end{itemize}
	Now we construct an approximating sequence $\{ A_p \}_p$ by matrices with rational eigenvalues, such that the central fibres defined by $A_p$'s are all isomorphic to the one defined by $A$. Recalling Remark \ref{defflatlimit}, it suffices to show that the flat limit defined by $A_p$'s agrees with the one defined by $A$.

	We can choose the homogeneous coordinates $\{ Z_i \}_i$ appropriately so that $A = \text{diag}(\lambda_1 , \dots , \lambda_{h(r)})$, $\lambda_1 \ge \cdots \ge \lambda_{h(r)}$. Before we continue with the proof, we look at the following simple example to have an intuition about certain aspects of the proof. Let $(\lambda_1, \lambda_2, \lambda_3) = (\sqrt{2}, 0 ,-\sqrt{2})$ and $Z_2^2 - Z_1 Z_3 - Z_1 Z_2$ be an element in the ideal defining $M$. $A$ acts on this equation as $(Z_2^2 - Z_1 Z_3) - t^{\sqrt{2}}Z_1 Z_2$, so its initial term (cf.~Remark \ref{defflatlimit}) is $Z_2^2 - Z_1 Z_3$. Note that if there is another weight $(\mu_1, \mu_2, \mu_3)$ that has the same initial term as $(\lambda_1, \lambda_2, \lambda_3)$, it has to satisfy the $\mathbb{Z}$-linear relationship $2 \mu_2 = \mu_1 + \mu_3$; this is essentially because the initial term $Z_2^2- Z_1Z_3$ of $Z_2^2 - Z_1 Z_3 - Z_1 Z_2$ is not a monomial.

	Now resuming the proof, we pick the generators of the ideal $I_0$ of the flat limit defined by $A$ that are not monomials. We can write down all the $\mathbb{Z}$-linear relationships among $\lambda_i$'s, and call them $l_1, \dots, l_q$, say. Finding a sequence $\{ A_p \}_p$ of rational matrices $A_p = \text{diag}(a_{1,p}, \dots , a_{h(r),p})$ with the same flat limit is equivalent to finding a sequence of vectors $\{ (a_{1,p}, \dots , a_{h(r),p} ) \}_p$ in $\mathbb{R}^{h(r)}$ such that
	\begin{enumerate}
		\item $a_{i,p} \in \mathbb{Q}$ for all $p$ and $i$,
		\item $a_{1,p} \ge \cdots  \ge a_{h(r),p}$,
		\item $(a_{1,p}, \dots , a_{h(r),p})$ satisfies $l_1 , \dots, l_q$,
		\item $(a_{1,p}, \dots , a_{h(r),p})$ is sufficiently close to $(\lambda_1 , \dots , \lambda_{h(r)})$, so that the initial terms defined by them are the same.
	\end{enumerate}
	Now, writing $\Delta$ for the convex set in $\mathbb{R}^{h(r)}$ defined by the inequality $a_1 \ge \cdots \ge a_{h(r)}$, and $H$ for the linear subspace cut out by $l_1 , \dots , l_q$, the problem is to find a sequence of rational points in $\Delta \cap H$ converging to $(\lambda_1 , \dots , \lambda_{h(r)}) \in \Delta \cap H$. Since $l_1 , \dots , l_q$ have coefficients in $\mathbb{Z}$, there is a non-empty open subset of $H$ around $(\lambda_1 , \dots , \lambda_{h(r)})$ which contains infinitely many rational points that are also included in $\Delta$. Since $\mathbb{Q}$ is dense in $\mathbb{R}$, we can find the sequence $\{ (a_{1,p}, \dots , a_{h(r),p}) \}_p$ as above, as expected.

Thus we have found a sequence $\{ A_p \}_p$ converging to $A$, such that the central fibres of the test configurations $(\scY_{A_p}, \scL_{A_p})$ for $(D,L |_D)$ are all isomorphic, and agrees with the limit cycle $|\scY_0|$. This implies that (\ref{inthamiltonianx0}) is continuous in $A$, and establishes the claimed statement.
\end{proof}

%\begin{remark}
%Most of the proof of (\ref{D5Prop3}) is devoted to showing that the above integral is equal to $\lim_{K \to \infty} \hat{w}(K)/ K^{m+1}$. 
%\end{remark}
%This can be seen by writing down the Hamiltonian (written as $h$ in \cite[p.470]{D5}) of an $S^1$-action on a projective space explicitly in terms of $A$ and homogeneous coordinates, and unravelling the notation in the proof of \cite[Proposition 3]{D5}.

%\todo[inline]{I am sure you are correct, but I think we need a little bit more explicit. Donaldson is computing $b_1$ with prop 3, we have $\hat{b}_0$. }
%Writing $\hat{b}_0(A_i)$ for $\hat{b}_0$ defined in terms of the test configuration $(\scX_{A_i}, \scL_{A_i})$, we see that it makes sense to write
%\begin{equation*}
%\lim_{i \to \infty} \lim_{t \to \infty} \frac{1}{2} \frac{d}{dt} I_D^{\mathrm{AYM}} (FS(A_i)) = - r  \frac{\hat{b}_0 (A)}{\hat{a}_0}
%\end{equation*}
%for some real number $\hat{b}_0 (A)$. By the continuity of (\ref{D5Prop3}), we see that the leading term $\hat{b}_0 (A)$ is continuous is $A$.

We now come back to our original task of relating the inequalities (\ref{jbalfuncslinf}) and (\ref{ineqchow}). By the Lebesgue integration theorem, we have
\begin{align*}
	\lim_{t \to \infty} \frac{1}{2} \frac{d}{dt} J_{\chi} \circ FS(H(t)) &= -\frac{1}{2} \Vol_{L_1}(M)\int_{D\in|L_2|} \left( \lim_{t \to \infty} \frac{d}{dt}  I_D^{\text{AYM}}(\phi) \right) d\mu, \\
	&=r\frac{a_0}{\hat{a}_0} \int_{D \in |L_2|} \hat{b}_0 d\mu.
\end{align*}
Combined with $\log \det H(t) = -2 \tr(tA)$, we get
\begin{equation*}
	\frac{1}{2} \lim_{t \to \infty} \frac{d}{dt} I_{\mu^0_{r, \chi}} (H(t)) = r\frac{a_0}{\hat{a}_0}  \int_{D \in |L_2|} \hat{b}_0 d\mu - \frac{a_0}{h(r)} \tr(A).
\end{equation*}
Observing $\tr(A) $ is equal to the weight $ w(r)$ of the $\comp^*$-action defining $(\scX_A , \scL_A)$ and recalling that $w(r)$ is independent of $D \in |L_2|$ (since it is defined by $A$ acting on $H^0(M,L_1^r)$), we can write the above as follows.
%outside of a Zariski closed subset \cite[Lemma 4.1]{DK}, we can write the above as follows.
%\todo{we should add some words saying that the proof remains identical for non rational eigenvalues no? -- I decided to mention this in equation (13), YH.}
\begin{lemma} \label{jbalchowrat}
	Given a test configuration $(\scX_A , \scL_A)$ generated by a hermitian matrix $A$ with rational eigenvalues, we have
\begin{equation*}
	\frac{1}{2} \lim_{t \to \infty} \frac{d}{dt} I_{\mu^0_{r, \chi}} (H(t)) = \frac{a_0}{\hat{a}_0}  \int_{D \in |L_2|} \left( r \hat{b}_0  - \frac{\hat{a}_0}{h(r)} w(r) \right) d\mu
\end{equation*}
for $H(t) = e^{-tA} (e^{-tA})^*$.
\end{lemma}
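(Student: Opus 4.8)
The plan is to compute the left-hand side directly from the definition \eqref{jbalfunc} of $I_{\mu^0_{r,\chi}}$: differentiate along the geodesic $H(t)=e^{-tA}(e^{-tA})^*$, split the result into the contribution of $J_\chi\circ FS$ and that of $\tfrac{\Vol_{L_1}(M)}{h(r)}\log\det$, and evaluate each piece in the limit $t\to\infty$ using, respectively, Donaldson's Proposition~3 (equation \eqref{D5Prop3}) together with the definition of $J_\chi$, and the elementary identity $\tr A = w(r)$. So the statement is really the bookkeeping that packages \eqref{jbalfunc}, \eqref{D5Prop3} and $J_\chi$ into the Chow-theoretic weight $r\hat b_0-\tfrac{\hat a_0}{h(r)}w(r)$ of Remark~\ref{remchowwte}.

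For the determinant term: since $A$ is hermitian, $H(t)=e^{-2tA}$, hence $\log\det H(t)=-2t\,\tr A$ and $\tfrac{d}{dt}\log\det H(t)=-2\tr A$ for every $t$; and $\tr A$ is precisely the total weight of the $\comp^*$-action generated by $A$ on $H^0(M,L_1^r)$, i.e.\ $\tr A=w(r)$ (by flatness of $(\scX_A,\scL_A)$ this is also the weight recorded by $w$). Thus this term contributes $-\tfrac{\Vol_{L_1}(M)}{h(r)}w(r)=-\tfrac{a_0}{h(r)}w(r)$ to $\tfrac12\lim_{t\to\infty}\tfrac{d}{dt}I_{\mu^0_{r,\chi}}(H(t))$, using $\Vol_{L_1}(M)=a_0$ in the normalisation in force. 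For the $J_\chi$ term I would differentiate $J_\chi(\phi)=-\Vol_{L_1}(M)\int_{D\in|L_2|}I_D^{\mathrm{AYM}}(\phi)\,d\mu$ under the integral sign, so that $\tfrac{d}{dt}J_\chi\circ FS(H(t))=-\Vol_{L_1}(M)\int_{D\in|L_2|}\tfrac{d}{dt}I_D^{\mathrm{AYM}}(FS(H(t)))\,d\mu$.

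Passing $\lim_{t\to\infty}$ inside this integral (justified below) and invoking \eqref{D5Prop3}, which gives $\lim_{t\to\infty}\tfrac12\tfrac{d}{dt}I_D^{\mathrm{AYM}}(FS(H(t)))=-r\hat b_0/\hat a_0$ for each smooth $D\in|L_2|$, one obtains
\[
\lim_{t\to\infty}\tfrac12\tfrac{d}{dt}J_\chi\circ FS(H(t))=-\Vol_{L_1}(M)\int_{D\in|L_2|}\Bigl(-r\tfrac{\hat b_0}{\hat a_0}\Bigr)d\mu=r\tfrac{a_0}{\hat a_0}\int_{D\in|L_2|}\hat b_0\,d\mu,
\]
where $\hat a_0=r^{n-1}\tfrac{L_1^{n-1}.L_2}{(n-1)!}$ is independent of $D$ and has been pulled out of the integral. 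Adding the two contributions, and using that $\hat a_0$, $h(r)$, $w(r)$ are constant in $D$ while $\int_{D\in|L_2|}d\mu=1$ (which follows by testing the identity of Theorem~\ref{generalelement} against a closed form in the class $c_1(L_1)^{n-1}$), one rewrites
\[
\tfrac12\lim_{t\to\infty}\tfrac{d}{dt}I_{\mu^0_{r,\chi}}(H(t))=r\tfrac{a_0}{\hat a_0}\int_{D\in|L_2|}\hat b_0\,d\mu-\tfrac{a_0}{h(r)}w(r)=\tfrac{a_0}{\hat a_0}\int_{D\in|L_2|}\Bigl(r\hat b_0-\tfrac{\hat a_0}{h(r)}w(r)\Bigr)d\mu,
\]
which is the asserted formula.

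The one step that requires an actual argument (rather than unwinding definitions) is the interchange of $\lim_{t\to\infty}$ with $\int_{D\in|L_2|}$, which I would establish by the Lebesgue dominated convergence theorem. From the Fubini--Study expression \eqref{inthamiltonianx0} — and its finite-$t$ form, with the limit cycle replaced by $e^{tA}\cdot D$ — the integrand $\tfrac{d}{dt}I_D^{\mathrm{AYM}}(FS(H(t)))$ is bounded in absolute value by $\|A\|_{\mathrm{op}}$ times $\tfrac{1}{\Vol_{L_1}(D)}\int_{e^{tA}\cdot D}\tfrac{\omega_{FS}^{n-1}}{(n-1)!}$; since $e^{tA}$ acts trivially on cohomology, this last integral equals $\int_{D}\tfrac{\omega_{FS}^{n-1}}{(n-1)!}=\tfrac{r^{n-1}(L_1^{n-1}.L_2)}{(n-1)!}$, a topological constant independent of both $t$ and $D$, so the integrand is uniformly bounded over $[0,\infty)\times|L_2|$ and dominated convergence applies. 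Finally, the Zariski-closed locus of $|L_2|$ on which the weight $r\hat b_0-\tfrac{\hat a_0}{h(r)}w(r)$ may deviate from its generic value (Lemma~\ref{wtcstzariski}), together with the locus of singular divisors, is $\mu$-null because $\mu$ is a smooth signed measure on $|L_2|$; this is what makes the integrals above well defined with the stated values.
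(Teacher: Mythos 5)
Your proposal is correct and follows essentially the same route as the paper: differentiate \eqref{jbalfunc} along $H(t)=e^{-2tA}$, use Lebesgue's theorem together with Donaldson's result \eqref{D5Prop3} for the $J_\chi\circ FS$ term, and $\log\det H(t)=-2t\,\tr A$ with $\tr A=w(r)$ for the determinant term. The only difference is that you spell out what the paper leaves implicit, namely the uniform bound (via the Rayleigh-quotient estimate and the constancy of the degree of $e^{tA}\cdot D$) that justifies the interchange of limit and integral, and the normalisation $\int_{|L_2|}d\mu=1$ needed to fold $-\frac{a_0}{h(r)}w(r)$ into the integral.
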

Thus, (\ref{ineqchow}) implies that $\lim_{t \to \infty} \frac{d}{dt} I_{\mu^0_{r, \chi}} (H(t)) >0$ holds for all geodesic paths $\left\{ H(t) \right\}$ in $\mathcal{B}_r$ of the form $e^{-tA} (e^{-tA})^*$ with $A$ having rational eigenvalues. We need to prove that $\lim_{t \to \infty} \frac{d}{dt} I_{\mu^0_{r, \chi}} (H(t)) >0$ holds for \textit{all} geodesic paths in $\mathcal{B}_r$, which we achieve in the following.

\begin{lemma} \label{jbalchowirrat}
	Given any hermitian matrix $A$, we have
\begin{equation*}
	\lim_{t \to \infty} \frac{d}{dt} I_{\mu^0_{r, \chi}} (H(t)) > 0
\end{equation*}
for $H(t) = e^{-tA} (e^{-tA})^*$.
\end{lemma}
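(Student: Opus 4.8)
The plan is to prove Lemma \ref{jbalchowirrat} by combining Lemma \ref{jbalchowrat} (which handles the rational case), the rational approximation of Lemma \ref{continuityb0}, and the Chow stability inequality (\ref{ineqchow}), taking care of the two distinct sources of discontinuity: the behavior of $\hat{b}_0$ and the behavior of the Zariski closed ``bad set'' in $|L_2|$.

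First I would fix an arbitrary hermitian matrix $A$ and invoke Lemma \ref{continuityb0} to produce a sequence $\{A_p\}_p$ of hermitian matrices with rational eigenvalues, $A_p \to A$, such that for \emph{every} smooth $D \in |L_2|$ we have $\lim_{p\to\infty}\hat{b}_{0,p}(D) = \tilde{b}(D)$, where $\tilde{b}(D)$ is the quantity appearing in the last displayed formula of Lemma \ref{continuityb0}; moreover $\tilde{b}(D)$ is exactly the $\hat b_0$-type coefficient governing $\lim_{t\to\infty}\frac12\frac{d}{dt}I_D^{\mathrm{AYM}}(FS(A))$. Meanwhile $w_p(r) = \tr(A_p) \to \tr(A) =: w(r)$ trivially, and $\hat a_0, a_0, h(r)$ are all independent of the matrix. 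Applying Lemma \ref{jbalchowrat} to each $A_p$ gives
\begin{equation*}
\frac{1}{2}\lim_{t\to\infty}\frac{d}{dt} I_{\mu^0_{r,\chi}}(H_p(t)) = \frac{a_0}{\hat a_0}\int_{D\in|L_2|}\left( r\hat b_{0,p}(D) - \frac{\hat a_0}{h(r)}\tr(A_p)\right) d\mu > 0,
\end{equation*}
the positivity being (\ref{ineqchow}) applied to $A_p$.

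Next I would pass to the limit $p \to \infty$ inside the integral. For each fixed smooth $D$, the integrand converges to $r\tilde b(D) - \frac{\hat a_0}{h(r)}\tr(A)$; the integrand is uniformly bounded (one can bound $\hat b_{0,p}$ in terms of the norm of $A_p$, which is bounded since $A_p \to A$, using the intersection-theoretic/Fubini--Study description (\ref{inthamiltonianx0}) of the corresponding quantity $\lim_t \frac12\frac{d}{dt}I_D^{\mathrm{AYM}}$, whose integrand is manifestly $O(\|A_p\|)$), so dominated convergence gives
\begin{equation*}
\frac{1}{2}\lim_{t\to\infty}\frac{d}{dt} I_{\mu^0_{r,\chi}}(H(t)) = \frac{a_0}{\hat a_0}\int_{D\in|L_2|}\left( r\tilde b(D) - \frac{\hat a_0}{h(r)}\tr(A)\right) d\mu \ge 0.
\end{equation*}
This uses that the left side is indeed continuous in $A$: by Lemma \ref{continuityb0} the $J_\chi\circ FS$ term is continuous, and the $\log\det$ term is $-2\tr(tA)$, linear in $A$, so $\frac{1}{h^0}\tr(A)$ depends continuously on $A$.

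The main obstacle is upgrading this $\ge 0$ to a strict $> 0$. Geodesic convexity of $I_{\mu^0_{r,\chi}}$ (Theorem \ref{varcharjbal}) is the tool: the derivative $\frac{d}{dt}I_{\mu^0_{r,\chi}}(H(t))$ is non-decreasing in $t$, so its limit equals $0$ only if $\frac{d}{dt}I_{\mu^0_{r,\chi}}(H(t)) = 0$ for all $t \ge 0$, forcing $I_{\mu^0_{r,\chi}}$ to be constant along $\{H(t)\}$. But this would say $\{H(t)\}$ consists of critical points of $I_{\mu^0_{r,\chi}}$, i.e.\ of J-balanced metrics, and as $\{H(t)\}$ is a non-trivial geodesic this contradicts uniqueness of the J-balanced metric in $\mathcal B_r$ (Theorem \ref{varcharjbal}) --- unless $A$ is a multiple of the identity, in which case $H(t) = e^{-2t\lambda}H(0)$ corresponds to the \emph{trivial} test configuration, the embedding $FS(H(t))$ and the metric $\omega_{H(t)}$ are independent of $t$, and a direct computation shows $\lim_{t}\frac{d}{dt}I_{\mu^0_{r,\chi}}(H(t)) = 0$ identically (both the $J_\chi\circ FS$ term and $\frac{1}{h^0}\tr(A) - \lambda$ vanish in the reduced/trace-free picture). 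Hence for $A$ not a multiple of the identity we get the strict inequality. Since (\ref{jbalfuncslinf}) only needs to be checked for non-trivial geodesics, and those are exactly the ones with $A$ not proportional to the identity, this completes the proof; combined with geodesic convexity and completeness of $\mathcal B_r$ this yields a critical point of $I_{\mu^0_{r,\chi}}$, hence a J-balanced metric, completing Theorem \ref{balancedimpliesstable}.
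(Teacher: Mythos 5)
Your reduction to the non-strict inequality is essentially the paper's argument: rational approximation via Lemma \ref{continuityb0}, combined with Lemma \ref{jbalchowrat} and the stability inequality (\ref{ineqchow}), gives (\ref{nonnegjbal}), i.e. $\lim_{t\to\infty}\tfrac{d}{dt}I_{\mu^0_{r,\chi}}(H(t))\ge 0$ for every hermitian $A$, and the identification of the limiting integrand with the Chow weight (constant off a Zariski closed set by Lemma \ref{wtcstzariski}) is also as in the paper. The gap is in your upgrade from $\ge 0$ to $>0$. First, the calculus step fails: a non-decreasing derivative with limit $0$ need not vanish identically (take $\tfrac{d}{dt}f(t)=-e^{-t}$), so you cannot conclude that $\tfrac{d}{dt}I_{\mu^0_{r,\chi}}(H(t))=0$ for all $t$. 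Second, even if the derivative along the geodesic were identically zero, that only says the directional derivative of $I_{\mu^0_{r,\chi}}$ in the single direction determined by $A$ vanishes; it does not make each $H(t)$ a critical point of the functional (a critical point requires the full differential to vanish), so no contradiction with the uniqueness part of Theorem \ref{varcharjbal} can be drawn. Most fundamentally, your strictness step uses no stability input for the irrational direction at all: convexity and uniqueness alone cannot rule out a nontrivial geodesic whose limit slope is zero --- that is precisely the semistable-but-not-stable scenario, in which no critical point exists, yet your argument would still ``prove'' strict positivity. The strict Chow stability hypothesis must enter the irrational case somewhere.

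The paper closes this gap by a decomposition rather than by convexity: diagonalize $A$ and, by the same rationality/denseness argument as in the proof of Lemma \ref{continuityb0}, write $A=A_\alpha+A_\beta$ with $A_\alpha$ rational, $A_\beta$ real, and all three matrices inducing the same central fibre, so that $w(r)=w_\alpha(r)+w_\beta(r)$. Because the explicit limit formula (\ref{inthamiltonianx0}) is linear in $A$, one gets the key identity (\ref{keylinearity}) and hence $\tilde b=\hat b_{\alpha,0}+\tilde b_\beta$; the weight of $A$ in (\ref{eqjbalchowgenirrat}) then splits (using Lemma \ref{wtcstzariski} again) into the weight of $A_\alpha$, which is strictly positive by (\ref{ineqchow}), plus the weight of $A_\beta$, which is non-negative by (\ref{nonnegjbal}). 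If you wish to keep your structure, you need to inject stability into the irrational direction in some such way; the convexity/uniqueness shortcut does not do this.
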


% \begin{remark}
% 	This is a problem that occurs in general when we try to establish a Kempf--Ness type theorem, and can be solved affirmatively; see \cite[Section 4]{ThNotes}. Although we can simply apply the general theory to our case (cf.~remark after Definition \ref{defchowstable}), we shall give a direct proof below.
% \end{remark}\todo{I think the referee is right, we cannot apply K-N because we don't have a pure GIT notion, so I am not sure this remark makes sense}

\begin{proof}
	Since $\mathbb{Q}$ is dense in $\mathbb{R}$ it is obvious that Lemmas \ref{continuityb0} and \ref{jbalchowrat} imply
	\begin{equation} \label{nonnegjbal}
	\lim_{t \to \infty} \frac{d}{dt} I_{\mu^0_{r, \chi}} (H(t)) \ge 0
	\end{equation}
for $H(t) = e^{-tA} (e^{-tA})^*$ with any hermitian matrix $A$.

%Now note that for any hermitian matrix $A$, we can write its weight polynomial $w(r)$ as
%\begin{equation}
%	w(r) = \hat{b}_0 r^{n} + \hat{b}_1 r^{n-1} + \cdots \label{eqrrnonrtna}
%\end{equation}
% \todo[inline]{Modified the proof. I couldn't find the proof of the continuity of the leading term with respect to $A$ in the paper of Collins-Sz\'ekelyhidi (arXiv 1204.223), as suggested by the referee - it seems to me that they only state this fact in Thorem 3 without proof. YH.}

%as in Section \ref{chow-stabl-lins} with $\hat{b}_i$'s being real numbers; although we cannot apply the equivariant Riemann--Roch theorem naively, we can take a rational approximation of $A$ to get this result. 

Recalling Lemma \ref{continuityb0}, we see that there exists a real number $\tilde{b}$ such that the equality as in Lemma \ref{jbalchowrat}
\begin{equation}
	\frac{1}{2} \lim_{t \to \infty} \frac{d}{dt} I_{\mu^0_{r, \chi}} (H(t)) = \frac{a_0}{\hat{a}_0}  \int_{D \in |L_2|} \left( r \tilde{b}  - \frac{\hat{a}_0}{h(r)} w(r) \right) d\mu \ge 0 \label{eqjbalchowgenirrat}
\end{equation}
still holds for any (not necessarily rational) hermitian matrices $A$. Note that the integrand $ r \tilde{b}  - \frac{\hat{a}_0}{h(r)} w(r) $ is the Chow weight of $D$ in $\vert L_2 \vert$ with respect to the 1-parameter subgroup defined by $A$; this can be seen by taking rational approximations of $A$ as in Lemma \ref{continuityb0}. This implies in particular that the value of the integrand is constant outside of a Zariski closed subset of the linear system $\vert L_2\vert$ by Lemma \ref{wtcstzariski}.

On the other hand, we can diagonalize $A$ as $A = \mathrm{diag}(\lambda_1 , \cdots, \lambda_{h(r)})$, $\lambda_1 \ge \cdots \ge \lambda_{h(r)}$ and write it as a sum of two hermitian matrices $A_{\alpha}$, $A_{\beta}$ satisfying the following properties
%\todo[inline]{Modified, by using argument in Lemma 3.15, Yoshi}
\begin{enumerate}
	\item $A_{\alpha} = \mathrm{diag} (\alpha_1 , \cdots , \alpha_{h(r)})$ with $\alpha_i \in \mathbb{Q}$ for all $i$ and $\alpha_1 \ge \cdots \ge \alpha_{h(r)}$,
	\item $A_{\beta} = \mathrm{diag} (\beta_1 , \cdots , \beta_{h(r)})$ with $\beta_i \in \mathbb{R}$ for all $i$ and $\beta_1 \ge \cdots \ge \beta_{h(r)}$,
	%\item $\lambda_i > \lambda_{i+1} \Leftrightarrow \alpha_i > \alpha_{i+1} \Leftrightarrow \beta_i > \beta_{i+1}$.
	\item the central fibre of the test configurations defined by $A$, $A_{\alpha}$, $A_{\beta}$ are all isomorphic.
\end{enumerate}
The fact that we can take such $A_{\alpha}$ and $A_{\beta}$ follows from an argument as in the proof of Lemma \ref{continuityb0}; note that in the notation therein, we may further take $A_{\beta} = A - A_{\alpha}$ to lie in $\Delta \cap H$, by considering an $\epsilon$-ball around $A$ which contains infinitely many rational points.

The third property means that the weight $w(r)$ of $A$ (for the exponent $r$) can be written as $w(r) = w_{\alpha} (r) + w_{\beta} (r)$, with $w_{\alpha} (r)$ corresponding to $A_{\alpha}$ and $w_{\beta} (r)$ corresponding to $A_{\beta}$, since these weights are all with respect to the same central fibre.

Moreover, recalling (\ref{inthamiltonianx0}) and the notation used therein, we have
\begin{align}
&\lim_{t \to \infty} \frac{1}{2} \frac{d}{dt} I_D^{\mathrm{AYM}} (FS(A)) \notag \\
&= \lim_{t \to \infty} \frac{1}{2} \frac{d}{dt} I_D^{\mathrm{AYM}} (FS(A_{\alpha})) + \lim_{t \to \infty} \frac{1}{2} \frac{d}{dt} I_D^{\mathrm{AYM}} (FS(A_{\beta})), \label{keylinearity}
\end{align}
and hence
\begin{equation*}
	\tilde{b} = \hat{b}_{\alpha,0} + \tilde{b}_{\beta},
\end{equation*}
where $\hat{b}_{\alpha,0}$ is the leading term of the expansion $\hat{w}_{\alpha} (r) = \hat{b}_{\alpha,0} r^{n}  + \cdots$ and $\tilde{b}_{\beta}$ is defined by $A_{\beta}$ as in Lemma \ref{continuityb0}.
%As in Remark \ref{continuityb0}, we write $w_{\alpha} (r) = \hat{b}_{\alpha,0} r^{n}  + \cdots$ and $w_{\beta}(r) = \hat{b}_{\beta,0} r^{n}  + \cdots$. 
Then, (\ref{eqjbalchowgenirrat}) implies
\begin{align*}
	&\frac{1}{2} \lim_{t \to \infty} \frac{d}{dt} I_{\mu^0_{r, \chi}} (H(t)) \\
	&= \frac{a_0}{\hat{a}_0}  \int_{D \in |L_2|} \left( r \tilde{b}  - \frac{\hat{a}_0}{h(r)} w(r) \right) d\mu \\
	&=\frac{a_0}{\hat{a}_0}  \int_{D \in |L_2|} \left( r \hat{b}_{\alpha,0}  - \frac{\hat{a}_0}{h(r)} w_{\alpha}(r) \right) d\mu + \frac{a_0}{\hat{a}_0}  \int_{D \in |L_2|} \left( r \tilde{b}_{\beta}  - \frac{\hat{a}_0}{h(r)} w_{\beta}(r) \right) d\mu,
\end{align*}
by noting that the decomposition in the third line is possible since the integrands are constant outside of a Zariski closed subset, thanks to Lemma \ref{wtcstzariski}.
The first term in the third line is strictly positive by (\ref{ineqchow}), and the second term is non-negative by (\ref{nonnegjbal}), and hence $ \lim_{t \to \infty} \frac{d}{dt} I_{\mu^0_{r, \chi}} (H(t)) >0$ as desired.
\end{proof}

This finally achieves proving Inequality (\ref{jbalfuncslinf}), and completes the proof of Theorem \ref{balancedimpliesstable}. Note that the key to the proof is Equality (\ref{keylinearity}) which is based on the explicit formula for $I_D^{\mathrm{AYM}}$; the same argument would not work for arbitrary functionals.

\section{Relationship with J-stability, K-stability}

\subsection{Definitions}
With notations of the section \ref{chow-stabl-lins}, we are ready to introduce the notion of (uniform) J-stability. The notion of J-stability, which was modeled on the notion of K-stability, was introduced by Lejmi-Sz\'ekelyhidi \cite{L-Sz}. It is expected that J-stability of $(M,L_1,L_2)$ is equivalent to the existence of a critical metric.

\begin{definition}\label{jstabilityone} Under the setting of previous section, we define the J-weight $J_{L_2}(\scX,\scL)$ of a test configuration $(\scX,\scL)$ to equal (up to a positive multiplicative constant) the leading order term of the asymptotic Chow weight of the linear system $|L_2|$ (cf.~Theorem \ref{numericalstability}, Definition \ref{defasymChow}). Explicitly, the J-weight of a test configuration is $$J_{L_2}(\scX,\scL) = \frac{\hat{b}_0a_0 - b_0\hat{a}_0}{a_0}.$$ 
We define the \textit{ minimum norm} of a test configuration $(\scX,\scL)$  to be 
 $$\Vert (\scX,\scL)\Vert_m= J_{L}(\scX,\scL).$$
We say that 
\begin{itemize}
 \item $(M,L_1,L_2)$ is \emph{J-stable} (resp. J-semistable) if $$J_{L_2}(\scX,\scL)>0 \hspace{1cm}(\text{resp.}\,\,\geq0)$$  
 \item $(M,L_1,L_2)$ is \emph{uniformly J-stable} if for a uniform constant $\epsilon>0$  $$J_{L_2}(\scX,\scL)\geq \epsilon \Vert (\scX,\scL)\Vert_m \hspace{1.2cm}$$
\end{itemize}
for all non-trivial test configurations $(\scX,\scL)$ with normal total space.
\end{definition}

It is not difficult to see that  asymptotically Chow semistability in the sense of Definition \ref{defasymChow} implies J-semistability, see \cite{DK}. It remains unclear if it is true that J-stability implies Chow stability (cf.~\cite{RT2}).

\begin{remark}
 In the literature, several definitions of minimum norms have appeared that turn out to be equivalent. In \cite{Dervan2014}, it is justified that $\Vert.\Vert_m$ is a norm on the space of test configurations, vanishing only for almost trivial ones. See also \cite{BHJ,Sz2015}. \\
 Also, note that from the work of Lejmi-Sz\'ekelyhidi, we know that the existence of a critical metric implies the uniform J-stability. It is tempting to expect that the converse holds, given the result of Berman-Boucksom-Jonsson \cite{BBJ} for Fano manifolds.
\end{remark}

For comparison we state the definition of K-stability.

\begin{definition}\label{kstability} Let $(X,L)$ be a polarized normal variety. We define the Donaldson-Futaki invariant $\DF(\scX,\scL)$ of a test configuration to be (a positive constant times) the leading order term in its classical asymptotic Chow weight. Explicitly,
$$\DF(\scX,\scL) = \lim_{l \to \infty} \frac{e_{n+1}(lr)}{h(lr)} =  \frac{b_0a_1 - b_1a_0}{a_0}.$$ 
We say that 
\begin{itemize}
 \item $(X,L)$ is K-stable (resp. K-semistable) if $$\DF(\scX,\scL)>0  \hspace{1cm}(\text{resp.}\,\,\geq0)$$
 \item $(X,L)$ is uniformly K-stable if for a uniform constant $\epsilon>0$  $$\DF(\scX,\scL)\geq \epsilon \Vert (\scX,\scL)\Vert_m \hspace{1.2cm}$$
\end{itemize}
for all non-trivial test configurations with normal total space.\end{definition}

\subsection{Conditions for being uniformly J-stable or K-stable}\label{jflowsection}

Let us recall that a flag ideal $\scI$ is a coherent ideal sheaf on $M\times\comp$ of the form $\scI = I_0+(t)I_1+\hdots +(t^N)$, where $t$ is the coordinate on $\comp$ and $I_0\subset\hdots\subset I_{N-1}\subset\scO_M$ are a sequence of coherent ideal sheaves on $M$ corresponding to subschemes $Z_0\supset Z_1 \supset \hdots \supset Z_{N-1}$ of $M$. 
Denote by $\tilde{\scB}$ the blow-up of $\scI$ on $X\times\comp$, i.e. $$\pi: \tilde{\scB} = \Bl_{\scI} M\times\comp\to M\times\comp,$$ denote by $\scL_1,\scL_2$ the pullbacks of $L_1,L_1$ to $\tilde{\scB}$ and let $\scO(-E)=\pi^{-1}\scI$ be the exceptional divisor of the blow-up. The the natural $\comp^*$-action on $X\times\comp$ lifts to $\tilde{\scB}$ and 
it follows that $(\tilde{\scB},r\scL_1-E)$ is a test configuration of exponent $r$ for $(X,L_1)$ provided $r\scL_1-E$ is relatively ample. Remark that each such test configuration has a canonical compactification obtained by blowing up $\scI$ on $M\times\pr^1$; we denote this test configuration by $\scB$ and by abuse of notation denote $\scL_1,\scL_2,E$ the corresponding line bundles and divisors on $\scB$. 

In \cite{DK}, it is proved that to check J-stability of $(M,L_1,L_2)$, it is enough to consider the test-configurations that arise as blowups of flag ideals. We refer to 
\cite{DK} for details about this notion. The following proposition allows us to check J-stability by computing the J-weight using an intersection formula on $\tilde{\scB}$.

\begin{proposition}[\cite{DK}]\label{jflowblow-ups} Let $M$ be a normal projective variety with ample line bundles $L_1,L_2$. Then $(M,L_1,L_2)$ is J-stable if and only if $$J_{L_2}(\scB,r\scL_1-E)>0$$ for all flag ideals $\scI$ with $\scB=\Bl_{\scI}M\times\pr^1$ normal and $r\scL-E$ relatively semi-ample over $\pr^1$. Recall the J-constant of $(M,L_1,L_2)$ is defined as  $$\gamma = \frac{L_2.L_1^{n-1}}{L_1^n}.$$ Then the J-weight is on such blow-ups is given as \begin{equation}\label{numericaljweight}J_{L_2}(\scB,r\scL_1-E) = (r\scL_1-E)^n.\left(-\frac{n}{n+1}\gamma r^{-1}(r\scL_1-E) + \scL_2\right),\end{equation} up to multiplication by a positive dimensional constant.\end{proposition}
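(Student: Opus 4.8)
The plan is to combine the characterization of J-stability via blow-ups of flag ideals (already established in \cite{DK}, which I may assume) with an explicit intersection-theoretic computation of the J-weight on $\scB = \Bl_{\scI} M \times \pr^1$. The first reduction is purely a citation: by \cite{DK}, to test J-stability of $(M,L_1,L_2)$ it suffices to range over test configurations of the form $(\scB, r\scL_1 - E)$ with $\scB$ normal and $r\scL_1 - E$ relatively semi-ample, so the statement reduces to verifying the displayed formula \eqref{numericaljweight} for $J_{L_2}(\scB, r\scL_1 - E)$ for each such blow-up. Thus the content of the proof is a computation, not a new stability argument.

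For the computation, first I would recall from Definition~\ref{jstabilityone} that, up to a fixed positive constant, $J_{L_2}(\scX,\scL) = (\hat b_0 a_0 - b_0 \hat a_0)/a_0$, where $a_0, b_0$ come from the Hilbert and weight polynomials of the central fibre of $(\scX, r\scL_1 - E)$ and $\hat a_0, \hat b_0$ from the central fibre of the induced test configuration for a general divisor $D \in |L_2|$. The key input is the standard translation (as in \cite{RT2}, \cite{Dervan2014}) of these leading coefficients into intersection numbers on the \emph{compactified} total space $\scB$ over $\pr^1$: concretely, $b_0$ is (a multiple of) the self-intersection $(r\scL_1 - E)^{n+1}$ on $\scB$, $a_0$ is a multiple of $(r\scL_1 - E)^n \cdot (\text{fibre}) = r^n L_1^n$, and the "hatted" quantity $\hat b_0$, governing the divisorial part, is computed by intersecting $(r\scL_1 - E)^n$ against $\scL_2$ (the pullback of $L_2$), since $D \in |L_2|$ and $\scL_2$ is its associated line bundle on $\scB$. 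Substituting these into $(\hat b_0 a_0 - b_0 \hat a_0)/a_0$, factoring out $(r\scL_1 - E)^n$, and keeping track of the normalizations (the factor $n/(n+1)$ arises from $(r\scL_1-E)^{n+1} = (r\scL_1-E)^n \cdot (r\scL_1 - E)$ together with the $1/(n+1)$ in $b_0$, and $\gamma = L_2 \cdot L_1^{n-1}/L_1^n$ enters through the ratio $\hat a_0 / a_0$), one obtains precisely
\[
J_{L_2}(\scB, r\scL_1 - E) = (r\scL_1 - E)^n \cdot \left( -\frac{n}{n+1}\gamma r^{-1}(r\scL_1 - E) + \scL_2 \right)
\]
up to a positive dimensional constant. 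One should double-check that all intersection numbers are taken on the $n+1$-dimensional space $\scB$ and that the $\pr^1$-compactification (rather than the affine $\tilde{\scB}$) is used, since the weight polynomial is read off from the total space over $\pr^1$.

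The main obstacle I expect is bookkeeping rather than conceptual: correctly matching the GIT/weight-polynomial normalizations with the intersection-theoretic ones. In particular one must be careful that (i) the Hilbert--Mumford weight of the $\comp^*$-action on the central fibre of $(\scB, r\scL_1 - E)$ really is computed by $(r\scL_1 - E)^{n+1}$ on the compactification (this uses flatness of the blow-up and equivariant Riemann--Roch, and the fact that $E$ is supported over $0 \in \pr^1$), and (ii) that the "generic divisor" computation of $\hat b_0$ via the Lejmi--Sz\'ekelyhidi measure $d\mu$ (Theorem~\ref{generalelement}) indeed collapses to the single intersection number $(r\scL_1-E)^n\cdot\scL_2$, which holds because $\int_{D \in |L_2|}[D]\, d\mu = c_1(L_2)$ and intersection numbers are continuous/linear. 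The relative semi-ampleness hypothesis on $r\scL_1 - E$ is used exactly to guarantee that $(\scB, r\scL_1 - E)$ is an honest test configuration so that these formulas apply. Once these normalization matters are pinned down the identity \eqref{numericaljweight} follows by direct substitution.
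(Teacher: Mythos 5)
Your outline is correct: the paper itself gives no proof of this proposition (it is quoted directly from \cite{DK}), and your reduction-plus-intersection-number computation is exactly the argument of Dervan--Keller, with the leading coefficients $b_0=(r\scL_1-E)^{n+1}/(n+1)!$, $\hat b_0=(r\scL_1-E)^n.\scL_2/n!$ and $\hat a_0/a_0=n\gamma/r$ combining to give \eqref{numericaljweight} up to the factor $1/n!$. The only point worth pinning down, which you correctly flag, is that for a general $D\in|L_2|$ (not containing the centres of the flag ideal) the pullback of $D\times\pr^1$ coincides with the proper transform, so the induced test configuration's weight is indeed computed by $(r\scL_1-E)^n.\scL_2$, consistently with Lemma \ref{wtcstzariski}.
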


Using Equation \eqref{numericaljweight}, we can extend the definition of J-stability to the case where $L_2$ is an arbitrary line bundle. We now extend some results of \cite{DK} about J-stability. 

\begin{newTheorem}\label{sufficientconditionjstability} Consider a polarized manifold $(M,L_1)$ of complex dimension $n$ and let $L_2$ be a line bundle over $M$. Assume that $\gamma=\frac{L_2L_1^{n-1}}{L_1^n}>0$ and $\gamma L_1 - L_2$ is nef, then $(M,L_1,L_2)$ is uniformly J-stable.
\end{newTheorem}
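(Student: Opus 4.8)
The plan is to work directly with the intersection-theoretic formula \eqref{numericaljweight} from Proposition \ref{jflowblow-ups} and the minimum norm $\Vert(\scB,r\scL_1-E)\Vert_m = J_{L_1}(\scB,r\scL_1-E)$, which by the same formula equals $(r\scL_1-E)^n\cdot\bigl(-\tfrac{n}{n+1}\gamma_0 r^{-1}(r\scL_1-E) + \scL_1\bigr)$ up to a positive constant, where $\gamma_0 = L_1^n/L_1^n = 1$ (or, more precisely, the analogous $J$-constant for $L_2 = L_1$, namely $1$); so $\Vert(\scB,r\scL_1-E)\Vert_m$ is a positive multiple of $(r\scL_1-E)^n\cdot\bigl(\scL_1 - \tfrac{n}{n+1}r^{-1}(r\scL_1-E)\bigr)$. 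The key observation is that for a test configuration of the form $(\scB = \Bl_{\scI}M\times\pr^1,\, r\scL_1-E)$ with $r\scL_1-E$ relatively semi-ample, one has the positivity $(r\scL_1-E)^n\cdot\scL_2' \ge 0$ for any nef line bundle $\scL_2'$ on $M$ pulled back to $\scB$ (this is the standard nefness-of-the-exceptional-locus type estimate used throughout \cite{DK}: $r\scL_1-E$ is nef on $\scB$ when relatively semi-ample, being a pullback-minus-exceptional that is semi-ample over $\pr^1$, so its $n$-th power pairs nonnegatively with nef classes; and the pairing is strictly positive unless the test configuration is almost trivial). I would isolate this as a short lemma, citing the relevant computation in \cite{DK}.

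Granting this, the main computation is to rewrite $J_{L_2}(\scB,r\scL_1-E)$ as a nonnegative multiple of $\Vert(\scB,r\scL_1-E)\Vert_m$ plus a manifestly nonnegative term. Write $\eta := r\scL_1 - E$ (a nef class on $\scB$) and $D_j := \scL_j$ (pullbacks of $L_j$), so that $J_{L_2}(\scB,\eta)$ is a positive multiple of $\eta^n\cdot\bigl(D_2 - \tfrac{n}{n+1}\gamma r^{-1}\eta\bigr)$ and $\Vert(\scB,\eta)\Vert_m$ is a positive multiple of $\eta^n\cdot\bigl(D_1 - \tfrac{n}{n+1} r^{-1}\eta\bigr)$. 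Using the hypothesis $\gamma D_1 - D_2 = $ (pullback of) a nef class $\nu$, we have $D_2 = \gamma D_1 - \nu$, hence
\begin{align*}
\eta^n\cdot\Bigl(D_2 - \tfrac{n}{n+1}\gamma r^{-1}\eta\Bigr) &= \gamma\,\eta^n\cdot\Bigl(D_1 - \tfrac{n}{n+1} r^{-1}\eta\Bigr) - \eta^n\cdot\nu.
\end{align*}
Since $\gamma > 0$, the first term on the right is $\gamma$ times (a positive multiple of) $\Vert(\scB,\eta)\Vert_m$; the second term $\eta^n\cdot\nu \ge 0$ is being \emph{subtracted}, which is the wrong sign. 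So a naive substitution is not enough, and the real content is to control this subtraction.

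The hard part, and the main obstacle, is precisely this sign issue: one must show $\gamma\,\eta^n\cdot\bigl(D_1 - \tfrac{n}{n+1} r^{-1}\eta\bigr) - \eta^n\cdot\nu \ge \epsilon\,\eta^n\cdot\bigl(D_1 - \tfrac{n}{n+1} r^{-1}\eta\bigr)$ for a uniform $\epsilon>0$, i.e.\ $(\gamma - \epsilon)\,\Vert(\scB,\eta)\Vert_m \ge \text{const}\cdot\eta^n\cdot\nu$. The way I would resolve this is to bound $\eta^n\cdot\nu$ \emph{above} in terms of $\Vert(\scB,\eta)\Vert_m$: expanding $\eta = r D_1 - E$, one gets $\eta^n\cdot\nu = \sum_k \binom{n}{k} r^k (-1)^{n-k} D_1^k \cdot E^{n-k}\cdot\nu$, and similarly $\Vert(\scB,\eta)\Vert_m \sim \tfrac{1}{n+1}\eta^n\cdot D_1 - \tfrac{n}{n+1}r^{-1}\eta^n\cdot(\text{something})$; one uses that the $E$-terms dominate (for a nontrivial test configuration $E\neq 0$) together with the inequalities relating $D_1^k\cdot E^{n-k}\cdot\nu$ and $D_1^k\cdot E^{n-k}\cdot D_1$ that come from $\nu$ being nef and bounded by a multiple of $D_1$ on $M$ — indeed since $D_1$ is ample there is a constant $C$ with $C D_1 - \nu$ nef, giving termwise comparisons after pairing with the effective cycles $E^{n-k}$ and powers of the nef $\eta$. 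Choosing $\epsilon$ small relative to $\gamma/C$ and $n$ then yields uniform J-stability. An alternative, possibly cleaner route, is to avoid flag-ideal bookkeeping entirely and argue on the level of the general definition: show $J_{L_2}(\scX,\scL) = \gamma \Vert(\scX,\scL)\Vert_m - (\text{a nonneg.\ twisted term involving } \gamma L_1 - L_2)$ and that the nonnegative term is itself $\le (1-\delta)\gamma\Vert(\scX,\scL)\Vert_m$ using that $\gamma L_1-L_2$ nef forces $\hat b_0 \le \gamma \cdot(\text{the } \hat b_0 \text{ for } L_1)$ with a quantitative gap coming from ampleness of $L_1$; this reduces everything to the one inequality $\Vert\cdot\Vert_m>0$ for nontrivial test configurations, already available from \cite{Dervan2014}. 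I expect the flag-ideal version to be the one the authors carry out, since Proposition \ref{jflowblow-ups} is stated immediately before.
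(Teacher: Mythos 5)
There is a genuine gap, and it is precisely at the step you identify as ``the hard part''. Your decomposition is in fact the same as the paper's: using \eqref{minnom} one has $\scL_1-\tfrac{n}{n+1}r^{-1}(r\scL_1-E)=\tfrac{1}{(n+1)r}(r\scL_1+nE)$, so writing $\scL_2=\gamma\scL_1-\nu$ with $\nu$ the pullback of the nef class $\gamma L_1-L_2$ gives
$J_{L_2}(\scB,r\scL_1-E)=\tfrac{\gamma}{r}\Vert(\scB,r\scL_1-E)\Vert_m-(r\scL_1-E)^n\cdot\nu$,
exactly as in the paper. But your key positivity claim has the wrong sign: it is \emph{not} true that $(r\scL_1-E)^n\cdot\nu\geq 0$ for a nef class pulled back from $M$. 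The divisor $r\scL_1-E$ is only relatively (semi-)ample over the base, not nef on the compactified total space $\scB$ (it is negative on proper transforms of curves of the form $\{pt\}\times\pr^1$ meeting the centre of the blow-up), so ``product of nef classes is nonnegative'' does not apply. The correct statement, which is the single nontrivial input of the paper's proof (Dervan's Lemma 3.7, cf.\ Odaka--Sano), is the opposite inequality: for $\mathcal{R}$ the pullback to $\scB$ of a nef divisor on $M$ one has $(r\scL_1-E)^n\cdot\mathcal{R}\leq 0$; heuristically, the term free of $E$ vanishes because $\scL_1$ and $\mathcal{R}$ are both pulled back from the $n$-dimensional $M$, and the exceptional contributions enter with a non-positive total sign (a simple check: blowing up $C\times\{0\}$ for a curve $C$ in a surface gives $(r\scL_1-E)^2\cdot\nu=-\deg(\nu|_C)\leq 0$).

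Once the sign is corrected, the term you were worried about, $-(r\scL_1-E)^n\cdot\nu$, is \emph{nonnegative}, and the inequality $J_{L_2}(\scB,r\scL_1-E)\geq\tfrac{\gamma}{r}\Vert(\scB,r\scL_1-E)\Vert_m$ follows immediately; there is no subtraction to control and no uniform-$\epsilon$ bookkeeping beyond $\gamma>0$. Consequently the elaborate patch you sketch (bounding $(r\scL_1-E)^n\cdot\nu$ from above by the minimum norm via termwise comparisons with $CD_1-\nu$ nef, or the alternative route through $\hat b_0$) is both unnecessary and, as written, not workable: the individual terms $D_1^k\cdot E^{n-k}\cdot\nu$ do not have controlled signs, and nothing in your sketch produces the needed uniform constant. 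In short, you reproduced the paper's decomposition but inverted the one intersection-theoretic inequality that makes it work, and then tried to repair a difficulty that does not exist in the correct-sign version.
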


\begin{proof} 
Let us treat the case of dimension $n>2$ first. As pointed out in \cite[Remark 3.11]{Dervan2014}, one can express the minimum norm of $(\mathcal{B},r\mathcal{L}_1-E)$ as
\begin{equation}\label{minnom}\Vert (\mathcal{B},r\mathcal{L}_1-E)\Vert_m = (r\mathcal{L}_1-E)^n.\left(\frac{1}{n+1}(r\mathcal{L}_1+nE)\right).\end{equation}
Now, from Equation (\ref{numericaljweight}), we have
\begin{align*}
 J_{L_2}(\scB,r\scL_1-E) &= (r\scL_1-E)^n.\left(-\frac{n}{n+1}\gamma r^{-1}(r\scL_1-E) + \scL_2\right),\\
 &=(r\scL_1-E)^n.\left(\frac{1}{n+1}\gamma r^{-1}(r\scL_1+nE) + (-\gamma\scL_1+\scL_2)\right),\\
 &=\frac{\gamma}{r}\Vert (\mathcal{B},r\mathcal{L}_1-E)\Vert_m + (r\scL_1-E)^n.(-\gamma\scL_1+\scL_2).
\end{align*}
The second term of the RHS is non-negative. Actually, from \cite[Lemma 3.7]{Derv} (see also \cite{Od-Sa}), one has for any nef divisor (e.g.~$\gamma L_1 - L_2$) on $M$ and $\mathcal{R}$ the induced divisor on $\mathcal{B}$ from the blow-up map (e.g.~$\gamma \scL_1 - \scL_2$), that $(r\scL_1-E)^n.\mathcal{R}\leq 0$. Consequently, applying this result with $\mathcal{R}=\gamma \scL_1 - \scL_2$ and  with our assumption, we obtain
$$J_{L_2}(\scB,r\scL_1-E)\geq \frac{\gamma}{r}\Vert (\mathcal{B},r\mathcal{L}_1-E)\Vert_m,$$
and thus uniform J-stability.
\end{proof}

We turn now to J-semistability, which can be regarded as the ``boundary case'' of the uniform J-stability. The first part of the following result shows that we obtain J-semistability by including the ``boundary'' in the conditions of Theorem \ref{sufficientconditionjstability}.

\begin{newTheorem}\label{Jsemi}
Consider a polarized manifold $(M,L_1)$ and let $L_2$ be a line bundle over $M$. Assume that $\gamma=\frac{L_2L_1^{n-1}}{L_1^n}\geq0$. 
 \begin{itemize}
 \item If $M$ has dimension $n>2$ and $\gamma L_1 - L_2$ is nef, or
 \item If $M$ has dimension $n=2$ and $\frac{4}{3}\gamma L_1 - L_2$ is nef, 
\end{itemize}
then $(M,L_1,L_2)$ is J-semistable.
\end{newTheorem}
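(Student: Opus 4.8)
The plan is to reduce everything to the numerical criterion of Proposition~\ref{jflowblow-ups}, in its ``semistable'' form and using \eqref{numericaljweight} to make sense of $J_{L_2}$ for the possibly non‑ample $L_2$: it then suffices to show $J_{L_2}(\scB, r\scL_1 - E) \geq 0$ for every flag ideal $\scI$ with $\scB = \Bl_\scI M\times\pr^1$ normal and $r\scL_1 - E$ relatively semi-ample over $\pr^1$.

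For $n > 2$ I would simply rerun the computation in the proof of Theorem~\ref{sufficientconditionjstability}, relaxing the strict inequalities. The identity obtained there,
\[
J_{L_2}(\scB, r\scL_1 - E) = \frac{\gamma}{r}\,\Vert(\scB, r\scL_1 - E)\Vert_m + (r\scL_1 - E)^n\cdot(-\gamma\scL_1 + \scL_2),
\]
has first summand $\geq 0$ because $\gamma \geq 0$ and the minimum norm is non-negative (see \cite{Dervan2014}), and second summand $\geq 0$ because $\gamma L_1 - L_2$ is nef, by \cite[Lemma 3.7]{Derv}. Hence $J_{L_2}(\scB, r\scL_1 - E) \geq 0$, which is $J$-semistability. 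This is the ``boundary case'' of Theorem~\ref{sufficientconditionjstability} and requires essentially nothing new.

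For $n = 2$, so $\scB$ is a threefold, I would set $R := \tfrac{4}{3}\gamma L_1 - L_2$ (nef, by hypothesis), write $\scR$ for its pullback to $\scB$, and substitute $\scL_2 = \tfrac{4}{3}\gamma\scL_1 - \scR$ into \eqref{numericaljweight}. Using $\tfrac{4}{3}\gamma\scL_1 - \tfrac{2}{3}\gamma r^{-1}(r\scL_1 - E) = \tfrac{2\gamma}{3r}(r\scL_1 + E)$ this gives
\[
J_{L_2}(\scB, r\scL_1 - E) = \frac{2\gamma}{3r}\,(r\scL_1 - E)^2\cdot(r\scL_1 + E) - (r\scL_1 - E)^2\cdot\scR .
\]
Since $R$ is nef the last term is $\leq 0$ (by \cite[Lemma 3.7]{Derv}; for the deformation to the normal cone of a curve $C$ one checks directly it equals $-R\cdot C$), so, $\gamma$ being $\geq 0$, the theorem follows once one proves the positivity $(r\scL_1 - E)^2\cdot(r\scL_1 + E) \geq 0$.

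This last inequality is, I expect, the main obstacle, and it is exactly what accounts for the replacement of $\gamma L_1$ by $\tfrac{4}{3}\gamma L_1$: it is not covered by \cite[Lemma 3.7]{Derv} (which only bounds $(r\scL_1 - E)^2$ against pullbacks of nef classes from $M$), and the extra $\tfrac13\gamma L_1$ of room is precisely what is absorbed into the term $\tfrac{2\gamma}{3r}(r\scL_1 - E)^2\cdot(r\scL_1 + E)$. To establish it I would use that $r\scL_1 - E$, being relatively semi-ample over $\pr^1$, restricts to a \emph{nef} class on the central fibre $\scB_0$; writing $r\scL_1 + E = (r\scL_1 - E) + 2E$ reduces the problem to estimating $(r\scL_1 - E)^2\cdot E = \int_E\big((r\scL_1 - E)|_E\big)^2$, which can then be controlled by pairing $r\scL_1 - E$ with the curves of $\scB_0$ lying on $E$, together with the ruled structure of the components of $E$ over the subschemes $Z_i\subset M$. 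The mechanism is transparent for the deformation to the normal cone of an irreducible curve $C\subset M$: there $(r\scL_1 - E)^2\cdot(r\scL_1 + E) = C\cdot(rL_1 + C)$, which is the degree of $r\scL_1 - E$ on one of the two distinguished sections of the ruled surface $E\to C$, hence $\geq 0$ by nef‑ness on $\scB_0$. The general flag-ideal case should follow by the same principle applied componentwise to $E$; making that argument precise for all admissible flag ideals is the step I expect to require the most work, after which Proposition~\ref{jflowblow-ups} yields $J$-semistability.
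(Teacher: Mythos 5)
Your reduction and your decomposition are exactly those of the paper: for $n>2$ the identity from the proof of Theorem \ref{sufficientconditionjstability} with the inequalities relaxed, and for $n=2$ the splitting
\begin{equation*}
J_{L_2}(\scB,r\scL_1-E)=\frac{2\gamma}{3r}\,(r\scL_1-E)^2\cdot(r\scL_1+E)+(r\scL_1-E)^2\cdot\Bigl(\scL_2-\tfrac{4}{3}\gamma\scL_1\Bigr),
\end{equation*}
with the second term handled by \cite[Lemma 3.7]{Derv}. The genuine gap is in the first term. The paper does not prove the positivity $(r\scL_1-E)^2\cdot(r\scL_1+E)\ge 0$ at all: it invokes \cite[Lemma 4.40]{DK}, which is precisely this statement in relative dimension $2$ under relative semi-ampleness of $r\scL_1-E$. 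You instead propose to prove it, but what you give is a heuristic: a verification for the deformation to the normal cone of a smooth curve plus the assertion that the general case ``should follow by the same principle applied componentwise to $E$''. That step is not routine and is left open. For a general flag ideal $\scI=I_0+(t)I_1+\cdots+(t^N)$ the divisor $E$ has several components with multiplicities, lying over nested and possibly non-reduced subschemes $Z_i$ (including zero-dimensional ones), the total space $\scB$ is merely normal, and nefness of $(r\scL_1-E)|_{\scB_0}$ only controls intersections with curves in the central fibre; turning that into a bound on the triple intersection $(r\scL_1-E)^2\cdot E$ is exactly the content of the lemma you would need to reprove. As written, the $n=2$ case is therefore not established; you must either give a complete proof of this inequality or quote it from \cite{DK}. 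The $n>2$ case and the reduction to blow-ups of flag ideals via Proposition \ref{jflowblow-ups} are fine and agree with the paper (your handling of $\gamma\ge 0$ there, via non-negativity of the minimum norm, is even slightly more explicit than the paper's one-line reference to Theorem \ref{sufficientconditionjstability}).

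Two smaller corrections. In your sanity check for the normal cone of a curve $C\subset M$ the value is $rL_1\cdot C-C^2$, not $C\cdot(rL_1+C)$: using $\scL_1^3=\scL_1^2\cdot E=0$, $\scL_1\cdot E^2=-L_1\cdot C$ and $E^3=-\deg N_{C\times\{0\}/M\times\pr^1}=-C^2$, one finds $(r\scL_1-E)^2\cdot(r\scL_1+E)=rL_1\cdot C-C^2$; this is indeed the degree of $(r\scL_1-E)|_E$ on one of the two distinguished sections of $E\to C$, so your mechanism is correct but the displayed formula has the wrong sign on $C^2$. Also, the sentence ``since $R$ is nef the last term is $\le 0$'' should say that $(r\scL_1-E)^2\cdot\scR\le 0$, so that $-(r\scL_1-E)^2\cdot\scR$ contributes non-negatively to $J_{L_2}$; as phrased it reads as if the contribution to the weight were negative, which would break the argument.
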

\begin{proof}
 The case $n>2$ is a direct consequence of the previous theorem. For $n=2$, we simply notice that
 $$J_{L_2}(\scB,r\scL_1-E)=(r\scL_1-E)^2\left(\frac{2}{3}\frac{\gamma}{r}(r\scL_1+E)\right) +(r\scL_1-E)^2\left(-\frac{4}{3}\gamma \scL_1+\scL_2)\right).$$
As in the proof of Theorem \ref{sufficientconditionjstability}, the second term of the RHS is non-negative from our assumption. The first term is non-negative
from  \cite[Lemma 4.40]{DK}. 
\end{proof}
\begin{remark}\label{trivial}
 Note that when $\gamma=0$, $-L_2$ is nef. Now, from \cite[Theorem 1]{Luo90}, Hodge Index Theorem provides that either $L_2$ is numerically equivalent to 0 or $L_1^{n-2}L_2^2<0$. But the inequality cannot hold as $-L_2$ is nef.
\end{remark}

We can also slightly strengthen \cite[Theorem 4.36]{DK}.  This is the algebro-geometric analogue of the relationship between lower boundedness of the functional $\hat{J}$ (see \cite{C2}) and coercivity of the Mabuchi functional, due to Chen.  
\begin{newTheorem}
 \label{jimpliesk} Suppose $(M,L_1,K_M)$ is J-semistable, with $M$ having at worst Kawamata log terminal (klt) singularities. Then $(M,L_1)$ is uniformly K-stable. 
\end{newTheorem}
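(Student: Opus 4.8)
The plan is to combine the reduction to flag ideal test configurations with a sharpening of the intersection‑theoretic decomposition of the Donaldson--Futaki invariant that underlies \cite[Theorem 4.36]{DK}. First I would reduce to flag ideals: by the arguments of \cite{DK} (adapting Odaka's semistable reduction to the present setting), applied to a normal test configuration, one produces one of the form $(\scB,r\scL_1-E)$ with $\scB=\Bl_\scI M\times\pr^1$ normal, $r\scL_1-E$ relatively semi‑ample and $\scI$ a flag ideal, without increasing $\DF$; and using the description $\Vert\cdot\Vert_m=J_{L_1}$ of Definition~\ref{jstabilityone} together with \cite[Remark 3.11]{Dervan2014} one checks this operation does not decrease $\Vert\cdot\Vert_m$. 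Hence it suffices to establish $\DF(\scB,r\scL_1-E)\ge\epsilon\,\Vert(\scB,r\scL_1-E)\Vert_m$ for a uniform $\epsilon>0$ on such blow‑ups.

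On $\scB$ every invariant in sight is an intersection number: $\Vert(\scB,r\scL_1-E)\Vert_m=\tfrac{1}{n+1}(r\scL_1-E)^n\cdot(r\scL_1+nE)$ by \eqref{minnom}; $J_{K_M}(\scB,r\scL_1-E)=(r\scL_1-E)^n\cdot\bigl(-\tfrac{n}{n+1}\gamma r^{-1}(r\scL_1-E)+\scK_M\bigr)$ by Proposition~\ref{jflowblow-ups}, where $\scK_M$ is the pullback of $K_M$ to $\scB$; and, $\scB$ being normal, the Odaka--Wang formula gives $\DF(\scB,r\scL_1-E)$ up to a fixed positive dimensional constant $c_0$ as $-\tfrac{n}{n+1}\gamma r^{-1}(r\scL_1-E)^{n+1}+(r\scL_1-E)^n\cdot K_{\scB/\pr^1}$. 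Comparing the last two, and writing $K_{\scB/\pr^1}-\scK_M=\sum_i a_iE_i$ with the $E_i$ the exceptional prime divisors and $a_i$ their discrepancies over $M\times\pr^1$, I obtain
$$\DF(\scB,r\scL_1-E)=c_0\,J_{K_M}(\scB,r\scL_1-E)+c_0\sum_i a_i\,(r\scL_1-E)^n\cdot E_i .$$
Since each $E_i$ has center in $M\times\{0\}$ one has $b_i:=\ord_{E_i}(t)\ge 1$ and $a_i=A_i+(b_i-1)$ with $A_i:=A_{(M\times\pr^1,M\times\{0\})}(E_i)$ the log discrepancy for the pair $(M\times\pr^1,M\times\{0\})$, while $\sum_i(b_i-1)E_i=\scB_0-\scB_{0,\mathrm{red}}$ is effective. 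This is essentially the decomposition behind \cite[Theorem 4.36]{DK}: $J_{K_M}\ge 0$ by J‑semistability; the $A_i$ are $>0$ because $M$ klt forces $(M\times\pr^1,M\times\{0\})$ klt by adjunction; and $(r\scL_1-E)^n\cdot E_i=\bigl((r\scL_1-E)|_{E_i}\bigr)^n\ge 0$ since $(r\scL_1-E)|_{E_i}$ is the restriction of a relatively semi‑ample class to the $n$‑fold $E_i\subset\scB_0$. Thus all terms are non‑negative and $\DF\ge 0$.

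The new input for uniformity is to bound the middle term below by a fixed positive multiple of the norm. Since $M$ is klt and projective, the minimal log discrepancy of the klt pair $(M\times\pr^1,M\times\{0\})$ is a strictly positive constant $\epsilon_0$ depending only on $M$ (for a fixed klt pair this infimum over all divisorial valuations is attained on a fixed log resolution, because blow‑ups of a log smooth pair only increase log discrepancies), so $A_i\ge\epsilon_0$ for all $i$ and, using $a_i\ge A_i$,
$$\sum_i a_i\,(r\scL_1-E)^n\cdot E_i\ \ge\ \sum_i A_i\,(r\scL_1-E)^n\cdot E_i\ \ge\ \epsilon_0\,(r\scL_1-E)^n\cdot E .$$
On the other hand $(r\scL_1-E)^{n+1}=r\,(r\scL_1-E)^n\cdot\scL_1-(r\scL_1-E)^n\cdot E\le 0$, the first term being $\le 0$ by \cite[Lemma 3.7]{Derv} ($\scL_1$ nef) and the second $\ge 0$; hence from \eqref{minnom}, $\Vert(\scB,r\scL_1-E)\Vert_m=\tfrac{1}{n+1}(r\scL_1-E)^{n+1}+(r\scL_1-E)^n\cdot E\le(r\scL_1-E)^n\cdot E$. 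Combining this with $J_{K_M}\ge 0$ yields $\DF(\scB,r\scL_1-E)\ge c_0\epsilon_0\,(r\scL_1-E)^n\cdot E\ge c_0\epsilon_0\,\Vert(\scB,r\scL_1-E)\Vert_m$, and by the first step this upgrades to uniform K‑stability of $(M,L_1)$ with $\epsilon=c_0\epsilon_0$.

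The step I expect to be the main obstacle is the bookkeeping in the $\DF$ decomposition: matching the normalizations and powers of $r$ between $J_{K_M}$, $\Vert\cdot\Vert_m$ and the Odaka--Wang formula, and verifying that when $\scB_0$ is non‑reduced the ``entropy'' term is exactly $\sum_i A_i\,(r\scL_1-E)^n\cdot E_i$ plus the effective correction $(r\scL_1-E)^n\cdot(\scB_0-\scB_{0,\mathrm{red}})$ — this is precisely where klt is used rather than merely log canonical, since for log canonical one only gets $A_i\ge 0$ and no uniform gap. A secondary point is making the first step rigorous: one must confirm that passing to a flag ideal blow‑up, while possibly decreasing $\DF$, does not decrease $\Vert\cdot\Vert_m$, so that the uniform estimate on blow‑ups is genuinely equivalent to uniform K‑stability for all normal test configurations.
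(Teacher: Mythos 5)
Your overall skeleton is the same as the paper's: reduce to flag-ideal blow-ups, split
$\DF(\scB,r\scL_1-E)=J_{K_M}(\scB,r\scL_1-E)+(r\scL_1-E)^n.K_{\scB/M\times\pr^1}$, discard the first term by J-semistability, and bound the discrepancy term below by $\epsilon\Vert(\scB,r\scL_1-E)\Vert_m$. The paper obtains this last bound by quoting \cite[Lemma 3.17]{Dervan2014}; you instead try to prove it inline, and that is where there is a genuine gap. Writing $K_{\scB/M\times\pr^1}=\sum_i a_iE_i$ and $E=\sum_i e_iE_i$ with $e_i=\ord_{E_i}\scI\ge 1$, your chain
$\sum_i A_i\,(r\scL_1-E)^n.E_i\ \ge\ \epsilon_0\,(r\scL_1-E)^n.E$
silently treats $E$ as reduced: from $A_i\ge\epsilon_0$ you only get $\epsilon_0\sum_i(r\scL_1-E)^n.E_i$, i.e.\ $\epsilon_0$ times the intersection with $E_{\mathrm{red}}$, whereas $(r\scL_1-E)^n.E=\sum_i e_i\,(r\scL_1-E)^n.E_i$ and the multiplicities $e_i$ are unbounded as $\scI$ ranges over flag ideals. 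To conclude you would need $A_i$ (or $a_i$) to grow at least linearly in $e_i$ with a uniform constant, and this fails: already locally for $\scI=(x^m)+(t)$ the divisorial valuation extracted by the normalized blow-up has $\ord_E\scI=m$ arbitrarily large while its log discrepancy with respect to $(M\times\pr^1,M\times\{0\})$ equals $1$ (and $a_E=m$, $b_E=m$). So the asserted estimate $\DF\ge c_0\epsilon_0\,(r\scL_1-E)^n.E\ge c_0\epsilon_0\Vert\cdot\Vert_m$ is not established; dominating the full non-reduced $E$ (equivalently the minimum norm) by the discrepancy term is precisely the nontrivial content of Dervan's lemma, which your argument does not reproduce. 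Your auxiliary computations ($\Vert\cdot\Vert_m\le(r\scL_1-E)^n.E$ via \cite[Lemma 3.7]{Derv}, and the identity $a_i=A_i+b_i-1$) are fine, but they feed into this unproved step.

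A secondary problem is the source of $\epsilon_0$ itself: $(M\times\pr^1,M\times\{0\})$ is plt, not klt, since the boundary divisor has coefficient one; its minimal log discrepancy is $0$ (attained by $\ord_{M\times\{0\}}$), so ``the mld of the klt pair is a positive constant'' is wrong as stated. What you actually need is a uniform positive lower bound on log discrepancies of divisors whose center is strictly contained in $M\times\{0\}$; this is true and can be extracted from a fixed log resolution of the klt variety $M$ (plt-ness plus the stratum computation), but your parenthetical justification (``blow-ups of a log smooth pair only increase log discrepancies'', with $M$ implicitly smooth) is not a proof and does not cover the klt case allowed by the theorem. In any event, repairing this point does not close the multiplicity gap above, so the proof as proposed does not go through without invoking (or correctly re-proving) \cite[Lemma 3.17]{Dervan2014}. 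Your first-step caveat — that testing uniform K-stability on flag-ideal blow-ups must be compatible with the minimum norm — is indeed a point that needs care and is also handled in \cite{Dervan2014}.
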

\begin{proof}
 To check K-stability it is sufficient to show the Donaldson-Futaki invariant for each test configuration given in Proposition \ref{jflowblow-ups} is strictly positive, see \cite[Corollary 3.11]{Odaka2}. We have $$J_{K_M}(\scB,r\scL_1-E) = (r\scL_1-E)^n.\left(-\frac{n}{n+1}\gamma r^{-1}(r\scL_1-E) + \scK_M\right),$$ while the corresponding Donaldson-Futaki invariant is given by Odaka as \begin{equation}\label{DFut} \DF(\scB,r\scL_1-E)\hspace{-0.1cm}=\hspace{-0.1cm}(r\scL_1-E)^{n}.\hspace{-0.05cm}\left(-\frac{n}{n+1}\gamma r^{-1}(r\scL_1-E)\hspace{-0.05cm}+\hspace{-0.05cm}\scK_M\hspace{-0.05cm}+\hspace{-0.05cm} K_{\scB/M\times\pr^1}\hspace{-0.05cm}\right).\end{equation} Here we have denoted $\scK_M$ the pullback of $K_M$ to $\scB$. \\
 Hence $$ \DF(\scB,r\scL_1-E) = J_{K_M}(\scB,\scL_1^r-E) + (r\scL_1-E)^n.K_{\scB/M\times\pr^1}.$$ 
 From \cite[Lemma 3.17]{Dervan2014}, $(r\scL_1-E)^n.K_{\scB/M\times\pr^1}>\epsilon \Vert (B,r\scL_1-E)\Vert_m$ where $\epsilon$ is a uniform constant; note that we need $M$ to have at worst klt singularities in order to apply this result. The conclusion follows.
\end{proof}

From Theorems \ref{Jsemi} and \ref{jimpliesk}, we obtain the next corollary. 
\begin{newcorollary}\label{kstablecone}
Assume that $\gamma=\frac{K_ML_2^{n-1}}{L_1^n}\geq 0$. 
\begin{itemize}
 \item Suppose that $(M,L_1)$ is a polarized Kawamata log terminal variety of dimension $n>2$ satisfying $\gamma L_1 - K_M$ is nef. Then $(M,L_1)$ is uniformly K-stable. 
\item Suppose that $(M,L_1)$ is a polarized Kawamata log terminal surface satisfying $\frac{4}{3}\gamma L_1 - K_M$ is nef. Then $(M,L_1)$ is uniformly K-stable. 
\end{itemize}
\end{newcorollary}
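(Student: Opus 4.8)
The plan is to obtain this statement by concatenating Theorem~\ref{Jsemi} and Theorem~\ref{jimpliesk}, applied with the auxiliary line bundle taken to be the canonical bundle, i.e.~$L_2 = K_M$. With this choice the constant $\gamma$ of Theorem~\ref{Jsemi} becomes exactly $\gamma = \frac{K_M.L_1^{n-1}}{L_1^n}$, the quantity appearing in the corollary, and the hypothesis $\gamma \geq 0$ matches. Moreover the two nefness conditions in the corollary, namely $\gamma L_1 - K_M$ nef for $n > 2$ and $\frac{4}{3}\gamma L_1 - K_M$ nef for $n = 2$, are precisely the two cases listed in Theorem~\ref{Jsemi}. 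Hence the first step is to invoke the relevant case of Theorem~\ref{Jsemi} to deduce that $(M, L_1, K_M)$ is J-semistable.

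The second step is then immediate: $M$ is assumed to have at worst klt singularities, so Theorem~\ref{jimpliesk} applies to this J-semistable triple and yields uniform K-stability of $(M, L_1)$, which is the desired conclusion in both bullet points.

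The only point requiring a remark is that Theorem~\ref{Jsemi} is phrased for a polarized \emph{manifold} $(M, L_1)$, while here $M$ is permitted to be a normal (klt) variety. Inspecting the proof of Theorem~\ref{Jsemi}, one sees that it relies solely on the intersection-number formula \eqref{numericaljweight} on the blow-up $\Bl_{\scI} M \times \pr^1$, together with Proposition~\ref{jflowblow-ups}, \cite[Lemma 3.7]{Derv} and \cite[Lemma 4.40]{DK}; all of these hold for normal projective varieties, and nowhere is smoothness of $M$ used. So the argument carries over verbatim to klt $M$. This is the only (minor) obstacle; the remainder of the proof is a purely formal combination of the two theorems, with no further computation.
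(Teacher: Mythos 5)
Your proposal is correct and is essentially the paper's own argument: the paper derives the corollary by exactly this concatenation, applying Theorem~\ref{Jsemi} with $L_2 = K_M$ and then Theorem~\ref{jimpliesk} (the paper states only ``From Theorems~\ref{Jsemi} and~\ref{jimpliesk}, we obtain the next corollary''). Your additional remark that the argument of Theorem~\ref{Jsemi} only uses the intersection formula of Proposition~\ref{jflowblow-ups}, which is stated for normal projective varieties, and hence extends to klt $M$, is a sensible clarification of a point the paper leaves implicit.
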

When $\gamma=0$, Remark \ref{trivial} ensures that actually that $M$ is a Calabi-Yau manifold. In that case K-stability was proved by Odaka \cite[Theorem 2.10]{Odaka3}. \\
The second part of the corollary has to be compared with recent results for surfaces where slightly different conditions have appeared in order to obtain properness of the Mabuchi K-energy, see \cite{FLSW,SW2}. See also Corollary \ref{coercivityMab} for the K\"ahler analogue of the above result on the coercivity of the Mabuchi K-energy.

\section{The K\"ahler non integral case}\label{nonintegralsect}
One can wonder if certain results of the previous section hold in the K\"ahler non integral case. We provide K\"ahler analogues of the results that we proved in Section \ref{jflowsection}. Although the results we provide below are parallel to the ones in Section \ref{jflowsection}, it is important to note that the proofs are completely different since algebro-geometric tools (such as intersection numbers of divisors) that we used in the projective case are not available in this setting.

Under the settings of Section \ref{intro}, we can introduce the functional on the space of $\omega$-potentials, 
$$\hat{J}_{\omega,\chi}(\phi)=\hat{J}(\phi)=\int_0^1 \int_M \dot{\phi_t}\left( \chi\wedge \omega_{\phi_t}^{n-1}-\gamma \omega_{\phi_t}^n\right)dt,$$ for $\omega_{\phi_t}$ a smooth K\"ahler path from $\omega$ to  $\omega_\phi:=\omega+\ddbar\phi$. The functional $\hat{J}$ is well defined and independent of the chosen path as integral of the moment map in the infinite
dimensional setup (see \cite{DK} where it is denoted $I_{\mu_J}$). We notice that its definition can be extended to the case of $\chi$ is a close $(1,1)$-form not necessarily definite positive.\\
We discuss now the notion of coercivity, and refer for details to \cite[Chapter 3]{Siu} and \cite[Section 6.1]{Ti2}. Let us recall that we say that a functional $\mathcal{F}$ on the space of $\omega$-potentials $\phi$ is coercive if $$\mathcal{F}(\phi)\geq \alpha_1 \mathcal{I}_\omega(\phi)+\alpha_2$$
for uniform constants $\alpha_1,\alpha_2$ with $\alpha_1>0$ where $\mathcal{I}_\omega(\phi)=\int_M \phi(\omega^n-\omega_\phi^n)\geq 0$ is a classical energy functional. 
In the literature, it is also generally introduced the functional $$J^{AYM}_\omega(\phi) =\int_0^1\int_M\dot{\phi_t}(\omega^n-\omega_{\phi_t}^n)$$ for $\phi_t$ a path of $\omega$-potentials between 0 and $\phi$. The functionals $\mathcal{I}_\omega,J^{AYM}_\omega, \mathcal{I}_\omega-J^{AYM}_\omega$ are equivalent, as $$(n+1)J^{AYM}_\omega(\phi)\geq \mathcal{I}_\omega(\phi)\geq \frac{n+1}{n}J^{AYM}_\omega(\phi).$$
Thus the definition of coercivity can be made with respect to any of these three functionals. It is also well-known that one can write
\begin{equation}\label{Jfunc}J^{AYM}_\omega(\phi)=\int_M \phi \omega^n - \frac{1}{n+1}\sum_{i=0}^n \int_M \phi \omega^i \wedge (\omega+\ddbar\phi)^{n-i}.\end{equation}
Of course, the coercivity implies the \textit{properness property} (i.e if $\mathcal{I}_\omega(\phi)\to +\infty$ then $\mathcal{F}(\phi)\to +\infty$).\\
The following theorem is a consequence of the techniques of \cite{Derv3}.
\begin{newTheorem}\label{propernessK}
Assume that $M$ is a compact K\"ahler manifold, $\omega$ is a K\"ahler form and $\chi$ a closed $(1,1)$-form. Assume that  $\gamma=\frac{[\chi][\omega]^{n-1}}{[\omega]^n}>0$ and $\gamma [\omega] - [\chi]$ is nef, then the functional $\hat{J}_{\omega,\chi}$ is coercive on the space of $\omega$-potentials. If $\gamma=0$ then the functional $\hat{J}_{\omega,\chi}$ is bounded from below.
\end{newTheorem}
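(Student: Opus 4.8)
The plan is to split $\hat J_{\omega,\chi}$ into a manifestly coercive ``diagonal'' part and a part governed by the nef class $\gamma[\omega]-[\chi]$, and then to show that the latter is harmless. As preliminary reductions: since $[\chi][\omega]^{n-1}-\gamma[\omega]^n=0$ by the definition of $\gamma$, the functional $\hat J_{\omega,\chi}$ is invariant under $\phi\mapsto\phi+c$, so one may assume $\sup_M\phi=0$ throughout; moreover $\hat J_{\omega,\chi}$ is linear in the twist (the constant $\gamma$ depending linearly on the twist), so
$$\hat J_{\omega,\chi}(\phi)=\hat J_{\omega,\gamma\omega}(\phi)+\hat J_{\omega,-\eta}(\phi),\qquad \eta:=\gamma\omega-\chi,$$
where the twist $-\eta$ has associated constant $\tfrac{[-\eta][\omega]^{n-1}}{[\omega]^n}=0$, because $[\eta]\cdot[\omega]^{n-1}=\gamma[\omega]^n-[\chi][\omega]^{n-1}=0$. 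For the first summand, using $\gamma\omega\wedge\omega_{\phi_t}^{n-1}-\gamma\omega_{\phi_t}^n=-\gamma\,\ddbar\phi_t\wedge\omega_{\phi_t}^{n-1}$, an integration by parts on $M$ and differentiation in $t$ give
$$\hat J_{\omega,\gamma\omega}(\phi)=\frac{\gamma}{n}\bigl(\mathcal{I}_\omega(\phi)-J^{AYM}_\omega(\phi)\bigr)\ \ge\ \frac{\gamma}{n(n+1)}\,\mathcal{I}_\omega(\phi),$$
the inequality being the comparison $J^{AYM}_\omega\le\tfrac{n}{n+1}\mathcal{I}_\omega$ recalled in this section. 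Thus the whole problem reduces to showing that $\hat J_{\omega,-\eta}$ is bounded below — which is exactly the claim of the theorem in the boundary case $\gamma=0$, applied to the twist $-\eta$, whose hypothesis $-[-\eta]=[\eta]$ nef holds by assumption.

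So it remains to treat the case $\gamma=0$: one has $[\chi]\cdot[\omega]^{n-1}=0$ and $-[\chi]$ nef, and we must show $\hat J_{\omega,\chi}$ is bounded below. The clean way is to observe that these two conditions force $[\chi]=0$: setting $\beta:=-[\chi]$, the polynomial $\alpha\mapsto\beta\cdot\alpha^{n-1}$ is non-negative on the (open) Kähler cone — being, since $\beta$ is nef, a limit of products of Kähler classes — and it vanishes at the interior point $[\omega]$; hence its differential at $[\omega]$ vanishes, i.e.\ $\beta\cdot[\omega]^{n-2}=0$ in $H^{n-1,n-1}(M,\mathbb{R})$ by non-degeneracy of the cup product, and then $\beta=0$ by Hard Lefschetz (injectivity of $[\omega]^{n-2}\wedge(\cdot)$ on $H^{1,1}(M,\mathbb{R})$). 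Therefore $\chi=\ddbar\psi$ for some smooth real $\psi$, and a further integration by parts shows
$$\hat J_{\omega,\chi}(\phi)=\frac1n\Bigl(\int_M\psi\,\omega_\phi^n-\int_M\psi\,\omega^n\Bigr),$$
whose absolute value is at most $\tfrac2n\|\psi\|_{L^\infty}\!\int_M\omega^n$; in particular $\hat J_{\omega,\chi}$ is bounded, hence bounded below. Combined with the first paragraph, for $\gamma>0$ this yields $\hat J_{\omega,\chi}(\phi)\ge\tfrac{\gamma}{n(n+1)}\mathcal{I}_\omega(\phi)-C$ for a uniform constant $C$, which is coercivity.

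The one point that deserves care is the vanishing of a nef class of degree zero. Here the Kähler situation is in fact more comfortable than the projective one, because Hard Lefschetz is directly available; in the projective Section \ref{jflowsection} the corresponding positivity is instead extracted from intersection theory (\cite[Lemma 3.7]{Derv}, \cite{Od-Sa}). If one wished to imitate the projective proof without first trivialising $[\eta]$ — estimating the ``nef piece'' $\hat J_{\omega,-\eta}(\phi)=-\int_0^1\!\int_M\dot\phi_t\,\eta\wedge\omega_{\phi_t}^{n-1}\,dt$ from below directly for an arbitrary nef $[\eta]$ with $[\eta][\omega]^{n-1}=0$ — one would approximate $[\eta]$ by Kähler classes $[\eta]+\delta[\omega]$ and use that, on $\{\sup_M\phi=0\}$, the energy functional built from a Kähler representative has a definite sign; this is the more delicate route, sensitive to the normalisation of $\phi$, and it is precisely the kind of estimate handled by the techniques of \cite{Derv3}. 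The remaining verifications — path-independence of $\hat J_{\omega,\chi}$ and the variational identities for $\mathcal{I}_\omega$ and $J^{AYM}_\omega$ (the integral-of-moment-map formalism) — are routine.
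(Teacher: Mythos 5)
Your proposal is correct, but it takes a genuinely different route from the paper's. The paper works directly from Chen's explicit formula for $\hat J$, normalises $\sup_M\phi=0$, writes $\gamma\omega-\chi=\xi+\ddbar\psi$ with $\xi\ge0$ representing the nef class, and estimates the two resulting terms separately (the $\xi$-term by the sign of $\phi$, the $\ddbar\psi$-term by an integration by parts giving $\int_M\psi(\omega_\phi^n-\omega^n)\le(\sup\psi-\inf\psi)\Vol$), arriving at $\hat J(\phi)\ge\gamma\left(\mathcal{I}_\omega-J^{AYM}_\omega\right)(\phi)+c_1$. You instead observe that the hypotheses themselves force the nef class $[\eta]=\gamma[\omega]-[\chi]$ to vanish, since $[\eta]\cdot[\omega]^{n-1}=0$ by the very definition of $\gamma$ and a nef class with zero degree against a K\"ahler class is zero; your derivative-plus-Hard-Lefschetz argument for this is correct (equivalently one can invoke the primitive decomposition and the Hodge--Riemann bilinear relations), and your identities $\hat J_{\omega,\gamma\omega}=\frac{\gamma}{n}\left(\mathcal{I}_\omega-J^{AYM}_\omega\right)$ and $\hat J_{\omega,\ddbar\psi}(\phi)=\frac1n\int_M\psi(\omega_\phi^n-\omega^n)$ check out. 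What your route buys: it is more structural, requires no estimate involving a possibly singular positive part (indeed it shows the paper's decomposition can always be taken with $\xi=0$, with $\psi$ genuinely smooth), and it makes explicit that the hypotheses force $[\chi]=\gamma[\omega]$, i.e.~the K\"ahler analogue of Remark \ref{trivial}. What the paper's route buys: it never uses this vanishing, so the same estimate is robust and is the one that would survive under hypotheses where the relevant nef class has strictly positive degree (compare the surface condition $\frac43\gamma L_1-L_2$ nef in Theorem \ref{Jsemi}, or conditions of the form $c\gamma[\omega]-[\chi]$ nef with $c>1$), where your reduction to the $\gamma=0$ case collapses. The discrepancy between your coercivity constant $\frac{\gamma}{n(n+1)}$ and the paper's $\gamma$ merely reflects a factor-$n$ normalisation in the quoted Chen formula and is immaterial for coercivity.
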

\begin{remark}
	For the definition of nefness for (non integral) K\"ahler classes, the reader is referred e.g.~to \cite[Chapter 17]{Dembk}.
\end{remark}

\begin{proof}
We denote $\Vol$ the volume of the class $[\omega]$ over $M$. We know from Chen \cite{C2} that $\hat{J}$ writes as
 \begin{align*}
  \hat{J}(\phi)=-\frac{n}{n+1}\gamma \int_M \phi\sum_{i=0}^n \omega^i \wedge \omega_\phi^{n-i} + \int_M \phi \chi \wedge \sum_{i=0}^{n-1} \omega^i \wedge \omega_\phi^{n-1-i}
  \end{align*}
and without loss of generality,  restrict to the case $\sup_M \phi=0$.
 From our assumption, $\gamma \omega-\chi$ can be written as  $\xi+ \sqrt{-1}\partial\bar\partial\psi$ with possibly singular $\xi\geq 0$ and $\psi$ smooth.
 Now, 
 \begin{align*}
  \hat{J}(\phi)=& -\frac{n}{n+1}\gamma\int_M \phi\sum_{i=0}^n \omega^i \wedge \omega_\phi^{n-i} \\
&  + \int_M \phi (\chi -\gamma\omega ) \sum_{i=0}^{n-1} \omega^i \wedge
\omega_\phi^{n-1-i}  +\gamma\int_M \phi \omega\wedge\sum_{i=0}^{n-1} \omega^i \wedge \omega_\phi^{n-1-i},\\
=&    -\frac{n}{n+1}\gamma\int_M \phi\sum_{i=0}^n \omega^i \wedge \omega_\phi^{n-i} \\
&  - \int_M \phi (\xi+ \sqrt{-1}\partial\bar\partial\psi) \sum_{i=0}^{n-1} \omega^i \wedge
\omega_\phi^{n-1-i} +\gamma\int_M \phi \omega\wedge\sum_{i=0}^{n-1} \omega^i \wedge \omega_\phi^{n-1-i}.
 \end{align*}
Let's see that the second term $ - \int_M \phi (\xi+ \sqrt{-1}\partial\bar\partial\psi) \sum_{i=0}^{n-1} \omega^i \wedge
\omega_\phi^{n-1-i}$ is bounded from below, by decomposing this expression as follows. As $\xi$ is non-negative and $\sup_M\phi=0$, we have
\begin{align*}
 \int_M \phi\xi\wedge \sum_{i=0}^{n-1} \omega^i \wedge \omega_\phi^{n-1-i} & \leq \sup_M\phi \int_M \xi\wedge \sum_{i=0}^{n-1} \omega^i \wedge \omega_\phi^{n-1-i},\\
    &\leq 0.
\end{align*}
Moreover, 
\begin{align*}
\int_M  \sqrt{-1}\phi\partial\bar\partial\psi \sum_{i=0}^{n-1} \omega^i \wedge \omega_\phi^{n-1-i}&=\int_M \psi(\omega_\phi-\omega)\sum_{i=0}^{n-1} \omega^i \wedge \omega_\phi^{n-1-i},\\
 &=\int_M \psi (\omega_\phi^n -\omega^n),\\
 &\leq \int_M (\sup_M \psi)\omega_\phi^n- \int_M (\inf_M \psi)\omega^n,\\
 &\leq (\sup_M \psi -\inf_M\psi)\Vol.
\end{align*}
Hence  $ - \int_M \phi (\xi+ \sqrt{-1}\partial\bar\partial\psi) \sum_{i=0}^{n-1} \omega^i \wedge
\omega_\phi^{n-1-i}$ is bounded from below by a constant $c_1$ independent of $\phi$.
Then, using \eqref{Jfunc},
\begin{align*}
  \hat{J}(\phi)\geq &-\frac{n}{n+1}\gamma\int_M \phi\sum_{i=0}^n \omega^i \wedge \omega_\phi^{n-i} +c_1+\gamma\int_M \phi \sum_{i=1}^n \omega^i \wedge \omega_\phi^{n-i},\\
 \geq &\gamma\left( \frac{1}{n+1}\int_M \phi\sum_{i=0}^n \omega^i \wedge \omega_\phi^{n-i} -\int_M\phi \omega_\phi^n \right)+c_1,\\
  \geq &\gamma\left( \frac{1}{n+1}\int_M \phi\sum_{i=0}^n \omega^i \wedge \omega_\phi^{n-i} -\int_M \phi \omega^n + \int_M\phi (\omega^n-\omega_\phi^n) \right)+c_1,\\
  \geq &\gamma\left(\mathcal{I}_\omega(\phi)-J_\omega^{AYM}(\phi)\right)+c_1.
 \end{align*}.
\end{proof}

The above theorem leads to the following result, which can be regarded as a K\"ahler analogue of the first part of Corollary \ref{kstablecone}.
\begin{newcorollary}\label{coercivityMab}
Assume that $M$ is a compact K\"ahler manifold, $\omega$ is a K\"ahler form. Assume that  $\gamma=\frac{[K_M][\omega]^{n-1}}{[\omega]^n}\geq 0$ and $\gamma [\omega] - [K_M]$ is nef, then the Mabuchi functional for $[\omega]$ is coercive. 
\end{newcorollary}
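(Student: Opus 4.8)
The plan is to deduce Corollary~\ref{coercivityMab} from Theorem~\ref{propernessK} by means of Chen's decomposition of the Mabuchi K-energy. First I would set $\chi:=-\Ric(\omega)$, a closed real $(1,1)$-form lying in $c_1(K_M)=[K_M]$ (no positivity of $\chi$ is needed in what follows); then the topological constant attached to $\hat{J}_{\omega,\chi}$ is exactly $\gamma=\frac{[K_M][\omega]^{n-1}}{[\omega]^n}$, and the hypothesis that $\gamma[\omega]-[K_M]$ is nef is precisely the hypothesis ``$\gamma[\omega]-[\chi]$ nef'' of Theorem~\ref{propernessK}.

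The key identity I would use is Chen's decomposition formula \cite{C2} for the Mabuchi functional $\mathcal{M}$ of $[\omega]$,
\[
\mathcal{M}(\phi)=\frac{1}{\Vol}\int_M\log\!\left(\frac{\omega_\phi^n}{\omega^n}\right)\omega_\phi^n+\frac{n}{\Vol}\,\hat{J}_{\omega,\chi}(\phi).
\]
I would verify it by differentiating both sides along a path $\phi_t$ of potentials from $0$ to $\phi$: starting from $\frac{d}{dt}\mathcal{M}(\phi_t)=-\frac1{\Vol}\int_M\dot\phi_t\big(\Scal(\omega_{\phi_t})-\underline{\Scal}\big)\omega_{\phi_t}^n$, using $\Scal(\omega_{\phi_t})\omega_{\phi_t}^n=n\,\Ric(\omega_{\phi_t})\wedge\omega_{\phi_t}^{n-1}$ together with $\Ric(\omega_{\phi_t})=\Ric(\omega)-\ddbar\log(\omega_{\phi_t}^n/\omega^n)$, integrating by parts to recognize the resulting entropy term as $\frac1{\Vol}\frac{d}{dt}\int_M\log(\omega_{\phi_t}^n/\omega^n)\omega_{\phi_t}^n$, and plugging in $\underline{\Scal}=n\frac{c_1(M)[\omega]^{n-1}}{[\omega]^n}=-n\gamma$ and $-\Ric(\omega)=\chi$; both sides vanish at $\phi=0$.

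Then I would invoke the elementary fact that $\int_M\log(\omega_\phi^n/\omega^n)\omega_\phi^n\ge 0$ for every $\phi$, which is Jensen's inequality for the probability measure $\omega_\phi^n/\Vol$ and the convex function $-\log$ (non-negativity of relative entropy). Hence $\mathcal{M}(\phi)\ge\frac{n}{\Vol}\hat{J}_{\omega,\chi}(\phi)$. If $\gamma>0$, Theorem~\ref{propernessK} asserts that $\hat{J}_{\omega,\chi}$ is coercive, and its proof in fact gives $\hat{J}_{\omega,\chi}(\phi)\ge\frac{\gamma}{n+1}\mathcal{I}_\omega(\phi)+c_1$ for a constant $c_1$ independent of $\phi$ (use $\mathcal{I}_\omega-J^{AYM}_\omega\ge\frac{1}{n+1}\mathcal{I}_\omega$ in the last chain of inequalities there); therefore $\mathcal{M}(\phi)\ge\frac{n\gamma}{(n+1)\Vol}\mathcal{I}_\omega(\phi)+\frac{nc_1}{\Vol}$, which is the desired coercivity of the Mabuchi functional.

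It remains to handle the degenerate case $\gamma=0$. There Theorem~\ref{propernessK} only gives that $\hat{J}_{\omega,\chi}$ is bounded below, so the decomposition gives that $\mathcal{M}$ is bounded below; but then $-[K_M]$ is nef with $(-[K_M])\cdot[\omega]^{n-1}=0$, so the Hodge index inequality for K\"ahler classes (the K\"ahler counterpart of the reasoning in Remark~\ref{trivial}) forces $[K_M]=0$ and $M$ is Calabi--Yau, for which coercivity of the Mabuchi functional is classical. The only delicate point in the whole argument is the sign bookkeeping in the decomposition formula --- keeping $c_1(M)$ and $c_1(K_M)=[K_M]$, and $\Ric(\omega)$ and $\chi$, straight, and normalizing $\underline{\Scal}$ correctly; there is no new analytic input, the entire substance residing in Theorem~\ref{propernessK}.
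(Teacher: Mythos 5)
Your proof uses the same Chen decomposition of the K-energy as the paper, but concludes differently, and for $\gamma>0$ your route is correct and in one respect more elementary. The paper's proof is: entropy term $H(\phi)=\int_M\log(\omega_\phi^n/\omega^n)\,\omega_\phi^n$ is itself \emph{coercive} (citing \cite[Theorem 7.13]{Ti2}), while Theorem \ref{propernessK} makes $\hat{J}_{\omega,-\Ric(\omega)}$ coercive ($\gamma>0$) or bounded below ($\gamma=0$), so the sum is coercive in both cases at once. You instead discard the entropy using only Jensen's inequality $H\ge 0$ and extract from the proof of Theorem \ref{propernessK} the quantitative bound $\hat{J}(\phi)\ge\frac{\gamma}{n+1}\mathcal{I}_\omega(\phi)+c_1$ (legitimate: the normalization $\sup_M\phi=0$ used there is harmless because $\hat{J}$, $\mathcal{I}_\omega$ and $J^{AYM}_\omega$ are invariant under adding constants to $\phi$, and $\mathcal{I}_\omega-J^{AYM}_\omega\ge\frac{1}{n+1}\mathcal{I}_\omega$ follows from the stated comparison of the three functionals). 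This avoids the alpha-invariant/entropy-coercivity input entirely when $\gamma>0$; the normalization discrepancy with the paper's displayed formula (your factors $n$ and $1/\Vol$) is immaterial.

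The price is the case $\gamma=0$, and that is where your write-up has a gap. The reduction is fine: $-[K_M]$ nef and primitive with respect to $[\omega]$ forces $[K_M]=0$ by the Hodge--Riemann bilinear relations, the K\"ahler counterpart of Remark \ref{trivial}. But the final assertion that coercivity of the Mabuchi functional on a (possibly non-projective) Calabi--Yau is ``classical'' is exactly what needs an argument: the natural proof is that $\chi=-\Ric(\omega)=\ddbar\psi$ makes $\hat{J}_{\omega,\chi}(\phi)=\frac{1}{n}\int_M\psi(\omega_\phi^n-\omega^n)$ bounded, and then coercivity of $\mathcal{M}$ is equivalent to coercivity of the entropy term --- i.e.\ precisely the ingredient from \cite[Theorem 7.13]{Ti2} that the paper uses and that you set out to avoid; the alternative is Berman--Darvas--Lu \cite{BDL16} (Yau's Ricci-flat metric is cscK, hence the K-energy is coercive), which is a deep recent theorem and exactly the kind of input the paper explicitly says it does not want to rely on. So either spell out the bounded-$\hat{J}$-plus-entropy-coercivity argument for $\gamma=0$ (in which case the same citation also gives the paper's one-line proof for all $\gamma\ge 0$), or accept that your argument is only self-contained for $\gamma>0$.
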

\begin{proof}
 We recall that Mabuchi K-energy is given by
 \begin{align*}K_\omega(\phi)&=\int_M \log\frac{\omega_\phi^n}{\omega^n} \omega^n_\phi + \hat{J}(\phi),
 \end{align*}
 and that the entropy term $H(\phi)=\int_M \log\frac{\omega_\phi^n}{\omega^n}\omega^n_\phi $ is a coercive functional, see for instance \cite[Theorem 7.13]{Ti2}. We conclude by applying the previous theorem.
\end{proof}
\begin{remark}
	Let us mention that for Fano manifolds a similar result with ample classes appears in \cite{Li-Shi-Yao} and \cite{Derv3}. See also \cite{Zheng2015}.
\end{remark}

In \cite{S2016} and in \cite{DR2016}, it is defined a notion of (uniform) J-stability and K-stability \textit{in the K\"ahler sense}, i.e for a possibly non integral K\"ahler class $[\omega]$ over a compact K\"ahler manifold (see also related results in \cite{S2016}).  In that case the J-weight $J_{[\chi]}$, the Donaldson-Futaki invariant, and the minimum norm are given as intersection numbers on a resolution of singularities of the test configuration by an expression of the form \eqref{numericaljweight}, \eqref{DFut} and \eqref{minnom}. We refer to the paper of Dervan and Ross for details. The main difference with the projective case, is that a test configuration is defined as a normal K\"ahler space $(\mathcal{X},\Omega)$, where the $\mathbb{C}^*$ action on $\mathcal{X}$ covers the one on $\mathbb{P}^1$ and $\Omega$ is a  $S^1$-invariant K\"ahler metric. The fibers $(\mathcal{X}_t,\Omega_t)$ are isomorphic to $(M,\omega)$ for $t\neq 0$. With these notions in hands, the definitions of (uniform) J-stability and K-stability in the K\"ahler setting are identical to Definitions \ref{jstabilityone} and \ref{kstability}.\\
If one fixes a test configuration $(\mathcal{X},\Omega)$ for $(M,[\omega])$, Dervan and Ross related the J-weight of $(\mathcal{X},\Omega)$ to the derivative of the $\hat{J}_{\omega,\chi}$ functional. More precisely, they  proved the following theorem.
\begin{theorem}\cite[Theorem 6.4]{DR2016} The $\mathbb{C}^*$ action coming from the test configuration   $(\mathcal{X},\Omega)$ provides a biholomorphism $\rho(t)$ between the fibers $\mathcal{X}_1$ and $\mathcal{X}_t$ for $t\neq 0$.
Define $\rho(\tau)^*\Omega_\tau=\Omega_1+\ddbar \theta_\tau$, $t=-\log\vert \tau\vert^2$ and $\phi_t=\theta_\tau$. Then,
$$J_{[\chi]}(\mathcal{X},\Omega) = \lim_{t \to \infty} \frac{d}{dt}\hat{J}_{\omega,\chi}(\phi_t).$$
\end{theorem}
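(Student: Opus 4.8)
The plan is to compute the asymptotic slope of $\hat{J}_{\omega,\chi}$ along the ray $\phi_t$ and match it with the intersection-theoretic $J$-weight, following the method of \cite{DR2016} and paralleling the projective-case computation underlying Proposition \ref{jflowblow-ups}. Since $\hat{J}_{\omega,\chi}$ is path-independent, one has along $\phi_t$ that $\frac{d}{dt}\hat{J}_{\omega,\chi}(\phi_t)=\int_M\dot\phi_t\big(\chi\wedge\omega_{\phi_t}^{n-1}-\gamma\,\omega_{\phi_t}^n\big)$. The geometric input I would use is the standard correspondence between test configurations and rays of potentials: for $\rho(\tau)^*\Omega_\tau=\Omega_1+\ddbar\theta_\tau$ with $t=-\log|\tau|^2$, the velocity $\dot\phi_t=\dot\theta_\tau$ equals, up to a fixed constant coming from this normalisation of $t$, the Hamiltonian $h_\Omega$ of the generator of the $S^1$-action on $(\mathcal{X},\Omega)$, transported to the fixed fibre $\mathcal{X}_1\cong M$ by $\rho(\tau)$. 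Changing variables by $\rho(\tau)$, and writing $X$ for an $S^1$-invariant closed $(1,1)$-form on a resolution $\widetilde{\mathcal{X}}\to\bar{\mathcal{X}}$ of the $\mathbb{P}^1$-compactification of $\mathcal{X}$, representing the class used to extend $[\chi]$, one obtains, up to a dimensional constant, $\frac{d}{dt}\hat{J}_{\omega,\chi}(\phi_t)=\int_{\mathcal{X}_\tau}h_\Omega\big(X\wedge\Omega^{n-1}-\gamma\,\Omega^n\big)$.

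Next I would take the limit $t\to\infty$, i.e.\ $\tau\to 0$. The integrand $h_\Omega(X\wedge\Omega^{n-1}-\gamma\,\Omega^n)$ is a \emph{fixed} smooth $2n$-form on $\widetilde{\mathcal{X}}$, so $\tau\mapsto\int_{\mathcal{X}_\tau}h_\Omega(X\wedge\Omega^{n-1}-\gamma\,\Omega^n)$ extends continuously to $\tau=0$ (Fubini along the smooth family $\widetilde{\mathcal{X}}\to\mathbb{P}^1$), giving $\lim_{t\to\infty}\frac{d}{dt}\hat{J}_{\omega,\chi}(\phi_t)=\int_{\mathcal{X}_0}h_\Omega(X\wedge\Omega^{n-1}-\gamma\,\Omega^n)$ over the resolved central fibre. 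Existence of this limit also follows a priori from the geodesic convexity of $\hat{J}_{\omega,\chi}$, it being an integral of the moment map $\mu_J$ along the sub-geodesic ray, which moreover identifies the limit with the supremum of the slopes.

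It then remains to identify this central-fibre integral with the intersection number defining $J_{[\chi]}(\mathcal{X},\Omega)$ — an expression of the shape \eqref{numericaljweight} on the resolution (cf.\ Proposition \ref{jflowblow-ups} and the last paragraph of this section). This is a Duistermaat--Heckman / equivariant-integration computation: the $S^1$-equivariant extensions $\Omega+u\,h_\Omega$ and $X+u\,h_X$ are $d_{S^1}$-closed on $\bar{\mathcal{X}}$, and expanding $\int_{\bar{\mathcal{X}}}(\Omega+u\,h_\Omega)^{n+1}$ and $\int_{\bar{\mathcal{X}}}(X+u\,h_X)\wedge(\Omega+u\,h_\Omega)^{n}$ in powers of $u$ and extracting the relevant coefficients rewrites the central-fibre integral as the appropriate combination of $\bar\Omega^{n+1}$ and $\bar\Omega^n\cdot\bar X$ on $\bar{\mathcal{X}}$, namely $J_{[\chi]}(\mathcal{X},\Omega)$ in the normalisation that makes the statement exact. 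Concretely, $\int_M\dot\phi_t\,\omega_{\phi_t}^n$ and $\int_M\dot\phi_t\,\chi\wedge\omega_{\phi_t}^{n-1}$ are the $t$-derivatives of the Monge--Amp\`ere energy and of its $\chi$-twisted analogue, whose asymptotic slopes along $\phi_t$ are the associated non-Archimedean energies.

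The main obstacle is this last step together with the passage to $\mathcal{X}_0$: since $\mathcal{X}_0$ may be singular and non-reduced and $\Omega$ need not extend smoothly (let alone as a Kähler form) across it, one must genuinely work on a resolution $\widetilde{\mathcal{X}}\to\bar{\mathcal{X}}$, replace $\Omega$ and $X$ by smooth representatives of the relevant classes, and verify both that the fibre-integration above is continuous at $\tau=0$ on $\widetilde{\mathcal{X}}$ and that the equivariant computation is insensitive to the choice of resolution and of representatives — which is exactly where the fact, recalled at the end of the section, that $J_{[\chi]}$ is \emph{defined} as such an intersection number on a resolution enters. Once the slope formulas for the Monge--Amp\`ere-type energies are granted, the remaining algebra assembling them into \eqref{numericaljweight} is routine.
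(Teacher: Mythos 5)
First, note that the paper you are commenting on does not prove this statement at all: it is quoted verbatim from Dervan--Ross \cite{DR2016}, so there is no internal proof to compare with. Your outline does follow the same general route as the actual proof in the cited source (and in the closely related work of Sj\"ostr\"om Dyrefelt): identify $\dot\phi_t$ with the transported Hamiltonian, rewrite the derivative of $\hat{J}_{\omega,\chi}$ as a fibre integral on the total space, pass to the limit $\tau\to 0$, and identify the limit with the intersection-theoretic weight of the form \eqref{numericaljweight} on a resolution. So the strategy is the right one.

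However, as written there are two genuine gaps. The central one is the change-of-variables step: you replace $\chi$ (a fixed form on $M\cong\mathcal{X}_1$) by the restriction to $\mathcal{X}_\tau$ of an arbitrary $S^1$-invariant representative $X$ of the extended class, and claim the exact identity $\frac{d}{dt}\hat{J}_{\omega,\chi}(\phi_t)=\int_{\mathcal{X}_\tau}h_\Omega\bigl(X\wedge\Omega^{n-1}-\gamma\,\Omega^n\bigr)$. For a general representative this is false: $\rho(\tau)_*\chi$ and $X|_{\mathcal{X}_\tau}$ differ by $\ddbar\psi_\tau$ with $\psi_\tau$ transported by the degenerating family of biholomorphisms, and the resulting error term $\int_{\mathcal{X}_\tau}h_\Omega\,\ddbar\psi_\tau\wedge\Omega^{n-1}$ is exactly the kind of quantity that can fail to vanish as $\tau\to 0$ unless it is controlled. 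The identity becomes exact only if one first resolves so that the test configuration equivariantly dominates $M\times\mathbb{P}^1$ and takes $X$ to be the pullback of $\chi$ under that map; this choice (or an explicit argument that the limit is independent of the representative) is the missing ingredient, not a routine verification. Second, the continuity of $\tau\mapsto\int_{\mathcal{X}_\tau}\beta$ at $\tau=0$ cannot be justified by ``Fubini along the smooth family'': even after resolution the map $\widetilde{\mathcal{X}}\to\mathbb{P}^1$ is not a submersion over $0$ and the central fibre is a divisor with multiplicities; the continuity of fibre integrals of a fixed smooth form is true, but it requires the flatness/cycle-theoretic argument (with the central fibre counted with multiplicities), which is precisely one of the technical points of \cite{DR2016}. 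With those two points supplied, the remaining equivariant bookkeeping identifying the central-fibre integral with $J_{[\chi]}(\mathcal{X},\Omega)$ is indeed as you describe.
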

In \cite{DR2016}, the definition of minimum norm of the test configuration $ (\mathcal{X},\Omega)$ is different from the one we are working with, but both norms turn out to be equivalent. The following relationship \eqref{min} is pointed out in  \cite[Remark 2.8 (i)]{DR2016} and obtained in a similar way to \cite[Theorem 4.16]{DR2016},
\begin{equation}
\label{min}\Vert (\mathcal{X},\Omega)\Vert_m = \lim_{t \to \infty} \frac{d}{dt}(\mathcal{I}_\omega - J^{AYM}_\omega)(\phi_t).
\end{equation}

This leads to the following result, which can be compared to Theorem \ref{sufficientconditionjstability} for smooth projective varieties.
\begin{newcorollary}\label{kahlerJ}
 Assume that $M$ is a compact K\"ahler manifold, $\omega$ is a K\"ahler form and $\chi$ a closed $(1,1)$-form. Assume that  $\gamma=\frac{[\chi][\omega]^{n-1}}{[\omega]^n}>0$ and $\gamma [\omega] - [\chi]$ is nef, then $(M,\omega)$ is uniformly J-stable (with respect to $\chi$) in the K\"ahler sense. If $\gamma=0$ then $(M,\omega)$ is J-semistable (with respect to $\chi$) in the K\"ahler sense.
\end{newcorollary}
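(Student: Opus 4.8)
The plan is to deduce Corollary \ref{kahlerJ} directly from Theorem \ref{propernessK} by using the relations provided just above the statement: the identification of the J-weight of a Kähler test configuration with $\lim_{t\to\infty}\frac{d}{dt}\hat{J}_{\omega,\chi}(\phi_t)$ from \cite[Theorem 6.4]{DR2016}, and the identification \eqref{min} of the minimum norm with $\lim_{t\to\infty}\frac{d}{dt}(\mathcal{I}_\omega-J^{AYM}_\omega)(\phi_t)$. The strategy is essentially the Kähler transcription of the argument in Theorem \ref{sufficientconditionjstability}, but carried out at the level of functionals rather than intersection numbers.

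Concretely, I would fix a nontrivial test configuration $(\mathcal{X},\Omega)$ for $(M,[\omega])$, let $\rho(t)$, $\phi_t$ be as in \cite[Theorem 6.4]{DR2016}, and inspect the chain of inequalities at the end of the proof of Theorem \ref{propernessK}. That proof shows, under the hypothesis $\gamma>0$ and $\gamma[\omega]-[\chi]$ nef, that $\hat{J}_{\omega,\chi}(\phi)\geq \gamma(\mathcal{I}_\omega(\phi)-J^{AYM}_\omega(\phi))+c_1$ for a constant $c_1$ independent of $\phi$. Applying this along the path $\phi_t$, differentiating in $t$, and taking $t\to\infty$, the constant $c_1$ drops out and one obtains
\begin{equation*}
\lim_{t\to\infty}\frac{d}{dt}\hat{J}_{\omega,\chi}(\phi_t)\geq \gamma\lim_{t\to\infty}\frac{d}{dt}(\mathcal{I}_\omega-J^{AYM}_\omega)(\phi_t),
\end{equation*}
that is $J_{[\chi]}(\mathcal{X},\Omega)\geq \gamma\,\Vert(\mathcal{X},\Omega)\Vert_m$, which is precisely uniform J-stability in the Kähler sense with uniform constant $\epsilon=\gamma>0$. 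For the case $\gamma=0$, Theorem \ref{propernessK} gives that $\hat{J}_{\omega,\chi}$ is bounded below, so $\lim_{t\to\infty}\frac{d}{dt}\hat{J}_{\omega,\chi}(\phi_t)\geq 0$ (a bounded-below convex-type functional cannot have negative asymptotic slope along such a path), hence $J_{[\chi]}(\mathcal{X},\Omega)\geq 0$ for every test configuration, which is J-semistability.

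The main obstacle, and the point requiring care, is the interchange of the inequality with the operations $\frac{d}{dt}$ and $\lim_{t\to\infty}$: one needs to know that taking the asymptotic slope of the functional inequality $\hat{J}_{\omega,\chi}(\phi_t)\geq \gamma(\mathcal{I}_\omega-J^{AYM}_\omega)(\phi_t)+c_1$ is legitimate, i.e.\ that both sides have well-defined limiting derivatives along $\phi_t$ (which is exactly the content of \cite[Theorem 6.4]{DR2016} and \eqref{min}) and that an inequality between two functions with existing asymptotic slopes passes to the slopes. This last point is elementary (if $f\geq g+c$ and $f'(\infty)$, $g'(\infty)$ exist then $f'(\infty)\geq g'(\infty)$, by considering difference quotients over long intervals), but it should be stated. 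A secondary, more cosmetic obstacle is simply tracking normalizations: the proof of Theorem \ref{propernessK} normalizes by $\sup_M\phi=0$, and one should note that the asymptotic slope is insensitive to this normalization since adding a constant to $\phi_t$ changes $\hat{J}_{\omega,\chi}$, $\mathcal{I}_\omega$ and $J^{AYM}_\omega$ in a controlled (at worst linear-in-$t$, in fact bounded after matching normalizations) way that cancels in the comparison. Everything else is a direct substitution, so the corollary follows with essentially no new computation.
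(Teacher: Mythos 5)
Your proposal is correct and follows essentially the same route as the paper: apply the coercivity estimate of Theorem \ref{propernessK} along the path $\phi_t$ attached to a K\"ahler test configuration, and convert it into a slope inequality via \cite[Theorem 6.4]{DR2016} and \eqref{min}, with the $\gamma=0$ case handled by boundedness from below. The only (harmless) difference is cosmetic: the paper states the conclusion with constant $(1-\varepsilon)\gamma$ for any $\varepsilon\in(0,1)$, whereas your elementary asymptotic-slope lemma gives the bound directly with $\gamma$, and your explicit remarks on why the inequality passes to limiting derivatives and on normalization invariance make precise a step the paper leaves somewhat implicit.
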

\begin{proof}
  We apply Theorem \ref{propernessK}. At infinity, for any $1>\varepsilon>0$,
 $$\hat{J}(\phi_t) - (1-\varepsilon)\gamma(\mathcal{I}_\omega - J^{AYM}_\omega)(\phi_t)$$ is increasing as $(\mathcal{I}_\omega - J^{AYM}_\omega)(\phi_t)\to +\infty$. 
 Consequently, we have $$\frac{d}{dt}\hat{J}_{\omega,\chi}(\phi_t)\geq (1-\varepsilon)\gamma\frac{d}{dt}(\mathcal{I}_\omega - J^{AYM}_\omega)(\phi_t).$$
Taking the limit gives that  for any $1>\varepsilon>0$,
$$J_{[\chi]}(\mathcal{X},\Omega)\geq (1-\varepsilon)\gamma\Vert ({\chi},\Omega)\Vert_m$$
and thus uniform J-stability for the K\"ahler class $[\omega]$. The case $\gamma=0$ is treated in a similar way. 
\end{proof}

The following result can be regarded as a K\"ahler analogue of Theorem \ref{Jsemi}.
\begin{newcorollary}
Assume that $M$ is a compact K\"ahler manifold, $\omega$ is a K\"ahler form. Assume that  $\gamma=\frac{[K_M][\omega]^{n-1}}{[\omega]^n}\geq 0$ and $\gamma [\omega] - [K_M]$ is nef, then $(M,[\omega])$ is uniformly K-stable in the K\"ahler sense. 
\end{newcorollary}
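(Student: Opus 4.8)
The statement we need is the K\"ahler analogue of Theorem \ref{jimpliesk}: under the hypotheses $\gamma = \frac{[K_M][\omega]^{n-1}}{[\omega]^n}\ge 0$ and $\gamma[\omega]-[K_M]$ nef, one should deduce uniform K-stability in the K\"ahler sense. The plan is to mimic exactly the algebro-geometric proof of Theorem \ref{jimpliesk}, replacing intersection-theoretic positivity by the analytic input provided by Corollary \ref{coercivityMab} together with the Dervan--Ross dictionary \cite[Theorem 6.4]{DR2016} and the norm identity \eqref{min}. Concretely, the Mabuchi K-energy decomposes as $K_\omega(\phi) = H(\phi) + \hat J_{\omega,\chi}(\phi)$ with $\chi \in [K_M]$, and Corollary \ref{coercivityMab} tells us that under our hypotheses $K_\omega$ is coercive, i.e.~$K_\omega(\phi)\ge \alpha_1 \mathcal{I}_\omega(\phi)+\alpha_2$ for uniform $\alpha_1>0$. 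The goal is to translate this coercivity into the uniform bound $\DF(\mathcal{X},\Omega)\ge \epsilon \Vert(\mathcal{X},\Omega)\Vert_m$ for all non-trivial test configurations with normal total space.

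First I would recall the slope formula for the Donaldson-Futaki invariant along the geodesic ray $\phi_t$ associated to a test configuration $(\mathcal{X},\Omega)$ for $(M,[\omega])$: by the non-Archimedean / derivative-of-functional correspondence one has $\DF(\mathcal{X},\Omega)=\lim_{t\to\infty}\frac{d}{dt}K_\omega(\phi_t)$ (this is the analogue, in the K\"ahler setting, of the computation behind \cite[Theorem 6.4]{DR2016}, applied to the Mabuchi energy rather than $\hat J$; it follows from combining that theorem for the $\hat J$-part with the known slope formula for the entropy term $H$). Next, split $K_\omega(\phi_t)=H(\phi_t)+\hat J_{\omega,\chi}(\phi_t)$ and observe that the entropy contributes non-negatively to the slope, so $\lim_{t\to\infty}\frac{d}{dt}K_\omega(\phi_t)\ge \lim_{t\to\infty}\frac{d}{dt}\hat J_{\omega,\chi}(\phi_t) = J_{[K_M]}(\mathcal{X},\Omega)$ by \cite[Theorem 6.4]{DR2016}. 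Then, exactly as in the computation in the proof of Theorem \ref{propernessK} and Corollary \ref{kahlerJ}, the nef hypothesis on $\gamma[\omega]-[K_M]$ gives the lower bound $J_{[K_M]}(\mathcal{X},\Omega)\ge(1-\varepsilon)\gamma\Vert(\mathcal{X},\Omega)\Vert_m$ for every $\varepsilon\in(0,1)$, hence $J_{[K_M]}(\mathcal{X},\Omega)\ge\gamma\Vert(\mathcal{X},\Omega)\Vert_m$. When $\gamma>0$ this already yields a uniform bound on the $J$-part; when $\gamma=0$ one instead uses the coercivity of the entropy term directly to produce the needed $\epsilon$, as in the proof of Theorem \ref{jimpliesk} where $K_{\scB/M\times\pr^1}$ played this role (the klt hypothesis there is automatic in the smooth K\"ahler setting since $M$ is a smooth manifold).

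More carefully for the $\gamma\ge 0$ boundary case: one writes $\DF(\mathcal{X},\Omega)=J_{[K_M]}(\mathcal{X},\Omega)+\big(\lim_{t\to\infty}\frac{d}{dt}H(\phi_t)\big)$ and uses that the entropy slope dominates $\epsilon\Vert(\mathcal{X},\Omega)\Vert_m$ for a uniform $\epsilon>0$. This last fact is the analytic counterpart of \cite[Lemma 3.17]{Dervan2014}: the log-canonical-threshold-type estimate for the relative canonical class becomes here the statement that the entropy functional $H$ is itself coercive (as quoted from \cite[Theorem 7.13]{Ti2} in the proof of Corollary \ref{coercivityMab}), so its slope along any non-trivial ray is bounded below by a fixed multiple of the minimum norm; combined with the non-negativity of $J_{[K_M]}(\mathcal{X},\Omega)$ from Corollary \ref{kahlerJ} (the $\gamma=0$ J-semistability statement) this gives $\DF(\mathcal{X},\Omega)\ge\epsilon\Vert(\mathcal{X},\Omega)\Vert_m$.

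\textbf{Main obstacle.} The delicate point is making rigorous the slope formula $\DF(\mathcal{X},\Omega)=\lim_{t\to\infty}\frac{d}{dt}K_\omega(\phi_t)$ and, more precisely, the lower bound $\liminf_{t\to\infty}\frac{d}{dt}H(\phi_t)\ge\epsilon\Vert(\mathcal{X},\Omega)\Vert_m$ with $\epsilon$ uniform over all test configurations — passing from coercivity of $H$ as a functional to a uniform slope estimate along rays coming from (possibly singular) test configurations requires care about the regularity of the geodesic ray $\phi_t$ and about the behaviour of the entropy at the singular central fibre. One option, which I would prefer for a clean write-up, is to circumvent this by invoking the already-established intersection-theoretic formulas: \cite{DR2016} express $\DF$, $J_{[K_M]}$ and $\Vert\cdot\Vert_m$ as intersection numbers on a resolution of $(\mathcal{X},\Omega)$ by formulas of the shape \eqref{numericaljweight}, \eqref{DFut}, \eqref{minnom}, so the \emph{identical} algebraic manipulation as in the proof of Theorem \ref{jimpliesk} — namely $\DF = J_{[K_M]} + (\text{relative canonical term})$ and then bounding each piece using nefness of $\gamma[\omega]-[K_M]$ and \cite[Lemma 3.17]{Dervan2014} (which, as noted in \cite{DR2016}, has a K\"ahler analogue) — goes through verbatim, with the smoothness of $M$ replacing the klt hypothesis. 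I would present the proof in this second form, as a two-line reduction: apply Corollary \ref{kahlerJ} together with the K\"ahler analogue of \cite[Lemma 3.17]{Dervan2014} to the decomposition of $\DF$, exactly as in Theorem \ref{jimpliesk}.
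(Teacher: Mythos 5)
Your first, analytic sketch is essentially the paper's argument, only presented in a less streamlined way: the paper simply invokes \cite[Theorem 4.14]{DR2016} to identify $\DF(\mathcal{X},\Omega)$ with the limit slope of the Mabuchi K-energy along the ray $\phi_t$ induced by the test configuration, then uses Corollary \ref{coercivityMab} (whose proof already contains your splitting $K_\omega = H + \hat{J}_{\omega,\chi}$ together with the coercivity of the entropy from \cite[Theorem 7.13]{Ti2} and Theorem \ref{propernessK}) to bound the Mabuchi slope from below by a uniform positive multiple of the slope of $\mathcal{I}_\omega - J^{AYM}_\omega$, and concludes via the norm identity \eqref{min}. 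Your separate treatment of the $\hat J$-part through \cite[Theorem 6.4]{DR2016} and of the entropy part through its coercivity reproduces exactly these ingredients, and the passage from coercivity (with constants uniform in $\phi$) to a slope bound along the ray is the same limiting argument as in Corollary \ref{kahlerJ}; so that version of your proof is fine and matches the paper, including the $\gamma=0$ boundary case.

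The route you say you would actually write up, however, has a genuine gap. Transplanting the proof of Theorem \ref{jimpliesk} verbatim requires two algebro-geometric inputs that are not available here: the reduction to flag-ideal blow-ups (Proposition \ref{jflowblow-ups} and \cite[Corollary 3.11]{Odaka2} are statements about algebraic test configurations), and above all the lct-type estimate \cite[Lemma 3.17]{Dervan2014} bounding $(r\scL_1-E)^n.K_{\scB/M\times\pr^1}$ below by $\epsilon\Vert\cdot\Vert_m$. A K\"ahler test configuration in the sense of \cite{DR2016} need not arise from an ideal sheaf on $M\times\comp$, its central fibre need not be algebraic, and a K\"ahler analogue of that lemma is not something you can cite as established "as noted in \cite{DR2016}" without proof; this is precisely why the paper stresses that the proofs in Section \ref{nonintegralsect} are completely different from the projective ones and proceeds analytically. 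So if you present the proof in your "second form" you are assuming exactly the missing ingredient; the correct write-up is your first form, i.e.\ the slope formula of \cite[Theorem 4.14]{DR2016} combined with Corollary \ref{coercivityMab} and \eqref{min}.
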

\begin{proof}
 We apply \cite[Theorem 4.14]{DR2016}. The Donaldson-Futaki invariant is given by the limit of the derivative of the Mabuchi K-energy along the 1-parameter family of biholomorphisms $\rho(t)$. Now, from Corollary \ref{coercivityMab}, the derivative of the Mabuchi K-energy is bounded from below by the derivative
 of the $\mathcal{I}_\omega - J^{AYM}_\omega$ functional (up to a multiplicative constant) and we can take the limit.
\end{proof}

As a direct consequence of this last theorem, we see that any K\"ahler class of a (non projective) Calabi-Yau is uniformly K-stable. This last fact can be proved using  the technology of Dervan-Ross and Berman-Darvas-Lu, see \cite[Corollary 4.17]{DR2016}, but the point is that our approach does not involve any deep analytic result.

\bibliography{hdr.bib}

\bigskip 

\address{{\small
\noindent\begin{tabular}{rp{7.8cm}}
 {\sc Yoshinori Hashimoto} & {\sc Data4C's K.K., 5-2-32, Minamiazabu, Minato-ku, Tokyo, Japan.}\\
 & {\sc Aix Marseille Universit\'e, CNRS, Centrale Marseille, Institut de Math\'ematiques
de Marseille, UMR 7373, 13453 Marseille, France.}\\
{\it Email address: } &\email{hashimoto\_yoshinori@data4cs.co.jp}\\
                      &\email{yoshinori.hashimoto@univ-amu.fr}\\
                      &  \\
{\sc Julien Keller} &  {\sc Aix Marseille Universit\'e, CNRS, Centrale Marseille, Institut de Math\'ematiques
de Marseille, UMR 7373, 13453 Marseille, France.}\\
{\it Email address: }& \email{julien.keller@univ-amu.fr}\\
\end{tabular}}}

\end{document}